\documentclass{amsart}
\usepackage{amsfonts}
\usepackage{amsmath}
\usepackage{amssymb}
\usepackage{amsthm}

\makeatletter 
\def\@tocline#1#2#3#4#5#6#7{\relax
  \ifnum #1>\c@tocdepth 
  \else
    \par \addpenalty\@secpenalty\addvspace{#2}
    \begingroup \hyphenpenalty\@M
    \@ifempty{#4}{
      \@tempdima\csname r@tocindent\number#1\endcsname\relax
    }{
      \@tempdima#4\relax
    }
    \parindent\z@ \leftskip#3\relax \advance\leftskip\@tempdima\relax
    \rightskip\@pnumwidth plus4em \parfillskip-\@pnumwidth
    #5\leavevmode\hskip-\@tempdima
      \ifcase #1
       \or\or \hskip 1em \or \hskip 2em \else \hskip 3em \fi
      #6\nobreak\relax
    \dotfill\hbox to\@pnumwidth{\@tocpagenum{#7}}\par
    \nobreak
    \endgroup
  \fi}
\makeatother

\DeclareMathAlphabet{\mathpzc}{OT1}{pzc}{m}{it}
\newcommand{\hh}{\mathbb{H}}

\newcommand{\B}{{\mathbb B}}
\newcommand{\C}{{\mathbb C}}

\renewcommand{\H}{{\mathbb H}}
\renewcommand{\S}{{\mathbb S}}
\newcommand{\N}{{\mathbb N}}

\newcommand{\Z}{{\mathbb Z}}

\newcommand{\R}{{\mathbb R}}

\newcommand{\tensor}{\otimes}

\let\Realpart\Re
\renewcommand{\Re}{\mathop{\Realpart e}}

\let\Impart\Im
\renewcommand{\Im}{\mathop{\Impart m}}

\newcommand{\HC}{\H\tensor\C}

\newtheorem{theorem}{Theorem}[section]
\newtheorem*{theorem*}{Theorem}
\newtheorem{proposition}[theorem]{Proposition}
\newtheorem{corollary}[theorem]{Corollary}
\newtheorem{lemma}[theorem]{Lemma}
\theoremstyle{definition}
\newtheorem{definition}[theorem]{Definition}

\newtheorem{remark}[theorem]{Remark}

\newtheorem*{warning}{Warning}

\setcounter{secnumdepth}{3} 
\setcounter{tocdepth}{3}

\title{The Harmonicity of slice regular functions} 

\author[Cinzia Bisi]{Cinzia Bisi}
\address{Cinzia Bisi : Dipartimento di Matematica e Informatica, Universit\`a di Ferrara, via Machiavelli 30, I-44121 Ferrara, Italy}
\email{bsicnz@unife.it \newline
ORCID: 0000-0002-4973-1053}
\author[J\"org Winkelmann]{J\"org Winkelmann}
\address{J\"org Winkelmann : Lehrstuhl Analysis II, Fakult\"at f\"ur Mathematik, Ruhr-Universit\"at Bochum, 44780 Bochum, Germany}
\email{Joerg.Winkelmann@ruhr-uni-bochum.de\newline
  ORCID: 0000-0002-1781-5842
}

\begin{document}

\begin{abstract}
 In this article we investigate harmonicity, Laplacians, mean value theorems
  and related topics in the context of quaternionic analysis. 
We observe that a Mean Value Formula for slice regular functions holds true and it is a consequence of the well known Representation Formula for slice regular functions over $\mathbb{H}$. Motivated by this observation, we have constructed three order-two differential operators in the kernel of which slice regular functions are, answering positively to the question:
 is a slice regular function over $\mathbb{H}$ (analogous to an holomorphic function over $\mathbb{C}$)  ''harmonic" in some sense, i.e. is it in the kernel of some order-two differential operator over $\mathbb{H}$? \\
 Finally, some applications are deduced, such as a Poisson Formula for slice regular functions over $\mathbb{H}$ and a Jensen's Formula for semi-regular ones.
\end{abstract}

\subjclass{30G35}
\maketitle

\tableofcontents

\section{Introduction}
In \cite{gentilistruppa1} and \cite{gentilistruppa}, Gentili and Struppa gave the following definition of {\it slice regular function} over the quaternions:
\begin{definition}
  Let $\Omega$ be a domain in $\H$. A real differentiable function $f \colon \Omega \to \H $ is said to be slice regular if, $\forall \,\, I \in \S =\{ q \in \H , \, \Re(q)=0 \, : \, |q| =1 \}$,
  its restriction $f_{I}$ to the complex line 
  $\C_I= \mathbb{R} + \mathbb{R} I$ passing through the origin and containing $1$ and $I$ is holomorphic on $\Omega \cap \C_I$,
  which is equivalent to require that, $\forall \, I \, \in \S ,$
\begin{equation}\label{defdbar}
\overline{\partial}_I f (x+yI) := \frac{1}{2} ( \frac{\partial}{\partial x} + I \frac{\partial}{\partial y})f_I (x+yI)=0
\end{equation}
on $\Omega \cap \C_I.$
\end{definition}
Later the notion of ``slice regularity'' was generalized to algebras
other than $\H$ (\cite{CSS09,ghiloniperotti}).

For simplicity, sometimes slice regukar functions are simply
called ``regular functions''.

  
  Let $D\subset\mathbb{C}$ be any symmetric set
  with respect to the real axis.
  A function $F=F_{1}+ F_{2}\imath:D\rightarrow \H\tensor{\mathbb{C}}$
  such that $F(\overline z)=\overline{F(z)}$ is said to be a
  \textit{stem function}.

  Let $\Omega_D=\{\alpha+\beta I:\alpha,\beta\in\R, I\in\S,
  \alpha+\beta i\in D\}$.
  
  A function $f:\Omega_{D}\rightarrow \H$ is said to be a \textit{(left) slice
function} if it is induced by a stem function $F=F_{1}+F_2\imath $ defined on $D$ 
in the following way: for any $\alpha+I\beta\in\Omega_{D}$,
\begin{equation*}
f(\alpha+I\beta)=F_{1}(\alpha+i\beta)+I F_{2}(\alpha+i\beta).
\end{equation*}

\noindent
If a stem function $F$ induces the slice function $f$, we will write
$f=\mathcal{I}(F)$. 

\begin{proposition}
  Let $D$ be a symmetric domain in $\C$ which intersects $\R$
  and let $\Omega_D\subset\H$ be defined as above.

  Then a {\em slice function} $f:\Omega_D\to \H$ is slice
  regular if and only
  if its stem function $F:D\to \H\tensor\C$ is holomorphic.
\end{proposition}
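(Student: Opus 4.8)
The plan is to reduce both conditions — slice regularity of $f$ and holomorphy of $F$ — to one and the same pair of real (quaternion-valued) partial differential equations, namely the Cauchy--Riemann system for the components $F_1,F_2$, and then to read off the equivalence. Throughout I would write $z=\alpha+\imath\beta\in D$ with $\imath$ the imaginary unit of $\C$, so that the partial derivatives $\partial_\alpha F_j,\partial_\beta F_j$ (for $j=1,2$) are $\H$-valued and, crucially, do \emph{not} depend on the choice of $I\in\S$.

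First I would compute $\overline{\partial}_I f$. For fixed $I\in\S$ the restriction $f_I$ is, by the very definition of the induced slice function, $f_I(\alpha+I\beta)=F_1(\alpha+\imath\beta)+I\,F_2(\alpha+\imath\beta)$, a function of the two real variables $\alpha,\beta$. Differentiating termwise and using $I^2=-1$ gives
\begin{equation*}
\overline{\partial}_I f(\alpha+I\beta)=\frac12\big(\partial_\alpha f_I+I\,\partial_\beta f_I\big)=\frac12\Big((\partial_\alpha F_1-\partial_\beta F_2)+I\,(\partial_\alpha F_2+\partial_\beta F_1)\Big).
\end{equation*}
Setting $A:=\partial_\alpha F_1-\partial_\beta F_2$ and $B:=\partial_\alpha F_2+\partial_\beta F_1$, both independent of $I$, we have $\overline{\partial}_I f=\tfrac12(A+IB)$. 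Next I would identify $A,B$ with the holomorphy defect of $F$: writing $F=F_1+F_2\imath$ and applying $\partial_{\bar z}=\frac12(\partial_\alpha+\imath\,\partial_\beta)$, using that $\imath$ commutes with $\H$ inside $\HC$ while $\imath^2=-1$, one gets $2\,\partial_{\bar z}F=A+B\imath$. Since $\HC=\H\oplus\H\imath$ as a real vector space, $F$ is holomorphic exactly when $A=0$ and $B=0$.

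It remains to match this with slice regularity, and here lies the single point that needs a small argument rather than bare computation: for one fixed $I$ the equation $A+IB=0$ does \emph{not} force $A=B=0$. But slice regularity demands $\overline{\partial}_I f=0$ for \emph{every} $I\in\S$. Evaluating $A+IB=0$ at $I$ and at $-I$ and adding yields $A=0$, whence $IB=0$ forces $B=0$ because $I$ is invertible; conversely $A=B=0$ trivially gives $\overline{\partial}_I f=0$ for all $I$. Hence $f$ is slice regular $\iff A\equiv B\equiv 0 \iff F$ is holomorphic.

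I expect the main (though mild) obstacle to be the bookkeeping at points of the real axis. There $\Omega_D$ collapses — the point $\alpha+I\beta$ is independent of $I$ when $\beta=0$ — and the stem condition $F(\overline z)=\overline{F(z)}$ forces $F_2\equiv 0$ on $D\cap\R$, which is exactly what makes $f$ single-valued and real-differentiable across $\R$. One should therefore check that the computation above extends to $D\cap\R$, so that the equivalence holds on all of $\Omega_D$; this uses that $D$ is a domain meeting $\R$ and that holomorphy of $F$ supplies the smoothness needed to write $\overline{\partial}_I f$ in the first place. A parallel remark secures the converse direction: if $f$ is slice regular one recovers $F_1,F_2$ as even/odd combinations $F_1=\tfrac12(f(\alpha+I\beta)+f(\alpha-I\beta))$ and $IF_2=\tfrac12(f(\alpha+I\beta)-f(\alpha-I\beta))$, which are real-differentiable, legitimizing the passage from $f$ to $F$.
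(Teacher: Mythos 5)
Your argument is correct and is essentially the standard one: the identity $\overline{\partial}_I f=\tfrac12(A+IB)$ is precisely the paper's Cullen-derivative formula $(\bar\partial_I f)(q)=\mathcal{I}\left(\partial F/\partial\bar z\right)(q)$, and the $\pm I$ substitution (or any two units $I\ne\pm J$) correctly extracts $A\equiv B\equiv 0$ from the vanishing of $\overline{\partial}_I f$ for \emph{all} $I$, which is where a single-slice argument would fail. The paper itself offers no proof, only a citation to Proposition~8 of Ghiloni--Perotti, and your computation is the argument behind that citation; your flagging of the boundary bookkeeping on $D\cap\R$ (oddness of $F_2$, recovering $F_1,F_2$ from $f$ to justify differentiating $F$) is exactly the right point to check and is handled adequately.
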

(See Proposition~8 of \cite{ghiloniperotti}.)

These notions have been studied a lot in the last years:
see, for example, the many results for slice regular functions from the unit ball of $\H $ to itself: \cite{bisigentili}, \cite{bisigentilitrends}, \cite{bs12}, \cite{bisistoppato}, \cite{bs17} and for entire slice regular functions \cite{BW}.\\

Classically, mean value theorems are closely related to harmonicity.
We investigate mean value properties for quaternionic functions.
We prove (Proposition~\ref{gen-mean}) that a slice regular function $f$
fulfills
\[
    f(a+bI)
    =\frac{1}{2\pi}
  \int_\S  \int_0^{2\pi}
  (1-IJ)f(a+bJ+re^{J\theta})
  d\theta
  d\mu(J), \ \forall a,b\in\R, I\in\S
  \]
  where $\mu$ is a probability measure on $\S$ which is
  invariant under the involution $J\mapsto -J$.

  Conversely, we show that every continuous function $f:\H\to\H$ with
  this mean value property must be the sum of a regular and an
  anti-regular function (Theorem~\ref{char-harm}).

  We also show that for any point $p\in\H$ and every $3$-sphere $S$
  containing $p$ in the interior, there exists a $\H$-valued measure
  on $S$ such that $f(p)=\int_S f(q)d\mu(q)$ for every slice regular function
  $f$ (Theorem~\ref{H-measure}).

Over the field of complex numbers, the mean value property is equivalent to harmonicity.
Therefore it is natural ask ourselves if slice regular functions were in the kernel of some order-two differential operator over $\H $: in section 8 we answer positively to this question
constructing three order-two differential operators in the kernel of which slice regular functions are.

The first one is $\Delta_*$, introduced in
Definition~\ref{def-laplace-star}.
For slice functions it is defined everywhere, for other functions
only outside $\R$.
On each slice $\C_I$ the operator $\Delta_*$ acts as the complex
Laplacian if we identify $\C_I\simeq\C$ and $\H=\C_I\oplus J\C_I
\simeq\C^2$ (with $J$ being an imaginary unit orthogonal to $I$).
If $f$ is a slice function, then $\Delta_*(f)$ is again a slice function
and $\Delta_*(f)=\mathcal{I}(\Delta F)$
for $f=\mathcal{I}(F)$.

The second order-two differential operator is $\Delta'$:
\[
(\Delta'f)(q)=\left(\Delta_* \int_{\S}R_wfd\mu(w)\right) (q)
\]
Here $R_w$ is an averaging operator which we define based on rotations,
cf.~section \ref{sect-rot}.
We observe that $(\Delta' f)(q)=\frac{1}{2}\Delta_* (Tr(f))(q)=
\frac{1}{2}\Delta_* (f+f^c)(q)$. For the definition of $f^c$ see Definition 2.10. \\

The third order-two differential operator is $\Delta''$:
$$
(\Delta'' f)(q)=(\Delta_* N(f))(q)=(\Delta_* (f \cdot f^c)) (q)
$$

On one side $\Delta_*$ and $\Delta '$ are $\mathbb{R}-$linear operator, on the other hand $\Delta''$ is not a linear operator: but for $\Delta ''$ a sort of 
Leibnitz rule for $(f * g)$ holds true
(proposition~\ref{product-rule}).
For the definition of slice product, denoted with $*$, see Definition 2.7.\\

Our main results on  $\Delta_*$ and $\Delta'$ are the following :
\begin{theorem}[Theorem~\ref{harm-star}]
Let $f:\Omega_D\to\H$ be a $C^2$ slice function.
Assume that $D$ is simply-connected.

$\Delta_*f$ is vanishing identically if and only if $f$
can be written as a sum of a regular function $g$ and an
anti-regular function $h$.
\end{theorem}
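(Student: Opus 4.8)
The plan is to push everything down to the level of stem functions, where $\Delta_*$ becomes the ordinary componentwise complex Laplacian, and then to invoke the classical fact that a harmonic function on a simply-connected domain splits as holomorphic plus anti-holomorphic. First I would record the reduction. Since $f=\mathcal{I}(F)$ is a slice function, the definition of $\Delta_*$ gives $\Delta_* f=\mathcal{I}(\Delta F)$, and because $\mathcal{I}$ is injective on stem functions (on a domain meeting $\R$), the hypothesis $\Delta_* f\equiv 0$ is equivalent to $\Delta F\equiv 0$, where $\Delta=\partial_\alpha^2+\partial_\beta^2$ acts on the $\HC\cong\C^4$-valued map $F$ componentwise. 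By the cited Proposition, $g=\mathcal{I}(G)$ is regular iff $G$ is holomorphic, and by the analogous characterization $h=\mathcal{I}(H)$ is anti-regular iff $H$ is anti-holomorphic. Thus the theorem is equivalent to the following statement about an $\HC$-valued stem function $F$ on a simply-connected symmetric domain $D$: $\Delta F\equiv 0$ if and only if $F=G+H$ with $G$ a holomorphic stem function and $H$ an anti-holomorphic stem function.

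Next I would prove this complex-analytic statement, writing $\Delta=4\,\partial_z\partial_{\bar z}$. The converse is immediate: a holomorphic $G$ satisfies $\partial_{\bar z}G=0$ and an anti-holomorphic $H$ satisfies $\partial_z H=0$, so $\Delta(G+H)=4\partial_z\partial_{\bar z}G+4\partial_{\bar z}\partial_z H=0$. For the forward direction I set $\Phi:=\partial_z F$; harmonicity gives $\partial_{\bar z}\Phi=\tfrac14\Delta F=0$, so $\Phi$ is holomorphic. Because $D$ is simply-connected, $\Phi$ admits a holomorphic primitive $G_0$ with $\partial_z G_0=\Phi$ (built componentwise in $\C^4$ by path integration, using simple connectivity to make the integral path-independent). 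Then $\partial_z(F-G_0)=0$, so $H:=F-G_0$ is anti-holomorphic and $F=G_0+H$.

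The real point, and the step I expect to be the main obstacle, is to upgrade this raw decomposition to one by genuine stem functions, so that $G$ and $H$ actually induce slice functions. Let $\tau(F)(z):=\overline{F(\bar z)}$, where the bar on values is the conjugation of $\HC$ fixing $\H$ and sending $\imath\mapsto-\imath$; being a stem function means precisely $\tau(F)=F$. Since this conjugation is $\C$-antilinear and $z\mapsto\bar z$ is anti-holomorphic, a short computation shows $\tau\circ\partial_z=\partial_z\circ\tau$ and $\tau\circ\partial_{\bar z}=\partial_{\bar z}\circ\tau$; hence $\tau$ preserves holomorphy, anti-holomorphy and harmonicity. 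In particular $\Phi=\partial_z F$ is again $\tau$-fixed, so $\tau(G_0)$ is a second holomorphic primitive of $\Phi$, whence $\tau(G_0)-G_0$ equals a constant $c_0\in\HC$ on the connected set $D$. Applying $\tau$ once more and using $\tau^2=\mathrm{id}$ forces $c_0+\overline{c_0}=0$, so $c_0$ is purely imaginary; then $G:=G_0+\tfrac12 c_0$ satisfies $\tau(G)=G$ while still $\partial_z G=\Phi$ and $\partial_{\bar z}G=0$.

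Thus $G$ is a holomorphic stem function and $H:=F-G$ is an anti-holomorphic stem function, giving $f=\mathcal{I}(G)+\mathcal{I}(H)=g+h$ with $g$ regular and $h$ anti-regular. All the analytic content is classical; the only delicate bookkeeping is this conjugation-symmetrization, needed because the plain primitive $G_0$ need not respect the real-axis symmetry that defines stem functions.
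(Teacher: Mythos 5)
Your proposal is correct and follows essentially the same route as the paper: reduce to the stem function, split the harmonic $\HC$-valued map as holomorphic plus anti-holomorphic (using simple-connectivity), and then symmetrize by subtracting half of the constant $\overline{G(\bar z)}-G(z)$ to make the summands genuine stem functions. Your $\tau$-formalism and the explicit primitive construction only spell out steps the paper states more tersely; the key normalization argument is identical.
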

For the definition of anti-regular function see Remark 2.5.

\begin{proposition}[Proposition~\ref{real-part}]
Let $h:\H \to\mathbb{R}$ be a 
slice function with $\Delta'h=0$.
Then there exists a slice-preserving regular function $f$ such that $h=\Re  \, (f)$.
\end{proposition}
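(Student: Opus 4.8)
The plan is to translate everything into the stem function of $h$ and reduce to the classical fact that a harmonic function on a simply connected planar domain is the real part of a holomorphic function. Observe first that the domain of $h$ being all of $\H$ means $D=\C$, which is simply connected, so the only hypothesis one really needs is available for free.

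First I would pin down the shape of the stem function of a real-valued slice function. Write $h=\mathcal{I}(H)$ with $H=H_{1}+H_{2}\imath$ and $H_{1},H_{2}\colon D\to\H$, so that $h(\alpha+I\beta)=H_{1}(\alpha+i\beta)+I\,H_{2}(\alpha+i\beta)$. Requiring this to lie in $\R$ for every $I\in\S$ forces, by combining the values at $I$ and $-I$ and then letting $I$ range over $\S$, that $H_{2}\equiv0$ and that $H_{1}$ be real-valued; the stem symmetry $H(\bar z)=\overline{H(z)}$ then says precisely that $H_{1}$ is even in the imaginary variable. In particular $h^{c}=\mathcal{I}(\overline{H_{1}})=\mathcal{I}(H_{1})=h$, so $Tr(h)=h+h^{c}=2h$ and therefore
\[
\Delta'h=\frac{1}{2}\,\Delta_{*}(Tr(h))=\Delta_{*}h=\mathcal{I}(\Delta H_{1}).
\]
Hence the hypothesis $\Delta'h=0$ is equivalent to $\Delta H_{1}=0$, i.e. $H_{1}$ is a real harmonic function on $D$.

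Since $D=\C$ is simply connected, $H_{1}$ admits a harmonic conjugate $G\colon D\to\R$, determined up to an additive constant, such that $H_{1}+iG$ is holomorphic. The one point needing care is that the candidate stem function $F:=H_{1}+G\imath$ must obey the stem symmetry $F(\bar z)=\overline{F(z)}$, which here amounts to $G$ being \emph{odd} in the imaginary variable. I would verify this by checking that $z\mapsto-G(\bar z)$ is again a harmonic conjugate of $H_{1}$ — this uses exactly that $H_{1}$ is even — so it differs from $G$ by a constant; absorbing half of that constant into $G$ makes $G$ odd without disturbing the harmonic-conjugate relation. This symmetry bookkeeping is the main (and only mildly technical) obstacle; the analytic content is just the classical existence of the conjugate.

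Finally, with $G$ chosen odd, $F=H_{1}+G\imath\colon D\to\C$ is a holomorphic, $\C$-valued stem function, so $f:=\mathcal{I}(F)$ is a slice-preserving regular function. Because $H_{1}$ and $G$ are real-valued, on each slice $f(\alpha+I\beta)=H_{1}(\alpha+i\beta)+I\,G(\alpha+i\beta)$ has purely imaginary second summand, whence $\Re(f)=H_{1}=h$, which is exactly the assertion.
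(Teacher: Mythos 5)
Your proof is correct, but it follows a genuinely different route from the paper's. The paper invokes its Theorem~\ref{harm-star}(2): it first checks that a real-valued slice function is rotationally invariant and equivariant, so that $\Delta'h=\Delta_*h$ and $(\partial_*h)(x)\in\R$ on the real axis, and then applies that theorem to write $h=g+k$ with $g$ regular, $k$ anti-regular, both slice-preserving; the reality of $h$ forces $g(q)-\overline{k(q)}=\overline{g(q)}-k(q)$, whence both sides are constant and $h=2\Re(g)+\text{const}$. You instead bypass Theorem~\ref{harm-star} entirely: you identify the stem function of a real-valued slice function as a single real-valued, even-in-$y$ harmonic function $H_1$ on $\C$, produce a harmonic conjugate $G$ by the classical simply-connected argument, and normalize $G$ to be odd so that $H_1+G\imath$ is a genuine holomorphic stem function with values in $\R\otimes\C$ --- which by Proposition~\ref{slpreschar} induces a slice-preserving regular $f$ with $\Re(f)=h$. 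Your symmetry bookkeeping (that $-G(\bar z)$ is again a conjugate of $H_1$, so $G$ can be made odd by a constant shift) is exactly the point that needs care, and you handle it correctly; it is the direct analogue of the paper's normalization of $G$ and $H$ by the imaginary constant $c/2$ inside the proof of Theorem~\ref{harm-star}. What your approach buys is a shorter, self-contained argument resting only on the existence of harmonic conjugates and basic stem-function facts (including the identity $\Delta'h=\frac12\Delta_*(Tr(h))=\Delta_*h$, valid here since $h^c=h$); what the paper's buys is that the same general machinery (Theorem~\ref{harm-star}) is reused elsewhere, and its proof of this proposition is essentially a corollary. Both are sound.
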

\begin{proposition}[Proposition~\ref{isol-zero}]
Let $u \colon \H  \to \mathbb{R}$ be 
a $C^2$-function such that $\Delta' u=0$ outside $\R$. 

Then $u$ admits no isolated zero in any real point $a\in\R$.
\end{proposition}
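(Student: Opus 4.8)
The plan is to reduce the four–dimensional problem to a two–dimensional one by passing to the spherical average of $u$ and then invoking the minimum principle for harmonic functions in the plane.

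First I would set $g=\int_\S R_w u\, d\mu(w)$, so that by the very definition of $\Delta'$ one has $\Delta' u=\Delta_* g$. Since $u$ is real-valued and each $R_w$ is built from rotations of $\H$ fixing $\R$ (integrated against the \emph{probability} measure $\mu$), the function $g$ is a real-valued slice function, it is $C^2$ (differentiate under the integral sign, using $u\in C^2(\H)$ and compactness of $\S$), and $g(q)$ is an average with positive total mass of the values of $u$ on the two-sphere $S_q=\{q'\in\H:\Re(q')=\Re(q),\ |\Im(q')|=|\Im(q)|\}$. In particular $g$ agrees with $u$ on $\R$ and $g(q)>0$ whenever $u>0$ on all of $S_q$. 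Being a real-valued slice function, $g$ is independent of the imaginary unit and even in the imaginary modulus: $g(a+bI)=\phi(a,b)$ for a single $\phi\colon\R^2\to\R$ with $\phi(a,-b)=\phi(a,b)$. Since $\Delta_*$ acts on each slice as the planar Laplacian, $(\Delta'u)(a+bI)=\phi_{aa}(a,b)+\phi_{bb}(a,b)$ up to a fixed positive constant.

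Next I would show that $\phi$ is harmonic on all of $\R^2$. The hypothesis $\Delta' u=0$ outside $\R$ gives $\phi_{aa}+\phi_{bb}=0$ for $b\neq 0$. As $g\in C^2(\H)$ forces $\phi\in C^2(\R^2)$, the function $\phi_{aa}+\phi_{bb}$ is continuous; vanishing off the line $b=0$, it must vanish there as well. Hence $\phi$ is harmonic on $\R^2$, with no singular contribution across the real axis.

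Finally I would argue by contradiction. Suppose $a_0\in\R$ were an isolated zero of $u$, so $u\neq 0$ on a punctured ball $B(a_0,\varepsilon)\setminus\{a_0\}$; this set is connected (dimension four) and $u$ is continuous, so $u$ has constant sign there, say $u>0$ (else replace $u$ by $-u$). Using orthogonality of real and imaginary parts, every $q'\in S_{a+bI}$ satisfies $|q'-a_0|=\sqrt{(a-a_0)^2+b^2}$, i.e. the distance in $\H$ from the sphere to $a_0$ equals the planar distance from $(a,b)$ to $(a_0,0)$. Thus for $(a,b)\neq(a_0,0)$ with $\sqrt{(a-a_0)^2+b^2}<\varepsilon$ the whole sphere $S_{a+bI}$ lies in $B(a_0,\varepsilon)\setminus\{a_0\}$, so $\phi(a,b)=g(a+bI)>0$, whereas $\phi(a_0,0)=g(a_0)=u(a_0)=0$. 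Hence $\phi\geq 0$ near $(a_0,0)$ with an interior minimum there, forcing the non-constant harmonic $\phi$ to be constant near $(a_0,0)$ by the minimum principle — contrary to $\phi>0$ nearby. Therefore $u$ admits no isolated zero at a real point. I expect the main obstacle to be the first step: extracting from the construction of $R_w$ and $\mu$ in section~\ref{sect-rot} that $g$ is genuinely a real-valued $C^2$ slice function realized as a \emph{positive} spherical average of $u$, so that $\Delta' u$ becomes precisely the planar Laplacian of the axially symmetric profile $\phi$; once this identification is secured, the $C^2$-extension across $\R$ and the planar minimum principle are routine.
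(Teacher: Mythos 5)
Your argument follows the paper's own proof almost step for step: average $u$ over the rotations $R_w$ to get $g=\int_{\S}R_wu\,d\mu(w)$, note that $g$ vanishes at the putative isolated real zero $a_0$ but is strictly positive on a punctured neighbourhood (since each value $g(q)$ is a positive average of values of $u$ on the sphere $\S_q\subset W\setminus\{a_0\}$), restrict to a slice where $\Delta_*$ is the planar Laplacian, extend harmonicity across $\R$ by continuity of the second derivatives, and contradict the minimum principle. One intermediate claim, which you single out as the crux, is however not justified and is in fact false in general: for an arbitrary real-valued $C^2$ function $u$ the average $g$ need \emph{not} be a slice function. The maps $S_w$ with $w\in\S$ are only the half-turns of the imaginary $3$-space, and the pushforward of $\mu$ under $w\mapsto S_w$ is not the Haar measure of $SO(3)$; for instance, for $u(q)=x_1^2$ one computes $g(q)=\tfrac{7}{15}x_1^2+\tfrac{4}{15}(x_2^2+x_3^2)$, which distinguishes imaginary directions, so $g$ is not rotationally invariant and hence not a slice function. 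Fortunately this does not damage your proof: neither sliceness nor the evenness of $\phi$ in $b$ is ever used. It suffices to fix a single $I\in\S$ and let $\phi$ be the restriction of $g$ to $\C_I$ (which is $C^2$, harmonic off $\R$, hence harmonic, and has a strict interior minimum at $a_0$) --- and this is exactly what the paper does with its $\tilde u$ restricted to one slice.
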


Finally, we provide a Jensen's formula.

\begin{proposition}[Jensen's Formula; Proposition \ref{jensen}]
Let $\Omega=\Omega_D$ be a circular
domain of $\H $ and 
let $f:\Omega\rightarrow\H \cup\{ \infty \}$ be a
semi-regular function.
Let $\rho>0$ be such that the ball, centered in 0 and of radius $\rho,$ 
$\overline{\mathbb{B}_{\rho}}\subset\Omega$, $f(0)\neq 0,\infty$ 
and such that $f(y)\neq 0, \infty$, for any $y\in\partial \mathbb{B}_{\rho}$.
Let $\mu$ be a probability measure on $\S$.

Then: \\
\begin{equation}
\begin{array}{ccc}
\log|f(0)| 
&\le 
\displaystyle\frac{1}{2\pi \mu(\S)}\displaystyle \int_0^{2\pi} \int_{\S }\log|f(\rho\cos\theta +\rho\sin\theta I)|d\mu(I)d\theta \, + \\
&\\
& \displaystyle-\sum_{|p_k|<\rho} m_k\log \frac{\rho}{|p_k|}
\quad 
 \\
\end{array}
\end{equation}
for $div(f)=\sum m_k\{p_k\}$.
\end{proposition}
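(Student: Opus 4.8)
The plan is to establish the estimate on a single slice $\C_I$ and then integrate the resulting family of inequalities against $\mu$; since $\log|f(0)|$ and the divisor term $\sum_k m_k\log(\rho/|p_k|)$ do not depend on $I$, the only effect of $\mu$ is to turn a boundary integral over one slice into the double integral in the statement, so any probability measure $\mu$ will do. The device that makes each slice tractable is the normal (symmetrized) function $N(f)=f* f^c$. Because $N(f)$ is slice preserving, its restriction to $\C_I\simeq\C$ is a genuine scalar meromorphic function of one complex variable, and on $\C_I$ its divisor is precisely the spherical divisor of $f$: a sphere $p_k$ of radius $|p_k|$ and multiplicity $m_k$ meets $\C_I$ in the conjugate pair $\{\alpha_k\pm\beta_k I\}$, at each of which $N(f)$ has a zero (or, for semi-regular $f$, a pole) of order $m_k$. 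I would also record the elementary fact that at the real point $0$ one has $N(f)(0)=f(0)f^c(0)=|f(0)|^2$.

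First I would apply the classical Jensen formula of one complex variable to the meromorphic function $N(f)|_{\C_I}$ on the disc $\mathbb{B}_\rho\cap\C_I$. Using $N(f)(0)=|f(0)|^2$ and the count of zeros and poles above (each sphere contributing its pair of points, both at radius $|p_k|$), this yields the exact identity
\[
2\log|f(0)|=\frac{1}{2\pi}\int_0^{2\pi}\log\bigl|N(f)(\rho e^{I\theta})\bigr|\,d\theta-2\sum_{|p_k|<\rho}m_k\log\frac{\rho}{|p_k|},
\]
valid for every $I\in\S$, poles being carried with the opposite sign (negative $m_k$). The hypotheses $f(0)\neq0,\infty$ and $f\neq0,\infty$ on $\partial\mathbb{B}_\rho$ guarantee that all quantities here are finite.

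The heart of the argument is the comparison between $\log|N(f)|$ and $2\log|f|$ on the boundary circle, and this is the step I expect to be the main obstacle, because the naive pointwise bound $|N(f)(q)|\le|f(q)|^2$ is \emph{false}. What is true — and what I would prove by a Cauchy–Schwarz estimate on the cross term — is the pointwise inequality
\[
\bigl|N(f)(\alpha+\beta I)\bigr|\le\bigl|f(\alpha+\beta I)\bigr|\,\bigl|f(\alpha-\beta I)\bigr|,
\]
relating the value at $q=\alpha+\beta I$ to the value at its slice-conjugate $q'=\alpha-\beta I$. Indeed, writing $f=\mathcal{I}(F)$ with $F=F_1+\imath F_2$, one has $|f(\alpha\pm\beta I)|^2=|F_1|^2+|F_2|^2\pm2\Re(\overline{F_1}IF_2)$ and $|N(f)(\alpha+\beta I)|^2=(|F_1|^2-|F_2|^2)^2+4\Re(F_1\overline{F_2})^2$, whence the difference of the squares of the two sides equals $4\bigl(|\Im(F_1\overline{F_2})|^2-\langle I,\Im(F_1\overline{F_2})\rangle^2\bigr)\ge0$ by Cauchy–Schwarz. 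On the boundary circle the slice-conjugate of $\rho e^{I\theta}$ is exactly $\rho e^{-I\theta}$; hence, integrating the logarithm of the displayed inequality in $\theta$ and folding the two resulting integrals by $\theta\mapsto-\theta$,
\[
\frac{1}{2\pi}\int_0^{2\pi}\log\bigl|N(f)(\rho e^{I\theta})\bigr|\,d\theta\le\frac{2}{2\pi}\int_0^{2\pi}\log\bigl|f(\rho e^{I\theta})\bigr|\,d\theta .
\]

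Combining this with the Jensen identity of the second step and dividing by $2$ gives the slice inequality
\[
\log|f(0)|\le\frac{1}{2\pi}\int_0^{2\pi}\log\bigl|f(\rho e^{I\theta})\bigr|\,d\theta-\sum_{|p_k|<\rho}m_k\log\frac{\rho}{|p_k|},
\]
now valid for \emph{every} $I\in\S$. Finally I would integrate this over $\S$ against $d\mu/\mu(\S)$: the left-hand side and the divisor sum are constant in $I$, while the boundary integral becomes $\frac{1}{2\pi\mu(\S)}\int_0^{2\pi}\int_\S\log|f(\rho e^{I\theta})|\,d\mu(I)\,d\theta$, which is exactly the asserted bound. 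The only bookkeeping to watch is that the orders $m_k$ read off from $N(f)$ agree with the multiplicities in $\mathrm{div}(f)$ and that poles are carried with the correct sign throughout; I note also that equality holds whenever $f$ is slice preserving, since then the Cauchy–Schwarz step is an equality.
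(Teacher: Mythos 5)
Your argument is correct, but it follows a genuinely different route from the paper. The paper first proves the zero--pole-free case (Proposition~\ref{jensen-ni-ni}) via the Splitting Lemma and subharmonicity of $\log\left(|g|^2+|h|^2\right)$, then handles the general case by factoring $f=f_0*B_1*\ldots*B_r$ into a zero-free part and $\rho$-Blaschke factors (Proposition~\ref{factorization}), using that each $B_i$ has modulus $1$ on $\partial\B_\rho$ (so that $|f|=|f_0|$ there) and that $\log|B_i(0)|=-\epsilon_i\log(\rho/|p_i|)$. You instead pass to the symmetrization $N(f)=f*f^c$, apply the scalar one-variable Jensen identity to its restriction to each slice (exploiting $N(f)(0)=|f(0)|^2$ and $div(f^s)=2\,div(f)$, which on $\C_I$ places a zero or pole of order $m_k$ at each of $\alpha_k\pm\beta_k I$ — with the understanding that for real $p_k$ the pair degenerates to one point of order $2m_k$, giving the same total), and then convert $\log|N(f)|$ into $2\log|f|$ on the boundary circle through the pointwise estimate $|N(f)(\alpha+\beta I)|\le|f(\alpha+\beta I)|\,|f(\alpha-\beta I)|$, whose proof by Cauchy--Schwarz on $\Im(F_1F_2^c)$ is correct. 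In effect you start from the exact equality that the paper only records afterwards as a remark about $f^s$, and deduce the inequality for $f$ from it. What the paper's route buys is the structural factorization (reused for the zero-counting corollaries) and a transparent equality case via Proposition~\ref{jensen-ni-ni}; what your route buys is economy — no Blaschke machinery, no boundary-modulus lemma for $*$-products — and an explicit identification of the defect in the inequality as $4\left(|\Im(F_1F_2^c)|^2-\langle \Im(F_1F_2^c),I\rangle^2\right)\ge 0$, which vanishes precisely in the slice-preserving case. Both proofs rest on the same hypotheses and the same divisor bookkeeping, and yours is complete as sketched.
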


\vskip0.3cm
We hope that this paper can provide new ideas for studying slice regular functions and their "harmonic properties'' on slice regular
quaternionic manifolds recently introduced by Bisi-Gentili in \cite{BG} and Angella-Bisi in \cite{AnB}.

\section{Prerequisites about quaternionic functions}

In this section we will overview the main notions and results needed
for our aims. 
First of all, let us denote by $\mathbb	{H}$ the real algebra
of quaternions. An element $x\in\H $ is usually written as 
$x=x_{0}+ix_{1}+jx_{2}+kx_{3}$, where $i^{2}=j^{2}=k^{2}=-1$ and $ijk=-1$.
Given a quaternion $x$ we introduce a conjugation in $\H $ (the usual one),
as $x^{c}=x_{0}-ix_{1}-jx_{2}-kx_{3}$; with this conjugation we define the real
part of $x$ as $\Re(x):=(x+x^{c})/2$ and the imaginary part as $\Im(x):=(x-x^{c})/2$.
With the notion of conjugation just defined
we can write the euclidean square norm of a
quaternion $x$ as $|x|^{2}=xx^{c}$. 
The subalgebra of real numbers will be identified, of course, 
with the set
$\mathbb{R}:=\{x\in\H \,\mid\,\Im(x)=0\}.$

Now, if $x\in\H \setminus \mathbb{R}$ is such that $\Re (x)=0$, 
then the imaginary part of $x$ is such that 
$(\Im(x)/|\Im(x)|)^{2}=-1$. More precisely, any imaginary quaternion 
$I=ix_{1}+jx_{2}+kx_{3}$, such 
that $x_{1}^{2}+x_{2}^{2}+x_{3}^{2}=1$ is an imaginary unit. The set of imaginary units 
is then a real $2-$sphere and it will be conveniently denoted as follows:
\begin{equation*}
\S :=\{x\in\H \,\mid\, x^{2}=-1\}=\{x\in\H \,\mid\, 
\Re(x)=0, \, |x|=1\}.
\end{equation*}

With the previous notation, any $x\in\H $ can be written as $x=\alpha+I\beta$,
where $\alpha,\beta\in\mathbb{R}$ and $I\in\S $. 
Given any $I\in\S $ we will denote the real subspace of $\H $ 
generated by $1$ and $I$ as:
\begin{equation*}
\mathbb{C}_{I}:=\{x\in\H \,\mid\,x=\alpha+I\beta, \alpha,\beta\in\mathbb{R}\}.
\end{equation*}
Sets of the previous kind will be called \textit{slices}.

We denote the $2-$sphere with center
$\alpha\in\mathbb{R}$ and radius $|\beta|$ (passing through $\alpha+I\beta\in\H $), as:
\begin{equation*}
\S _{\alpha+I\beta}:=\{x\in\H \,\mid\,x=\alpha+J\beta, J\in\S \}.
\end{equation*}
Obviously, if $\beta=0$, then $\S _{\alpha}=\{\alpha\}$.

\subsection{Slice functions and regularity}
In this part we will recall the main definitions and features of slice functions. 
The theory of slice functions was introduced in \cite{ghiloniperotti} as a tool to generalize
the theory of quaternionic regular functions defined on particular domains introduced in \cite{gentilistruppa1, gentilistruppa},
to more general domains and to 
alternative $*-$algebras. 

\noindent
The complexification of $\H $ is defined to be the real tensor product 
between $\H $ itself and $\mathbb{C}$: 
\begin{equation*}
\H _{\mathbb{C}}:=\H \otimes_{\mathbb{R}}\mathbb{C}:=\{p+q\imath \,\mid\, p,q\in\H \}.
\end{equation*}
(Here $\imath=1 \tensor i$.)
Note that $\HC $ has a natural structure of an associative
algebra induced by the algebra structures of $\H$ and $\C$.
Explicitly, the product on $\HC$ is given as follows: 
if $p_{1}+ q_{1}\imath , p_{2}+ q_{2}\imath$ belong to $\HC $, 
then,
\begin{equation*}
(p_{1}+ q_{1}\imath)(p_{2}+ q_{2}\imath)=p_{1}p_{2}-q_{1}q_{2}+(p_{1}q_{2}+q_{1}p_{2})\imath.
\end{equation*}
The usual complex conjugation $\overline{p+q\imath }=p-q\imath $ commutes with
the following involution (the quaternionic conjugation)
$(p+q\imath )^{c}=p^{c}+ q^{c}\imath$.

We introduce now the class of subsets of $\H $ where our functions will be defined.
\begin{definition}\label{def-circular}
  Given any set $D\subseteq\mathbb{C}$,
  we define its \textit{circularization} as the 
subset in $\H $ defined as follows:
\begin{equation*}
  \Omega_{D}:=\{\alpha+I\beta\,\mid\,\alpha,\beta\in\R,
  \alpha+i\beta\in D, I\in\S \}.
\end{equation*}
Such subsets of $\H $ are called \textit{circular} sets. 
If $D\subset \mathbb{C}$ is 
an open connected subset such that $D\cap\mathbb{R}\neq\emptyset$, 
then $\Omega_{D}$ 
(which is again open and connected and intersects the real line $\R$)
is called a \textit{slice domain} 
(see \cite{genstostru}). 
\end{definition}

Note that for any subset $D\subset \C$ the circularization
$\Omega_D$ coincides with the circularization $\Omega_{D^s}$ of the
symmetrized domain $D^s=\{z:z\in D\text{ or }
\bar z\in D\}$.

From now on, $\Omega_{D}\subset\H $ will always denote a circular  domain arising as circularization of a symmetric domain $D\subset\C$.

\begin{definition}
Let $D\subset\mathbb{C}$ be any symmetric set with respect to the real axis. A function $F=F_{1}+F_2\imath :D\rightarrow \HC $ such that $F(\overline z)=\overline{F(z)}$ is said to be a \textit{stem function}. 

\noindent
A function $f:\Omega_{D}\rightarrow\H $ is said to be a \textit{(left) slice
function} if it is induced by a stem function $F=F_{1}+F_2\imath $ defined on $D$ 
in the following way: 
\begin{equation}\label{eq-stem}
f(\alpha+I\beta)=F_{1}(\alpha+i\beta)+I F_{2}(\alpha+i\beta).
\end{equation}
for all $\alpha+i\beta\in D$ and
all $I\in \S$.
\noindent
If a stem function $F$ induces the slice function $f$, we will write $f=\mathcal{I}(F)$. 
The set of slice functions defined on a certain circular domain $\Omega_{D}$ will
be denoted by $\mathcal{S}(\Omega_{D})$.
\end{definition}

\begin{lemma}\label{stem-basic}
Let $f:\Omega_D\to\H$ be a function defined on a circular
domain $\Omega_D$. If there exists a function $F=F_1+F_2\imath: D\to\HC$
such that
equation~\eqref{eq-stem} holds, then
$F$ is a stem function, i.e., $F_1(z)=F_1(\bar z)$
and $F_2( z)=-F_2(\bar z)$.
\end{lemma}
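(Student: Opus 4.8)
The plan is to exploit the redundancy in the parametrization $q=\alpha+I\beta$ of the points of $\Omega_D$ together with the fact that \eqref{eq-stem} is assumed to hold for \emph{every} $I\in\S$ at once. The content of the hypothesis is not merely that $f$ admits some expression of the shape \eqref{eq-stem}, but that a single pair $(F_1,F_2)$ works simultaneously for all imaginary units; this rigidity, combined with $\H$ being a division algebra, is what will force the symmetry relations. Recall that here $F_1,F_2\colon D\to\H$ are $\H$-valued, so the division-algebra cancellation may be applied directly.

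First I would fix $z=\alpha+i\beta\in D$ with $\beta\neq0$ and note that, by symmetry of $D$, also $\bar z=\alpha-i\beta\in D$. For any $J\in\S$ the same quaternion $q=\alpha+J\beta$ can be presented in two ways compatible with \eqref{eq-stem}: as $\alpha+J\beta$, attached to the complex point $z$ and the unit $J$, and as $\alpha+(-J)(-\beta)$, attached to $\bar z$ and the unit $-J$. Evaluating \eqref{eq-stem} through both descriptions and equating gives
\[
F_1(z)+J\,F_2(z)=F_1(\bar z)-J\,F_2(\bar z)\qquad\text{for every }J\in\S,
\]
so that $F_1(z)-F_1(\bar z)=-J\bigl(F_2(z)+F_2(\bar z)\bigr)$ with a left-hand side independent of $J$. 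Choosing two distinct units $J_1\neq J_2$ and subtracting yields $(J_2-J_1)\bigl(F_2(z)+F_2(\bar z)\bigr)=0$; since $J_2-J_1\neq0$ and $\H$ is a division algebra, we conclude $F_2(\bar z)=-F_2(z)$, and then the displayed identity collapses to $F_1(z)=F_1(\bar z)$.

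It remains to treat a real point $z=\alpha$, where $\beta=0$ and $z=\bar z$. Here \eqref{eq-stem} reads $f(\alpha)=F_1(\alpha)+J\,F_2(\alpha)$ for all $J\in\S$; since the left side is independent of $J$, the same cancellation applied to two distinct units forces $F_2(\alpha)=0$, which is exactly the required $F_2(\alpha)=-F_2(\alpha)$, while $F_1(\alpha)=F_1(\bar z)$ is trivial. I do not expect a genuine obstacle in this argument; the only point demanding care is that no continuity or differentiability of $F$ is assumed, so the real points cannot be reached by a limiting process from the non-real slices and must be handled by the direct argument just described.
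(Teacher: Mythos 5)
Your proposal is correct and follows essentially the same route as the paper: both exploit the double parametrization $\alpha+J\beta=\alpha+(-J)(-\beta)$ to obtain $F_1(z)+JF_2(z)=F_1(\bar z)-JF_2(\bar z)$ for all $J\in\S$. The paper leaves the final cancellation implicit ("implying the statement"), whereas you spell it out via two distinct units and the absence of zero divisors in $\H$, including the real-point case; this adds detail but no new idea.
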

\begin{proof}
We observe that for all $z=\alpha+i \beta\in D$ and all $I\in\S$
we have
\[
f(\alpha+I\beta)=F_{1}(\alpha+i\beta)+I F_{2}(\alpha+i\beta)
\]
and
\[
f(\alpha+(-I)(-\beta))=F_{1}(\alpha-i\beta)-I F_{2}(\alpha-i\beta),
\]
implying the statement.
\end{proof}

Examples of (left) slice functions are polynomials and 
functions given by power series in the variable $x\in\H $ with all coefficients on the right, i.e., a power series
\begin{equation*}
\sum_{k=0}^{+ \infty} x^{k}a_{k},\quad \{a_{k}\}\subset\H ,
\end{equation*}
if convergent, defines a slice function.

A function $f:\Omega_D\to\H$ is a {\em slice function}
if and only if it obeys the following {``representation formula''}:
\[
f(x+yJ)=\frac{1-JI}2f(x+yI)+ \frac{1+JI}2f(x-yI)\,\, \forall x,y\in\R,  \,\, \forall
I,J\in\S
\]
(see \cite{genstostru, ghiloniperotti}).

\subsubsection{Regularity}

Let now $D\subset \mathbb{C}$ be an open set and $z=\alpha+i\beta\in D$. Given a stem function $F=F_1+F_2\imath :D\rightarrow \hh_\mathbb{C}$
of class $C^1$, then
\begin{equation*}
 \frac{\partial F}{\partial z},\frac{\partial F}{\partial\bar{z}}:D\rightarrow\hh_\mathbb{C}\simeq\C^4,
\end{equation*}
are defined as usual, i.e., 
\begin{equation*}
 \frac{\partial F}{\partial z}=\frac{1}{2}\left(\frac{\partial F}{\partial \alpha}-\imath \frac{\partial F}{\partial \beta}\right)\quad \mbox{ and }\quad \frac{\partial F}{\partial \bar{z}}=\frac{1}{2}\left(\frac{\partial F}{\partial \alpha}+\imath \frac{\partial F}{\partial \beta}\right).
 \end{equation*}
They are again stem functions.

Let $f$ be a slice function induced by a stem function $F$ (i.e.~$f=\mathcal{I}
(F)$) and let $q\in\H$, $q\in\C_I$, $I\in \S$.

Then
\begin{equation}\label{cullen}
(\partial_If)(q)=\mathcal{I}\left(\frac{\partial F}{\partial z}\right)(q),
\quad
(\bar \partial_If)(q)=
\mathcal{I}\left(\frac{\partial F}{\partial \overline{z}}\right)(q).
\end{equation}

These derivatives are 
also called ``Cullen derivatives''.

We are now in the position to define slice regular functions (see Definition 8 in \cite{ghiloniperotti}).
\begin{definition}
  Let $\Omega_{D}$ be a circular open set.
  A slice function $f=\mathcal{I}(F)\in\mathcal{S}(\Omega_{D})$
  is \textit{(left) regular} if its stem function $F$ is holomorphic. 
The set of regular functions will be denoted by
\begin{equation*}
\mathcal{SR}(\Omega_{D}):=\{f\in\mathcal{S}(\Omega_{D})\,|\,f=\mathcal{I}(F), \,\,\, F:D\rightarrow \HC \mbox{ holomorphic}\}.
\end{equation*}
\end{definition}

The set of regular functions is a real vector space and a right $\H $-module.
In the case in which $\Omega_{D}$ is a slice domain, the definition of regularity is equivalent to the one given in \cite{genstostru}.

\begin{remark}
A function $f=\mathcal{I}(F)\in\mathcal{S}^{1}(\Omega_{D})$ is 
called \textit{(left) anti-regular} if its stem function $F$ is anti-holomorphic.
\end{remark}

We recall a key lemma of this theory that will be useful later on, \cite{genstostru}.
\begin{lemma}[Splitting]\label{splitting}
  Let $f$ be a regular function defined on an open set $\Omega$ of $\mathbb{H}.$ Then, for any $I \in \mathbb{S}$ and for any $J \in \mathbb{S}$ with $J \perp I,$ there exist two holomorphic functions
  $g_I,h_I \colon \Omega \cap \mathbb{C}_I \to \mathbb{C}_I$ such that, $\forall z = x + y I$, it is :
$$
f_I (z)=g_I(z)+h_I(z)J
$$
where $f_I$ is the restriction of $f$ to $\mathbb{C}_I.$
\end{lemma}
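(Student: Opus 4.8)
The plan is to exploit the orthogonal decomposition of $\H$ determined by the pair $I, J$. Since $J \perp I$ and both are imaginary units, the set $\{1, I, J, IJ\}$ is an orthonormal real basis of $\H$, with $IJ$ again an imaginary unit orthogonal to $I$. Writing an arbitrary quaternion as $q = a + bI + cJ + dIJ$ with $a,b,c,d \in \R$ and regrouping as $q = (a+bI) + (c+dI)J$, one sees that every $q \in \H$ decomposes \emph{uniquely} as $q = g + hJ$ with $g, h \in \C_I$; equivalently $\H = \C_I \oplus \C_I J$ as a left $\C_I$-vector space with basis $\{1, J\}$.

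Applying this decomposition pointwise to the restriction $f_I = f|_{\Omega \cap \C_I}$, I would obtain two uniquely determined functions $g_I, h_I \colon \Omega \cap \C_I \to \C_I$ satisfying $f_I(z) = g_I(z) + h_I(z)J$. These are real-differentiable because $f$ is, so it only remains to prove that $g_I$ and $h_I$ are holomorphic once $\C_I$ is identified with $\C$ via $I \leftrightarrow i$.

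The key step invokes the regularity of $f$, namely $\overline{\partial}_I f_I = \frac{1}{2}\left(\frac{\partial}{\partial x} + I\frac{\partial}{\partial y}\right)f_I = 0$ on $\Omega \cap \C_I$. Substituting $f_I = g_I + h_I J$ and using that $I$ commutes with every element of $\C_I$ --- so that $I\,\frac{\partial}{\partial y}(h_I J) = \left(I\,\frac{\partial}{\partial y} h_I\right)J$ with $I\,\frac{\partial}{\partial y} h_I \in \C_I$ --- the operator respects the direct sum and
\[
0 = \overline{\partial}_I f_I = \left(\overline{\partial}_I g_I\right) + \left(\overline{\partial}_I h_I\right)J.
\]
Since the two summands lie in the complementary subspaces $\C_I$ and $\C_I J$, each must vanish separately, giving $\overline{\partial}_I g_I = 0$ and $\overline{\partial}_I h_I = 0$. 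Under the identification $\C_I \cong \C$ these are precisely the Cauchy--Riemann equations, whence $g_I$ and $h_I$ are holomorphic, as required.

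The only real point of care --- and the sole potential obstacle --- is the bookkeeping of left versus right multiplication: the coefficient $I$ acts on the left while $J$ is multiplied on the right, so one must verify via associativity and the commutation of $I$ with $\C_I$ that the differential operator genuinely preserves the splitting $\H = \C_I \oplus \C_I J$ and does not mix the two summands. Once this is checked, the remainder of the argument is purely formal.
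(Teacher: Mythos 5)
The paper does not prove this lemma at all --- it is recalled from \cite{genstostru} without proof --- but your argument is the standard one found there, and it is correct: the decomposition $\H=\C_I\oplus\C_I J$ via the orthonormal basis $\{1,I,J,IJ\}$, the observation that left multiplication by $I$ preserves both summands (since $I(h_IJ)=(Ih_I)J$ with $Ih_I\in\C_I$), and the resulting componentwise vanishing of $\overline{\partial}_I$ give exactly the Cauchy--Riemann equations for $g_I$ and $h_I$. You correctly identified and resolved the one delicate point, namely that the left action of $I$ does not mix $\C_I$ with $\C_I J$.
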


\subsubsection{Product of slice functions and their zero set}

In general, the pointwise product of slice functions is not a slice 
function.
However there is some product called ``slice product'' which 
does turn slice functions into slice functions.

The following notion is of great importance in the theory.
For the following basic facts on this ``slice product''
see 
\cite{ghiloniperotti}
and \cite{genstostru}. 

\begin{definition}
Let $f=\mathcal{I}(F)$, $g=\mathcal{I}(G)$ both belonging to $\mathcal{S}(\Omega_{D})$ then the \textit{slice product} of $f$ and $g$ is the slice function
\begin{equation*}
f* g:=\mathcal{I}(FG)\in\mathcal{S}(\Omega_{D}).
\end{equation*}
\end{definition}
Explicitly, if $F=F_{1}+F_2\imath $ and $G=G_{1}+ G_{2}\imath$ are stem functions, then
\begin{equation*} 
FG=F_{1}G_{1}-F_{2}G_{2}+(F_{1}G_{2}+F_{2}G_{1})\imath .
\end{equation*}

\begin{definition}
A slice function $f=\mathcal{I}(F)
\in {\mathcal S}(\Omega_D)$ 
is called \textit{slice-preserving} if, 
for all $J\in \S $, $f(\Omega_{D}\cap\mathbb{C}_{J})\subset \mathbb{C}_{J}$.
\end{definition}

Slice preserving functions satisfy the following characterization.
\begin{proposition}\label{slpreschar}
Let $f=\mathcal{I}(F_{1}+F_2\imath )$ be a slice function. Then $f$ is slice preserving if and only if the $\H $-valued components $F_{1}$, $F_{2}$ are real valued.
\end{proposition}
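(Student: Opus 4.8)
The plan is to unwind both implications directly from the defining relation $f(\alpha+I\beta)=F_1(\alpha+i\beta)+I\,F_2(\alpha+i\beta)$, reducing the geometric condition ``$f(\Omega_D\cap\C_J)\subset\C_J$ for all $J$'' to an algebraic condition on the values $F_1(z),F_2(z)\in\H$ at a single fixed point $z=\alpha+i\beta\in D$.

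First I would dispose of the easy implication. If $F_1$ and $F_2$ are real valued, then for every $J\in\S$ and every $\alpha+J\beta\in\Omega_D\cap\C_J$ the value $f(\alpha+J\beta)=F_1(z)+J\,F_2(z)$ is an $\R$-linear combination of $1$ and $J$, hence lies in $\C_J=\R+\R J$; so $f$ is slice preserving.

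For the converse, fix $z=\alpha+i\beta\in D$. Because $\Omega_D$ is circular, the point $\alpha+J\beta$ belongs to $\Omega_D\cap\C_J$ for every $J\in\S$, so the slice-preserving hypothesis yields the pointwise condition $F_1(z)+J\,F_2(z)\in\C_J$ for all $J\in\S$. The key observation is that $\C_J$ is exactly the centralizer of $J$ in $\H$ (an imaginary unit commutes precisely with the elements of its own slice, and a dimension count shows the centralizer is no larger), so the condition becomes $\bigl(F_1(z)+J F_2(z)\bigr)J=J\bigl(F_1(z)+J F_2(z)\bigr)$ for all $J$.

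The last step is the only real computation, and it is where I expect the work to lie. Writing $F_1(z)=a_0+a_1 i+a_2 j+a_3 k$ and $F_2(z)=b_0+b_1 i+b_2 j+b_3 k$, I would specialize the pointwise condition to the three units $J=i,j,k$ and read off, in each case, the vanishing of the two ``off-slice'' components of $F_1(z)+J F_2(z)$. This produces a small homogeneous linear system in $a_1,\dots,b_3$ whose only solution is $a_1=a_2=a_3=b_1=b_2=b_3=0$, that is $\Im F_1(z)=\Im F_2(z)=0$. Since $z\in D$ was arbitrary, $F_1$ and $F_2$ are real valued on all of $D$, as claimed. Equivalently, one may phrase the same computation intrinsically: writing $u=\Im F_1(z)$ and $v=\Im F_2(z)$, membership in $\C_J$ forces $u-\langle u,J\rangle J+J\times v=0$ for every unit imaginary $J$, and testing this on $J$ parallel to $v$ and on $J$ orthogonal to both $u$ and $v$ again forces $u=v=0$.
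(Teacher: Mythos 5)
Your proof is correct. Note that the paper itself gives no proof of this proposition: it is quoted as a known characterization (it goes back to Ghiloni--Perotti, \cite{ghiloniperotti}), so there is no in-paper argument to compare against. Your two-step reduction is sound: the easy direction is immediate, and for the converse the circularity of $\Omega_D$ does give $F_1(z)+JF_2(z)\in\C_J$ for every $J\in\S$ at each fixed $z=\alpha+i\beta\in D$, and the resulting linear system (whether in coordinates with $J=i,j,k$, or intrinsically as $u-\langle u,J\rangle J+J\times v=0$ with $u=\Im F_1(z)$, $v=\Im F_2(z)$) does force $u=v=0$; I checked that the three special units already suffice. One small streamlining you might prefer: since $\S$ is closed under $J\mapsto -J$, adding and subtracting the conditions for $J$ and $-J$ immediately decouples them into $u-\langle u,J\rangle J=0$ and $J\times v=0$ for all $J$, each of which trivially forces the corresponding vector to vanish; this avoids the case analysis on $J$ parallel or orthogonal to $v$. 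Also, the appeal to $\C_J$ being the centralizer of $J$ is correct but not actually needed, since you end up reading off the off-slice components directly.
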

Since real numbers commute with all quaternions, this has the
following consequence:

Let $f, g\in\mathcal{S}(\Omega_{D})$. If $f$ is slice preserving, then
\begin{equation*}
(f*g)(x)=f(x)g(x).
\end{equation*}
If $f$ and $g$ are both slice-preserving, then
 $fg=f*g=g*f=gf$.

As stated in \cite{genstostru}, if
$f$ is a  regular function defined on $\mathbb{B}_{\rho}$, the ball of center 0 and radius $\rho,$ then it is slice preserving if and only if $f$ can be expressed as a power series of the form
\begin{equation*}
f(x)=\sum_{n\in\mathbb{N}}x^{n}a_{n},
\end{equation*}
with $a_{n}$ real numbers.

The following definitions are taken from \cite{genstostru,ghiloniperotti}.
\begin{definition}
Let $f=\mathcal{I}(F)\in\mathcal{S}(\Omega_{D})$, then also $F^{c}(z)=F(z)^{c}:=F_{1}(z)^{c}+F_2(z)^{c}\imath $ is a stem function.
We set
\begin{itemize}
\item $f^{c}:=\mathcal{I}(F^{c})\in\mathcal{S}(\Omega_{D})$, the \textit{slice conjugate} of $f$;
\item $f^{s}:=f^c* f$, the \textit{symmetrization} of $f$.
\end{itemize}
\end{definition}

\begin{remark}
We have that $(FG)^{c}=G^{c}F^{c}$, and so $(f* g)^{c}=g^{c}* f^{c}$.
In particular,
$f^{s}=(f^{s})^{c}$.
Moreover it holds
  \begin{equation*}
   (f* g)^{s}=(f)^{s}(g)^{s}\quad \mbox{and }\quad (f^{c})^{s}= f^{s}.
  \end{equation*}
  Another observation is that, if $f$ is slice preserving,
  then $f^{c}=f$ and so $f^{s}=f^{2}$.
\end{remark}
Frequently, the sum $f+f^c$ is denoted by $Tr(f)$.

\subsubsection{Zeros of regular functions}
\label{sect-zero}
We are going now to recall some key facts about the zeros of a slice function.

Let $f:\Omega_{D}\rightarrow \H $ be any slice function with
zero locus
\[
\mathcal{Z}(f) = \{ x\in\Omega_D : f(x)=0 \}.
\]
Let $x\in\mathcal{Z}(f)$.
There are the following three possibilities:

\begin{itemize}
\item $x\in\mathbb{R}$, i.e., $x$ is a \textit{real} zero;
\item $x$  a \textit{punctual (non-real)} zero, i.e.,  
$x\notin\mathbb{R}$ and $\S _{x}\cap\mathcal{Z}(f)=\{x\}$;
\item $x$ a \textit{spherical zero}, i.e., 
$x\notin\mathbb{R}$ and $\S _{x}\subset\mathcal{Z}(f)$.
\end{itemize}

The inclusion
\begin{equation}\label{zerosofprod}
\mathcal{Z}(f)\subset\mathcal{Z}(f*g),
\end{equation}
holds for any two slice functions $f,g:\Omega_{D}\rightarrow\H $,
while in general $\mathcal{Z}(g)\not\subset\mathcal{Z}(f*g)$.

What is true in general is the following equality:
\begin{equation*}
\bigcup_{x\in \mathcal{Z}(f* g)} \S _{x}=\bigcup_{x\in  \mathcal{Z}(f)\cup  \mathcal{Z}(g)}\S _{x}
\end{equation*}

\begin{theorem}[\cite{genstostru}]
Let $f\in\mathcal{SR}(\Omega_{D})$. If $\S _{x}\subset\Omega_{D}$ then the zeros of $f^{c}$ on
$\S _{x}$ are in bijective correspondence with those of $f$. Moreover $f^{s}$ vanishes exactly on the sets $\S _{x}$
on which $f$ has a zero.
\end{theorem}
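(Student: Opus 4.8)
The plan is to reduce both assertions to the algebra of the single quaternionic equation $a+Jb=0$ on a fixed sphere, after first observing that the symmetrization $f^{s}$ is slice-preserving. Throughout, write $f=\mathcal{I}(F)$ with $F=F_{1}+F_{2}\imath$ holomorphic, fix a sphere $\S_{x}\subset\Omega_{D}$ with $x=\alpha+I\beta$, and let $z=\alpha+i\beta\in D$ be the corresponding point; the case $\beta=0$ (a single real point) is immediate, so assume $\beta\neq 0$.

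First I would compute the stem function of $f^{s}=f^{c}*f=\mathcal{I}(F^{c}F)$. By the product formula, $F^{c}F=(F_{1}^{c}F_{1}-F_{2}^{c}F_{2})+(F_{1}^{c}F_{2}+F_{2}^{c}F_{1})\imath$, and since $F_{1}^{c}F_{1}=|F_{1}|^{2}$, $F_{2}^{c}F_{2}=|F_{2}|^{2}$ are real and $F_{1}^{c}F_{2}+F_{2}^{c}F_{1}=F_{1}^{c}F_{2}+(F_{1}^{c}F_{2})^{c}=2\Re(F_{1}^{c}F_{2})$ is real, both components are real-valued. Hence, by Proposition~\ref{slpreschar}, $f^{s}$ is slice-preserving, so $f^{s}(\alpha+J\beta)=N_{1}(z)+JN_{2}(z)$ with $N_{1}=|F_{1}|^{2}-|F_{2}|^{2}$ and $N_{2}=2\Re(F_{1}^{c}F_{2})$ real. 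Because $N_{1}+JN_{2}=0$ forces $N_{1}=N_{2}=0$, the restriction of $f^{s}$ to $\S_{x}$ is either identically zero or nowhere zero, and it vanishes there exactly when $N_{1}(z)=N_{2}(z)=0$.

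Next I would analyse the restrictions of $f$ and $f^{c}$ to $\S_{x}$. Put $a=F_{1}(z)$, $b=F_{2}(z)$, so that $f(\alpha+J\beta)=a+Jb$ and $f^{c}(\alpha+J\beta)=a^{c}+Jb^{c}$. If $b=0$ then $f$ vanishes on $\S_{x}$ iff $a=0$, i.e.\ a spherical zero, and here $N_{1}=|a|^{2}$, $N_{2}=0$. If $b\neq 0$ the only candidate is $J=-ab^{-1}$, and using $\Re(pq)=\Re(qp)$ and $\Re(q)=\Re(q^{c})$ one finds $|J|^{2}=|a|^{2}/|b|^{2}$ and $\Re(J)=-\Re(a^{c}b)/|b|^{2}$, so $J\in\S$ is equivalent to $N_{1}(z)=N_{2}(z)=0$. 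The identical computation for $a^{c}+Jb^{c}$, which has the same $N_{1},N_{2}$ (indeed $(f^{c})^{s}=f^{s}$), shows $f^{c}$ has a zero on $\S_{x}$ under the same condition and of the same type (spherical iff $b=0$, punctual otherwise).

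Finally I would assemble the conclusions. The chain ``$f$ has a zero on $\S_{x}$'' $\iff N_{1}(z)=N_{2}(z)=0 \iff$ ``$f^{s}$ vanishes on $\S_{x}$'', together with the all-or-nothing behaviour of $f^{s}$ on $\S_{x}$, gives the ``moreover'' statement that $f^{s}$ vanishes precisely on the spheres carrying a zero of $f$. For the bijection, on any $\S_{x}$ with $N_{1}=N_{2}=0$ both $f$ and $f^{c}$ have zeros of the same type: either both vanish on all of $\S_{x}$ (when $b=0$), where the identity serves, or each has a single zero, namely $\alpha+J_{0}\beta$ with $J_{0}=-ab^{-1}$ for $f$ and $\alpha+K_{0}\beta$ with $K_{0}=-a^{c}(b^{c})^{-1}$ for $f^{c}$, so $\alpha+J_{0}\beta\mapsto\alpha+K_{0}\beta$ is the desired bijection. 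The only delicate point is the reduction of $|J|=1$ and $\Re(J)=0$ to $N_{1}=N_{2}=0$; once $\Re(a^{c}b)=\Re(ab^{c})$ is noted this is routine, so I expect no real obstacle here — the substance of the proof is simply that $f^{s}$ is slice-preserving.
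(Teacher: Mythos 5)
Your proof is correct. Be aware, though, that the paper itself contains no proof of this statement: it is quoted from \cite{genstostru} as a known result, so there is no in-paper argument to compare yours against step by step. What you have written is the standard stem-function argument: restrict to a single sphere $\S_x$, where $f$, $f^c$ and $f^s$ become $J\mapsto a+Jb$, $J\mapsto a^c+Jb^c$ and $J\mapsto N_1+JN_2$ with $a=F_1(z)$, $b=F_2(z)$, $N_1=|a|^2-|b|^2$ and $N_2=2\Re(a^cb)$ real, and then observe that solvability of $a+Jb=0$ with $J\in\S$ is equivalent to $N_1(z)=N_2(z)=0$, a condition invariant under $(a,b)\mapsto(a^c,b^c)$. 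The individual steps all check out: the components of $F^cF$ are real, so $f^s$ is slice-preserving by Proposition~\ref{slpreschar}; $N_1+JN_2=0$ with $N_1,N_2\in\R$ and $J\in\S$ forces $N_1=N_2=0$; and the identity $\Re(ab^c)=\Re(a^cb)$ correctly reduces $\Re(J)=0$ to $N_2=0$ for $J=-ab^{-1}$. Your argument in fact yields slightly more than the statement, namely the full trichotomy on each sphere (no zeros for either function, exactly one punctual zero for each of $f$ and $f^c$, or a common spherical zero), which is consistent with the classification of zeros recalled in the paper. I see no gaps.
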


\subsubsection{Identity principle}

\begin{theorem}[Identity principle, \cite{gentilistruppa1}, \cite{gentilistruppa}]\label{identity}  
 Let $\Omega_D$ be a slice domain. Given $f=\mathcal{I}(F):\Omega_{D}\rightarrow \H $ a regular function, if there exists a $J\in\S $ such that  $(\Omega_{D}\cap\mathbb{C}_{J})\cap\mathcal{Z}(f)$ admits an accumulation point, then $f\equiv 0$ on $\Omega_{D}$.
 \end{theorem}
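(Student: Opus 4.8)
The plan is to reduce the statement to the classical identity theorem for holomorphic functions of one complex variable on the distinguished slice $\C_J$, and then to spread the vanishing to every other slice by means of the Representation Formula. The two tools doing all the work are the Splitting Lemma (Lemma~\ref{splitting}) and the Representation Formula recalled just before the Regularity subsection; the holomorphicity of the stem function $F$ enters only through the Splitting Lemma.

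First I would fix $K\in\S$ with $K\perp J$ and apply the Splitting Lemma to $f$ on the slice $\C_J$: there exist holomorphic functions $g,h\colon\Omega_D\cap\C_J\to\C_J$ with $f_J(z)=g(z)+h(z)K$ for every $z\in\Omega_D\cap\C_J$. Since $g(z),h(z)\in\C_J$ while $K$ is orthogonal to $\C_J$, the decomposition $\H=\C_J\oplus\C_J K$ is a direct sum, so $f_J(z)=0$ holds exactly when $g(z)=0$ and $h(z)=0$. In particular $(\Omega_D\cap\C_J)\cap\mathcal{Z}(f)\subseteq\mathcal{Z}(g)$ and likewise $\subseteq\mathcal{Z}(h)$. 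Because $\Omega_D$ is a slice domain, $D$ is connected and meets $\R$, so its symmetrization $D^s$ is connected and the slice $\Omega_D\cap\C_J$, being a copy of $D^s$ under the identification $\C\simeq\C_J$, is a connected planar domain. The hypothesis supplies an accumulation point of zeros of $f$ inside this domain, and the same point is then an accumulation point of zeros of each of the holomorphic functions $g$ and $h$; the one-variable identity theorem therefore forces $g\equiv0$ and $h\equiv0$, whence $f\equiv0$ on all of $\Omega_D\cap\C_J$.

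It then remains to propagate the vanishing off the slice $\C_J$, and here the Representation Formula does it at once: for arbitrary $x,y\in\R$ with $x+yJ\in\Omega_D$ and any $L\in\S$,
\[
f(x+yL)=\frac{1-LJ}{2}f(x+yJ)+\frac{1+LJ}{2}f(x-yJ).
\]
Both $x+yJ$ and $x-yJ$ lie in $\C_J$, where $f$ has already been shown to vanish, so the right-hand side is zero. Since every point of $\Omega_D$ is of the form $x+yL$ with $L\in\S$ and $x+iy\in D^s$, this yields $f\equiv0$ on $\Omega_D$.

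The only genuinely delicate point is the first step. One must ensure that the accumulation point furnished by the hypothesis actually lies in the connected domain $\Omega_D\cap\C_J$, so that the classical identity theorem is applicable, and that the splitting correctly translates the quaternionic zero condition into the simultaneous vanishing of the two scalar-type holomorphic functions $g$ and $h$. Once the Splitting Lemma has effected this reduction the remainder is routine, and the Representation Formula makes the transition to the other slices essentially automatic.
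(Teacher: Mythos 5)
Your proof is correct, and it is essentially the standard argument for this result: the paper itself states the Identity Principle as a quoted theorem from \cite{gentilistruppa1}, \cite{gentilistruppa} without reproducing a proof, and the route you take (Splitting Lemma to reduce to the one-variable identity theorem on the connected slice $\Omega_D\cap\C_J$, then the Representation Formula to propagate the vanishing to every other slice) is exactly the argument of those references. You also correctly flag the one point that needs care, namely that the accumulation point must lie inside $\Omega_D\cap\C_J$ for the classical identity theorem to apply.
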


\begin{corollary}\label{cor-id}
Let $f$ be a regular function on a circular slice domain $\Omega_D$.

If there exists a convergent sequence
of distinct numbers $p_n=x_n+iy_n$ in $D$
such that $f$ has at least one zero on every sphere of the form
\[
\Sigma_n=\{ x_n+Iy_n: I\in\S\},
\]
then $f$ vanishes identically.
\end{corollary}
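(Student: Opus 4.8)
The plan is to reduce the statement to the single-slice Identity Principle (Theorem~\ref{identity}) by replacing $f$ with its symmetrization $f^s=f^c* f$, whose decisive advantage is that it is slice preserving. The difficulty to overcome is that the hypothesis only furnishes, on each sphere $\Sigma_n$, a zero $x_n+I_ny_n$ lying on an a priori different slice $\C_{I_n}$, so the zeros of $f$ itself cannot be fed directly into an identity theorem on one fixed slice. The symmetrization repairs this: by the theorem on zeros of regular functions quoted above, $f^s$ vanishes on the entire sphere $\S_x$ whenever $f$ has a zero on it.

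First I would record that $f^s$ is again regular (being the slice product of the regular functions $f^c$ and $f$) and that it is slice preserving: writing $f=\mathcal{I}(F)$ with $F=F_1+F_2\imath$, a direct computation gives $F^cF=(|F_1|^2-|F_2|^2)+2\langle F_1,F_2\rangle\imath$ with real-valued components, so Proposition~\ref{slpreschar} applies. Next, since each $p_n=x_n+iy_n$ lies in $D$, circularity gives $\Sigma_n=\S_{x_n+iy_n}\subset\Omega_D$, and the assumption that $f$ has a zero on $\Sigma_n$ forces $f^s$ to vanish on all of $\Sigma_n$, in particular at the point $x_n+iy_n\in\C_i$. Because $f^s$ is slice preserving and regular, its restriction $g=f^s|_{\C_i}\colon\Omega_D\cap\C_i\to\C_i$ is holomorphic once we identify $\C_i\simeq\C$, and it vanishes at the distinct points $p_n$ accumulating at $p\in D$. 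The classical identity theorem then yields $g\equiv0$ on the connected set $\Omega_D\cap\C_i$, and Theorem~\ref{identity} (applied with $J=i$) upgrades this to $f^s\equiv0$ on all of $\Omega_D$.

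The remaining, and in my view the only genuinely delicate, step is to deduce $f\equiv0$ from $f^s\equiv0$; note that the zeros theorem alone only tells us that $f$ has some zero on every sphere, which is not yet enough. Here I would exploit the stem relation on the real axis: for real $\alpha\in D$ the condition $F(\bar z)=\overline{F(z)}$ forces $F_2(\alpha)=0$, whence $F^cF(\alpha)=|F_1(\alpha)|^2$; since $f^s\equiv0$ means $F^cF\equiv0$, we obtain $F_1\equiv0$ and therefore $F\equiv0$ on the real interval $D\cap\R$. As $F$ is holomorphic on the domain $D$ and $D\cap\R$ has accumulation points in $D$, the identity theorem for holomorphic maps gives $F\equiv0$, i.e. $f\equiv0$. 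One caveat worth flagging is that the whole argument requires the limit of the $p_n$ to lie in $D$ (so that the accumulation point is interior); this is implicit in reading ``convergent sequence in $D$'' as convergence within $D$.
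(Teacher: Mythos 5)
Your proof is correct and follows essentially the same route as the paper: pass to the symmetrization $f^s=f^c*f$, observe that it vanishes on every sphere $\Sigma_n$ and hence has an accumulation point of zeros on a fixed slice, and invoke the identity principle (Theorem~\ref{identity}). The only difference is that you explicitly justify the final implication $f^s\equiv 0\Rightarrow f\equiv 0$ via the stem function on the real axis, a step the paper leaves implicit.
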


\begin{proof}
Under the assumptions of the corollary, the symmetrization
$f^s=f^c*f$ vanishes identically on each $\Sigma_n$. Hence
$(\Omega_{D}\cap\mathbb{C}_{J})\cap\mathcal{Z}(f^s)$ contains
an accumulation point for any $J\in\S$. Consequently $f$
must vanish identically.
\end{proof}

\subsubsection{Multiplicities of zeros}
Let $f\in\mathcal{SR}(\Omega_D)$ such that $f^{s}$ does not vanish identically. Given $n\in\mathbb{N}$ and $q \in \mathcal{Z}(f)$,
we say that $x$ is a zero of $f$
of \textit{total multiplicity} $n$, and we will denote it by $m_f(x )=n$,
if $((q-x )^{s})^n\mid f^{s}$ and $((x-q )^{s})^{n+1}\nmid f^{s}$. 
 If $m_f(x )=1$, then $x $ is called a \textit{simple zero} of $f$.

\begin{lemma}\label{right-factor}
Let $f$ be a regular function on a circular domain
with $f(p)=0$. Then there exists
$\tilde p\in\S_p$ and a regular function $g$
such that
$f(q)=g(q)*(q-\tilde p)$.
\end{lemma}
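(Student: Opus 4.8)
The plan is to pass to stem functions, where the slice $*$-product becomes the ordinary (noncommutative) product in $\HC$, and to realise the factorization as a right division of holomorphic $\HC$-valued functions. I would write $f=\mathcal I(F)$ with $F=F_1+F_2\imath$ holomorphic on $D$, and let $z_0=\alpha+i\beta\in D$ correspond to $p=\alpha+I\beta$, so the hypothesis reads $F_1(z_0)+IF_2(z_0)=0$. For a candidate $\tilde p=\alpha+K\beta\in\S_p$ (with $K\in\S$ still to be chosen) the linear slice function $q\mapsto q-\tilde p$ has stem function $L(z)=(a-\tilde p)+b\imath$ (writing $z=a+ib$), and a short computation gives $L L^c=L^c L=\sigma$, where $\sigma(z)=(z-z_0)(z-\bar z_0)$ is a central, scalar $\C$-valued holomorphic function. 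Hence, away from the zeros of $\sigma$, the element $L(z)$ is invertible with two-sided inverse $L(z)^{-1}=L(z)^c/\sigma(z)$.

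Setting $G:=FL^{-1}=FL^c/\sigma$ one has $GL=F$, so that $\mathcal I(G)*(q-\tilde p)=\mathcal I(GL)=f$; it then only remains to arrange that $G$ extends holomorphically across the zeros of $\sigma$ and is again a stem function. Since $FL^c$ is holomorphic and, in the non-real case, $\sigma$ has simple zeros at $z_0\neq\bar z_0$, it suffices to force the numerator to vanish at $z_0$: the reality relations $F(\bar z)=\overline{F(z)}$ and $L^c(\bar z)=\overline{L^c(z)}$ then give vanishing at $\bar z_0$ automatically and, by the same relations, yield $G(\bar z)=\overline{G(z)}$, i.e. that $G$ is a stem function and $g:=\mathcal I(G)\in\mathcal{SR}(\Omega_D)$. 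Thus everything reduces to choosing $K\in\S$ so that $F(z_0)L(z_0)^c=0$.

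The main point is this choice. With $A=F_1(z_0)$, $B=F_2(z_0)$ and the relation $A=-IB$ coming from $f(p)=0$, one computes $L(z_0)^c=\beta(\imath+K)$ and, using $\imath^2=-1$ together with the centrality of $\imath$,
\[
F(z_0)L(z_0)^c=\beta\big[(AK-B)+(A+BK)\imath\big].
\]
In the non-real case $\beta\neq 0$ the vanishing of both $\H$-components reduces to the single equation $AK=B$ (the equation $A+BK=0$ then follows from $K^2=-1$). If $B\neq 0$ this is solved by $K=B^{-1}IB$, which squares to $-1$ and hence genuinely lies in $\S$, giving the factor $\tilde p=B^{-1}pB\in\S_p$. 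If $B=0$ then also $A=0$, so $F(z_0)=0$, $p$ is a spherical zero, and any $K$ works. The real case $\beta=0$ is handled directly within the same scheme: there $F(z_0)=0$ because $F_2$ is odd, $\sigma=(z-\alpha)^2$, and $G=F/(z-\alpha)$ is holomorphic, so one takes $\tilde p=p$.

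The only genuinely delicate step is the selection of $\tilde p$ in the previous paragraph. Because $\HC\cong M_2(\C)$ is noncommutative and has zero divisors, one cannot divide by $L$ for an arbitrary point of $\S_p$: the quotient $FL^{-1}$ is holomorphic for the one specific conjugate $K=B^{-1}IB$ only, and identifying this $K$ is where the argument really lives. Once it is found, the remaining verifications (holomorphy of $G$ via the order count against $\sigma$, and the stem/reality condition) are routine, and I would close by recording $g=\mathcal I(G)$ together with $f=g*(q-\tilde p)$.
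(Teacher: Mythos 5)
Your proof is correct, but it takes a genuinely different route from the paper's. The paper disposes of the lemma in three lines by passing to the conjugate: since the zeros of $f^c$ on $\S_p$ are in bijection with those of $f$ (a cited theorem), there is $a\in\S_p$ with $f^c(a)=0$, whence the standard \emph{left} root factorization gives $f^c(q)=(q-a)*h(q)$, and applying $(\cdot)^c$ together with $(F G)^c=G^cF^c$ yields $f(q)=h^c(q)*(q-a^c)$, so $\tilde p=a^c$. You instead prove the right factorization from scratch at the stem-function level: the identity $LL^c=L^cL=\sigma$ with $\sigma(z)=(z-z_0)(z-\bar z_0)$ central, the quotient $G=FL^c/\sigma$, and the explicit solution $K=B^{-1}IB$ of $AK=B$ (I checked the computation $F(z_0)L(z_0)^c=\beta[(AK-B)+(A+BK)\imath]$, the reduction of the two component equations to one via $K^2=-1$, the spherical case $B=0$, and the real case where $\sigma$ has a double zero compensated by the vanishing of both $F$ and $L^c$ at $\alpha$ --- all are right, and your $\tilde p=B^{-1}pB$ agrees with the unique right root on test examples such as $(q-I)*(q-J)$). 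What your approach buys is self-containedness and an explicit formula for $\tilde p$ in terms of the stem function; what the paper's buys is brevity, at the cost of invoking the left-factor theorem and the zero-correspondence between $f$ and $f^c$ as black boxes.
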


\begin{proof}
There is an element $a\in\S_p$ such that $f^c(a)=0$,
implying that there exists a regular function $h$
with
$f^c(q)=(q-a)*h(q)$. It follows that
$f(q)=h^c(q)*(q-a^c)$.
\end{proof}

\subsubsection{Semi$-$regular functions and their poles}
We will recall now some concept of ``semi-regular functions''
which are the quaternionic analog of  meromorphic functions.
Here our main references are
\cite{genstostru} and \cite{ghilperstosing}. 

\begin{definition}
  Let $f=\mathcal{I}(F)\in\mathcal{SR}(\Omega_{D})$.
  We call the \textit{slice reciprocal} of $f$ the slice function
\begin{equation*}
f^{-*}:\Omega_{D}\setminus \mathcal{Z}(f^{s})\rightarrow \hh, \qquad f^{-*}=\mathcal{I}((F^cF)^{-1}F^c).
\end{equation*}
\end{definition}
From the previous definition it follows that, if $x\in\Omega_D\setminus \mathcal{Z}(f^{s})$, then
\begin{equation*} 
f^{-*}(x)=(f^{s}(x))^{-1}f^{c}(x).
\end{equation*}
The regularity of 
$f^{-*}$ on $\Omega_{D}\setminus \mathcal{Z}(f^{s})$
 just defined follows thanks to the last equality.

If $f$ is slice preserving, then $f^{c}=f$ and so $f^{-*}=f^{-1}$ where it is defined. Moreover $(f^{c})^{-*}=(f^{-*})^{c}$.

\begin{proposition}
Let $f\in\mathcal{SR}(\Omega_{D})$ such that $\mathcal{Z}(f)=\emptyset$, then $f^{-*}\in\mathcal{SR}(\Omega_{D})$ and 
\begin{equation*}
f* f^{-*}=f^{-*}* f=1.
\end{equation*}
\end{proposition}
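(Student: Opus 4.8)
The plan is to reduce everything to computations with the stem function $F$, using that $f=\mathcal{I}(F)$ is regular precisely when $F$ is holomorphic and that the slice product corresponds to the pointwise product of stem functions in $\HC$. First I would observe that the hypothesis $\mathcal{Z}(f)=\emptyset$ forces $\mathcal{Z}(f^s)=\emptyset$: by the theorem of \cite{genstostru} recalled above, $f^s$ vanishes exactly on those spheres $\S_x$ on which $f$ has a zero, and since $\Omega_D$ is circular (so $x\in\Omega_D$ implies $\S_x\subset\Omega_D$), the absence of zeros of $f$ leaves $f^s$ without zeros. Consequently the slice reciprocal $f^{-*}=\mathcal{I}((F^cF)^{-1}F^c)$ is in fact defined on all of $\Omega_D$, not merely on $\Omega_D\setminus\mathcal{Z}(f^s)$.

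The algebraic heart of the argument is the behaviour of $N:=F^cF$. Writing $F=F_1+F_2\imath$, a direct expansion gives
\[
F^cF=(|F_1|^2-|F_2|^2)+2\Re(F_1^cF_2)\,\imath ,
\]
whose components are real; equivalently, this is the statement that the symmetrization $f^s=\mathcal{I}(F^cF)$ is slice preserving (Proposition~\ref{slpreschar}) together with $f^s=(f^s)^c$. Hence $N$ takes values in the central subalgebra $\R\oplus\R\imath\cong\C$ of $\HC$, and by the previous paragraph $N$ is nowhere zero, so $N^{-1}$ exists in $\C$ and is again central in $\HC$. I would also record the commutation relation $FF^c=F^cF$: this follows either from the same expansion (using $\Re(F_1^cF_2)=\Re(F_1F_2^c)$, a consequence of $\Re(ab)=\Re(ba)$ for quaternions) or, more conceptually, from $(f^c)^s=f^s$, which upon passing to stem functions reads $\mathcal{I}(FF^c)=\mathcal{I}(F^cF)$ and hence $FF^c=F^cF$ by injectivity of $\mathcal{I}$.

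With these facts in hand the conclusion is immediate. Since $F$ and $F^c$ are holomorphic (quaternionic conjugation commutes with $\partial/\partial\bar z$) and $N$ is a nowhere-vanishing holomorphic $\C$-valued function, $N^{-1}$ is holomorphic, and therefore $(F^cF)^{-1}F^c$ is a product of holomorphic stem functions, so $f^{-*}\in\mathcal{SR}(\Omega_D)$. For the two identities I compute at the level of stem functions: $f^{-*}*f=\mathcal{I}(N^{-1}F^cF)=\mathcal{I}(1)=1$ directly, while for $f*f^{-*}=\mathcal{I}(FN^{-1}F^c)$ I use the centrality of $N^{-1}$ and the relation $FF^c=F^cF$ to rewrite $FN^{-1}F^c=N^{-1}FF^c=N^{-1}F^cF=1$, giving $f*f^{-*}=1$ as well.

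The only genuine subtleties, and where I would expect to spend the most care, are the two structural facts about $N=F^cF$: that it lands in the centre $\R\oplus\R\imath$ of $\HC$ (so that its complex inverse is automatically a two-sided inverse and may be moved freely across the noncommutative product) and that $\mathcal{Z}(f^s)=\emptyset$, which is what guarantees $N$ is invertible at every point rather than only away from $\mathcal{Z}(f^s)$. Everything else is formal manipulation of stem functions.
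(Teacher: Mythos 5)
Your proof is correct. The paper itself states this proposition without proof (it is a recalled fact from the prerequisites, with the paper only remarking that regularity of $f^{-*}$ follows from the pointwise identity $f^{-*}=(f^s)^{-1}f^c$ and the slice-preserving character of $f^s$), and your stem-function argument is exactly the standard justification behind that remark: the key points --- that $N=F^cF$ is a nowhere-vanishing holomorphic function into the centre $\R\oplus\R\imath$ of $\HC$, that $\mathcal{Z}(f)=\emptyset$ forces $\mathcal{Z}(f^s)=\emptyset$, and that $FF^c=F^cF$ --- are all identified and verified correctly, with the only (harmless) omission being an explicit check that $N^{-1}$ again satisfies the stem condition $\overline{N^{-1}(\bar z)}=N^{-1}(z)$.
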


The concept of a semi-regular function has been
  introduced in \cite{ghilperstosing}, 11.1-11.2.
  For our purposes the crucial property of semi-regular functions
  is that every semi-regular function $f$ may locally written in the
  form $F=g^{-*}*h$ with $g,h$ being slice regular functions.

\begin{lemma}\label{lem-4.1}
Let $f$ be a slice function, $x,y\in\mathbb{R}$, $I,J\in\S$.

Then
\[
f(x+yI)+f(x-yI)=
f(x+yJ)+f(x-yJ)
\]
\end{lemma}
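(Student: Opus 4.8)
The plan is to reduce the whole statement to the parity properties of the underlying stem function. Since $f$ is a slice function, I may write $f=\mathcal{I}(F)$ with $F=F_1+F_2\imath$ a stem function, and by \eqref{eq-stem} the evaluation rule gives, for any $I\in\S$ and $x,y\in\R$,
\[
f(x+yI)=F_1(x+iy)+I\,F_2(x+iy),
\qquad
f(x-yI)=F_1(x-iy)+I\,F_2(x-iy),
\]
the second identity coming from replacing $y$ by $-y$ (so that $I\beta=-yI$). This is the only place where the hypothesis ``slice function'' enters.

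Next I would invoke the symmetry of the stem function recorded in Lemma~\ref{stem-basic}, namely $F_1(\bar z)=F_1(z)$ and $F_2(\bar z)=-F_2(z)$. Applying these with $z=x+iy$ yields $F_1(x-iy)=F_1(x+iy)$ and $F_2(x-iy)=-F_2(x+iy)$. Adding the two displayed expressions, the two $I\,F_2$-terms cancel against each other and I am left with
\[
f(x+yI)+f(x-yI)=2\,F_1(x+iy).
\]

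The crucial observation is that the right-hand side contains no $I$ at all: the imaginary-unit dependence of a slice function lives entirely in its $F_2$-component, which is odd under $z\mapsto\bar z$ and is therefore annihilated by the symmetrizing sum $f(x+y\,\cdot\,)+f(x-y\,\cdot\,)$. Running the identical computation with $J$ in place of $I$ produces the same value $2\,F_1(x+iy)$, and comparing the two results gives the asserted equality.

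I do not anticipate any genuine obstacle here; the statement is essentially the remark that the ``$\pm$ average'' of a slice function over a sphere $\S_{x+y\,\cdot}$ depends only on the $F_1$-part of its stem function. The only point demanding care is the sign bookkeeping in the substitution $y\mapsto -y$, which is precisely the content of Lemma~\ref{stem-basic} and is what makes the $F_2$-terms cancel. One could alternatively derive the same conclusion from the representation formula quoted after Lemma~\ref{stem-basic}, but the stem-function computation is the most direct route.
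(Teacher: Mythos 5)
Your proof is correct, but it takes a different (though closely related) route from the paper's. The paper proves the lemma in two lines by writing the representation formula for $f(x+yJ)$ and for $f(x-yJ)$ in terms of $f(x\pm yI)$ and adding, so that the quaternionic coefficients $\frac{1-JI}{2}$ and $\frac{1+JI}{2}$ sum to $1$ in front of each term. You instead descend to the stem function: using \eqref{eq-stem} together with the parities $F_1(\bar z)=F_1(z)$, $F_2(\bar z)=-F_2(z)$ from Lemma~\ref{stem-basic}, you show that the symmetrized sum equals $2F_1(x+iy)$, which contains no imaginary unit and is therefore the same for $I$ and $J$. Both arguments are equally short and rest on equivalent characterizations of slice functions (the representation formula is itself derived from the stem-function decomposition). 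Your version has the small added benefit of identifying the common value of the sum as $2F_1(x+iy)$, which is precisely the quantity that reappears in the mean value formula of Proposition~\ref{gen-mean}; the paper's version avoids naming the stem function explicitly. Your sign bookkeeping in the substitution $y\mapsto -y$ is handled correctly.
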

\begin{proof}
Due to the representation formula we have
\[
f(x+yJ)=\frac{1-JI}2f(x+yI)+ \frac{1+JI}2f(x-yI)
\]
and 
\[
f(x-yJ)=\frac{1+JI}2f(x+yI)+ \frac{1-JI}2f(x-yI)
\]
Adding both above equalities yields the assertion of the lemma.
\end{proof}

\section{Divisors}\label{sect-div}
In complex analysis, the divisor of a holomorphic function is
the formal sum of its zeroes, counted with the respective multiplicities.
We propose that for a quaternionic slice regular function defined on $\Omega_D$
the divisor should
be defined as a formal sum of points in the closed upper plane intersected
with $D$, i.e., on $\{z\in D:\Im(z)\ge 0\}$.

\begin{definition}
  Let $\Omega_D$ be a slice domain
  and let $f$ be a slice regular
function on $\Omega_D$. Let $D^+=D\cap\{ z\in\C: \Im(z)\ge 0\}$.

Then the ``(slice) divisor'' $div(f)$ of $f$ is defined as the
formal $\Z$-linear combination $\sum_{z\in D^+} m_z(f)\{z\}$
where for $z=x+yi$ the multiplicity $m_z(f)$ is defined as follows:
$m_z(f)=m$ if in a neighbourhood of $\S_{x+yI}=
\{x+yJ:J\in\S\}$ the function $f$ can be written as
\[
f(q)=(q-a_1)*\ldots*(q-a_m)*g(q)
\]
with $a_i\in \S_{x+yI}$ and $g$ being a slice regular function without zeros
on $\S_{x+yI}$.
\end{definition}

Standard facts on zeros of slice regular functions (see \ref{sect-zero})
guarantee us the following properties:

\begin{itemize}
\item
$div(f*g)=div(f)+div(g)$ if both $f$ and $g$ are slice regular on $\Omega_D$.
\item
If $p_k=a_k+I_kb_k$
(with $a_k,b_k\in\R, b_k\ge 0, I_k\in\S $) are the isolated zeros with multiplicity 
$n_k$ and $\S_{c_k+Jd_k}$ are the spherical zeros with multiplicity
$m_k$, then
\[
div(f) = \sum_k n_k\{a_k+ib_k\} +2 m_k\{c_k+id_k\}
\]
\item
  $\{z\in D^+: div(f)>0\}$ is discrete in $D^+$
  (for $f\not\equiv 0$).
\end{itemize}

For example, let $I,J\in\S$ with $I\ne J$ and
consider $f(q)=(q-I)*(q-J)=q^2-q(I+J)+IJ$.
Then $f$ has a zero
only at $I$ while $g(q)=(q-J)*(q-I)$ has a zero only at $J$, but the
divisor is the same:
\[
div(f)=div(q-I)+div(q-J)=div(g)=2\{i\}.
\]

This notion of a divisor is easily extended
from (slice) regular to semi-regular functions,
since semi-regular functions may locally be written in the form
$f=g^{-*}h$ with $g,h$ slice regular.
If $z=x+yi$ is a point in a  symmetric domain $D$, and $f$ is semi-regular
on $\Omega_D$ we choose a sufficiently small symmetric
domain $D'$ with $p\in D'\subset D$ such that $f$ 
may be written in the
form $g^{-*}*h$ on $\Omega_{D'}$.
Then we define $div(f)=div(h)-div(g)$ on $D'$.

\begin{warning}
  In complex analysis, a meromorphic function $f$ is holomorphic
  iff $div(f)\ge 0$. The analog quaternionic statement is {\em not}
  true. For example, let $I,J\in\S$ and
  consider
  \[
  (q-I)*(q-J)\frac{1}{q^2+1}= (q^2-q(I+J)+IJ)\frac{1}{q^2+1}
  \]
  This is a semi-regular function whose divisor is zero,
  although $f$ is not slice regular unless $I=-J$.
\end{warning}

\section{A mean value theorem}
\begin{proposition}[General Mean Value Formula]\label{GMVF}
  \label{gen-mean}
  Let $\mu$ be a probability measure on $\S$
  which is
  invariant under $J\mapsto -J$.
  
  Let $f$ be a slice regular
  function on a slice domain $\Omega_D$
  induced by a stem function $F:D\to\HC$.
  Let $a,b,r\in\R$, $b\ge 0$, $r>0$ and $I\in\S$
  such that $\Omega_D$ contains the closed ball with radius $r$
  and center $a+bI$.
 
  Then we have:
  \begin{align*}
  F_1(a+bi)&=\frac{1}{4\pi}
  \int_\S\int_0^{2\pi}
  f(a+bJ+re^{J\theta})+f(a-bJ+re^{-J\theta})
  d\theta d\mu(J)\\
  &=\frac{1}{2\pi}
  \int_\S\int_0^{2\pi}
  f(a+bJ+re^{J\theta})
  d\theta d\mu(J)
  \end{align*} 
  as well as
  \[
  F_2(a+bi)= -\frac{1}{2\pi}
  \int_\S J \int_0^{2\pi}
  f(a+bJ+re^{J\theta}))
  d\theta d\mu(J)
  \]
  and therefore
  \begin{align*}
    f(a+bI)& = F_1(a+bi)+IF_2(a+bi)\\
    &=\frac{1}{2\pi}
  \int_\S  \int_0^{2\pi}
  (1-IJ)f(a+bJ+re^{J\theta})
  d\theta
  d\mu(J)\\
  \end{align*}
\end{proposition}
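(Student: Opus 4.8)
The plan is to reduce everything to the classical one-variable mean value property for holomorphic functions, applied slice by slice, and then to average over $\S$ exploiting the $J\mapsto -J$ invariance of $\mu$. First I would fix an imaginary unit $J\in\S$ and restrict $f$ to the slice $\C_J$. By the Splitting Lemma~\ref{splitting}, choosing $K\in\S$ with $K\perp J$, I can write $f_J=g_J+h_J K$ with $g_J,h_J:\Omega_D\cap\C_J\to\C_J$ holomorphic (after the identification $\C_J\simeq\C$, $\alpha+J\beta\leftrightarrow\alpha+i\beta$). The closed disc of radius $r$ centered at $a+bJ$ lies in $\Omega_D\cap\C_J$ by hypothesis, so the classical mean value theorem applies to $g_J$ and $h_J$, and by $\R$-linearity to $f_J$ itself, yielding
\[
f(a+bJ)=\frac{1}{2\pi}\int_0^{2\pi} f(a+bJ+re^{J\theta})\,d\theta .
\]

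Next I would record the two algebraic identities that let me read off $F_1$ and $F_2$. Since $f=\mathcal{I}(F)$ we have $f(a+bJ)=F_1(a+bi)+JF_2(a+bi)$, and replacing $J$ by $-J$ (equivalently $b$ by $-b$, using the stem relations $F_1(\bar z)=F_1(z)$, $F_2(\bar z)=-F_2(z)$ from Lemma~\ref{stem-basic}) gives $f(a-bJ)=F_1(a+bi)-JF_2(a+bi)$. Applying the slicewise mean value identity at both centers $a\pm bJ$ — and rewriting the second boundary integral via the harmless change of variable $\theta\mapsto-\theta$ so that it reads $f(a-bJ+re^{-J\theta})$ — addition and subtraction isolate, for each fixed $J$,
\[
F_1(a+bi)=\frac{1}{4\pi}\int_0^{2\pi}\bigl(f(a+bJ+re^{J\theta})+f(a-bJ+re^{-J\theta})\bigr)\,d\theta
\]
together with $2JF_2(a+bi)=\frac{1}{2\pi}\int_0^{2\pi}\bigl(f(a+bJ+re^{J\theta})-f(a-bJ+re^{-J\theta})\bigr)\,d\theta$.

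Finally I would integrate these over $\S$ against $\mu$. Because the left-hand sides are independent of $J$ and $\mu(\S)=1$, the first display immediately gives the symmetrized formula for $F_1$; moreover, the substitution $J\mapsto-J$ combined with the invariance of $\mu$ shows that the two boundary integrals contribute equally, which collapses the symmetrized expression to $\frac{1}{2\pi}\int_\S\int_0^{2\pi}f(a+bJ+re^{J\theta})\,d\theta\,d\mu(J)$. For $F_2$, I would multiply the second identity on the left by $-\tfrac12 J$ (using $J^2=-1$) and integrate; the same $J\mapsto-J$ symmetry turns the two resulting terms into two equal copies, producing the stated formula $F_2(a+bi)=-\frac{1}{2\pi}\int_\S J\int_0^{2\pi}f(a+bJ+re^{J\theta})\,d\theta\,d\mu(J)$. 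Combining via $f(a+bI)=F_1(a+bi)+IF_2(a+bi)$ and the associativity $I(Jf)=(IJ)f$ then yields the kernel $(1-IJ)$ in the last formula. The only genuinely delicate point is the bookkeeping of the involution $J\mapsto-J$: one must track consistently its effect on the center $a\pm bJ$, on the exponential $e^{\pm J\theta}$, and on the left factor $J$ multiplying $F_2$, since it is precisely the $\mu$-invariance under this involution that kills the cross terms and forces $\int_\S J\,d\mu(J)=0$.
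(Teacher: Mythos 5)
Your argument is correct and is precisely the route the paper intends: its own proof is the single sentence ``combine the complex mean value theorem with the formulae relating slice and stem functions,'' and your write-up supplies exactly that combination (slicewise mean value via the Splitting Lemma, isolation of $F_1,F_2$ from $f(a\pm bJ)$ using the stem relations, and the $J\mapsto -J$ averaging). The bookkeeping of the involution, which you flag as the delicate point, checks out in all three places (center, exponential, and the left factor $J$).
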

\begin{proof}
  This follows from combining the complex mean value theorem
  with the formulae relating slice and stem functions.
\end{proof}

\begin{corollary}\label{meanvalue}
Let $f$ be a regular function, $r>0$, $a\in\R$

Then
\[
f(a)=\frac1{2\pi}\int_{\S}\int_0^{2\pi}
f(a+r\cos\theta+r\sin\theta I)
d\theta d\mu(I)
\]
for any probability measure  $\mu$  on $\S$
which is invariant under the involution $J\mapsto -J$.
\end{corollary}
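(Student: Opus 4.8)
The plan is to obtain this as the special case $b=0$ of the General Mean Value Formula (Proposition~\ref{gen-mean}). First I would observe that the hypotheses of that proposition admit $b=0$: taking $a\in\R$, $b=0$, and any $I\in\S$, the closed ball of radius $r$ centered at $a+bI=a$ is precisely the ball over which we wish to integrate, and we assume $f$ is regular on a slice domain containing it. Thus the full strength of Proposition~\ref{gen-mean} is available in this configuration.

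Next I would record the elementary but crucial fact that a stem function carries no imaginary component at a real point. Writing $f=\mathcal{I}(F)$ with $F=F_1+F_2\imath$, Lemma~\ref{stem-basic} gives $F_2(z)=-F_2(\bar z)$; evaluating at the real point $z=a$ forces $F_2(a)=0$. Hence
\[
f(a)=F_1(a)+I\,F_2(a)=F_1(a)\quad\text{for every }I\in\S,
\]
so $f(a)$ equals $F_1(a)$ and is independent of the auxiliary unit $I$.

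Then I would substitute $b=0$ into the second displayed expression for $F_1$ in Proposition~\ref{gen-mean}, namely
\[
F_1(a+bi)=\frac{1}{2\pi}\int_\S\int_0^{2\pi}f(a+bJ+re^{J\theta})\,d\theta\,d\mu(J),
\]
which at $b=0$ collapses to
\[
F_1(a)=\frac{1}{2\pi}\int_\S\int_0^{2\pi}f(a+re^{J\theta})\,d\theta\,d\mu(J).
\]
Combining this with the identity $f(a)=F_1(a)$ from the previous step, rewriting $re^{J\theta}=r\cos\theta+r\sin\theta\,J$ (since $J^2=-1$), and renaming the integration variable $J$ as $I$ yields exactly the asserted formula.

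Since the statement is a corollary of Proposition~\ref{gen-mean}, I do not expect a genuine obstacle. The only point deserving care is the reduction $f(a)=F_1(a)$, i.e.\ verifying that the $I$-dependent contribution $I\,F_2(a)$ drops out at real points; this is precisely what the stem-function symmetry guarantees. The invariance of $\mu$ under $J\mapsto -J$, which underlies the symmetric averaging in Proposition~\ref{gen-mean}, is inherited automatically in this specialization and requires no separate argument.
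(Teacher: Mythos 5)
Your proposal is correct and follows exactly the paper's route: the paper proves this corollary in one line as the special case $b=0$ of Proposition~\ref{gen-mean}. Your additional verification that $F_2(a)=0$ at real points (via the stem-function symmetry from Lemma~\ref{stem-basic}), so that $f(a)=F_1(a)$, is a worthwhile detail the paper leaves implicit.
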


\begin{proof}
  This a special case of Proposition~\ref{gen-mean} with $b=0$.
\end{proof}

\begin{remark}
  Note that in Corollary~\ref{meanvalue} we integrate over the
  sphere with radius $r$ and center $a$, but not with respect to the
  euclidean volume element $dV$ on the $3$-sphere.

  This is crucial.
  
For example, $\int_{||q||=1} q^2dV<0$, hence
$\int_{||q||=1} f(q)dV\ne f(0)$ for $f(q)=q^2$.
\end{remark}

\subsection{Characterization of harmonicity}
A continuous function on $\C$ is harmonic if and only if
it satisfies the mean value property.

We derive a similar criterion in the quaternionic setup.

\begin{theorem}\label{char-harm}
  Let $\mu$ be a probability measure on $\S$ which is
  invariant under the involution $J\mapsto -J$.
  Let $f:\H\to\H$ be a continuous function and for $p=a+bI\in\H$
  and $r>0$ define
  \[
  M_{p,r}=
  \frac{1}{2\pi}
  \int_\S  \int_0^{2\pi}
  (1-IJ)f(a+bJ+re^{J\theta})
  d\theta
  d\mu(J).
  \]

  Then $f$ is harmonic (in the sense of being the
  sum of a regular and an anti-regular function) if and only if
\begin{equation}\label{eq-mr}
  M_{p,r}=f(p)\quad\quad \forall p\in\H, r>0.
\end{equation}
\end{theorem}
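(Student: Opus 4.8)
The plan is to reduce the quaternionic mean value identity for a \emph{slice} function to the classical planar mean value property of the two components of its stem function, and to run both implications through this reduction. The computational core is the following identity, valid for any slice function $f=\mathcal{I}(F)$ with $F=F_1+F_2\imath$: writing $p=a+bI$, $z=a+bi$ and denoting by $\tilde F_j(z,r)=\frac{1}{2\pi}\int_0^{2\pi}F_j(z+re^{i\theta})\,d\theta$ the planar circle average of the $\H$-valued component $F_j$, one has $M_{p,r}=\tilde F_1(z,r)+I\,\tilde F_2(z,r)$. Indeed, as in the proof of Proposition~\ref{gen-mean}, the point $a+bJ+re^{J\theta}$ equals $(a+r\cos\theta)+J(b+r\sin\theta)$, so $f(a+bJ+re^{J\theta})=F_1(z+re^{i\theta})+J\,F_2(z+re^{i\theta})$; integrating in $\theta$ gives $\tilde F_1(z,r)+J\,\tilde F_2(z,r)$, and integrating $(1-IJ)(\tilde F_1+J\tilde F_2)$ against $\mu$ annihilates the $J$-linear terms since $\int_\S J\,d\mu(J)=0$ and $\mu(\S)=1$, leaving $\tilde F_1+I\tilde F_2$. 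Hence for a slice function the identity $M_{p,r}=f(p)$ holds for all $p,r$ if and only if $\tilde F_j(z,r)=F_j(z)$ for all $z,r$ and $j=1,2$, i.e. if and only if $F_1$ and $F_2$ enjoy the planar mean value property.

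This settles the forward implication. If $f=g+h$ with $g=\mathcal{I}(G)$ regular and $h=\mathcal{I}(H)$ anti-regular, then $f=\mathcal{I}(F)$ is a slice function with $F=G+H$; holomorphicity of $G$ and anti-holomorphicity of $H$ make each component $F_1=G_1+H_1$ and $F_2=G_2+H_2$ harmonic, hence possessed of the planar mean value property, so the equivalence above gives $M_{p,r}=f(p)$.

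For the converse, the first task is to show that a continuous $f$ satisfying \eqref{eq-mr} is a slice function at all. Letting $r\to0$ in $M_{p,r}=f(p)$ and using uniform continuity of $f$ on compacta, the $\theta$-average collapses and one obtains $f(a+bI)=\int_\S(1-IJ)f(a+bJ)\,d\mu(J)=C_1(a,b)+I\,C_2(a,b)$, where $C_1(a,b)=\int_\S f(a+bJ)\,d\mu(J)$ and $C_2(a,b)=-\int_\S J\,f(a+bJ)\,d\mu(J)$ do not depend on $I$. The invariance of $\mu$ under $J\mapsto -J$ yields $C_1(a,-b)=C_1(a,b)$ and $C_2(a,-b)=-C_2(a,b)$, so $F_1(a+ib):=C_1(a,b)$ and $F_2(a+ib):=C_2(a,b)$ define a continuous stem function $F$ on $\C$ with $f=\mathcal{I}(F)$ (this is exactly the situation of Lemma~\ref{stem-basic}). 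Thus $f$ is a slice function, and the reduction of the first paragraph applies: from $M_{p,r}=f(p)$ we read off $\tilde F_j(z,r)=F_j(z)$ for all $z\in\C$, $r>0$, $j=1,2$. Taken componentwise, every real component of $F_1$ and $F_2$ is a continuous real function on $\C\cong\R^2$ with the circle mean value property at every radius; by the classical converse to the mean value theorem for harmonic functions it is harmonic, so $F_1$ and $F_2$ are harmonic.

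It remains to split $F$. On the simply connected domain $\C$ the harmonic $\HC$-valued function $F$ decomposes as $F=G+H$ with $G$ holomorphic and $H$ anti-holomorphic: since $\partial F/\partial z$ is holomorphic (its $\bar z$-derivative is $\tfrac14\Delta F=0$), take $G$ a holomorphic primitive of it and $H=F-G$. To respect the reality constraint, apply the involution $\sigma(\Phi)(z)=\overline{\Phi(\bar z)}$, which fixes the stem function $F$ and preserves both holomorphicity and anti-holomorphicity; comparing $F=\sigma(G)+\sigma(H)$ with $F=G+H$ shows $\sigma(G)-G=c$ is a constant, and $\sigma^2=\mathrm{id}$ forces $c+\bar c=0$, so that $G+\tfrac{c}{2}$ and $H-\tfrac{c}{2}$ are respectively a holomorphic and an anti-holomorphic stem function summing to $F$. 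Then $f=\mathcal{I}(G+\tfrac{c}{2})+\mathcal{I}(H-\tfrac{c}{2})$ exhibits $f$ as the sum of a regular and an anti-regular function, as required. The two genuinely delicate points are the bootstrap from continuity and the mean value property to harmonicity, which we have deliberately outsourced to the classical planar theorem, and the arrangement of the holomorphic/anti-holomorphic splitting so as to preserve the stem (reality) condition; everything else is the bookkeeping encoded in the identity $M_{p,r}=\tilde F_1+I\tilde F_2$.
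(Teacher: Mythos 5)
Your proof is correct and follows essentially the same route as the paper's: show that $f$ satisfying \eqref{eq-mr} must be a slice function, reduce the quaternionic mean value identity to the planar mean value property of the stem components $F_1,F_2$, invoke the classical converse of the mean value theorem to obtain harmonicity, and split the harmonic stem function into a holomorphic and an anti-holomorphic part. The only local deviation is that you establish sliceness by letting $r\to 0$ and appealing to Lemma~\ref{stem-basic}, whereas the paper verifies the representation formula directly via the identity $(1-HI)(1-IJ)+(1+HI)(1+IJ)=2(1-HJ)$ under the integral sign; both work, and your explicit normalization of the splitting $F=G+H$ into genuine stem functions (which the paper carries out only in the proof of Theorem~\ref{harm-star}) makes explicit a point the paper leaves implicit here.
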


\begin{proof}
We assume that \eqref{eq-mr} holds.
  
  The function $f$ is a slice function if and only if it satisfies the
  representation formula.
  Hence $f$ is a slice function iff
  \[
  f(a+bH)=\frac{1-HI}2 M_{p,r}+\frac{1+HI}2 M_{p^c,r}
  \]
  for all $a,b\in\R$, $H,I\in\S$ and  $p=a+bI$.
  
  This can be verified by explicit calculation:
  \begin{align*}
    & \frac{1-HI}2 M_{p,r}+\frac{1+HI}2 M_{p^c,r}\\
    & = 
  \frac{1}{4\pi}
  \int_\S  \int_0^{2\pi}
  \left((1-HI)(1-IJ)+(1+HI)(1+IJ)\right)
  f(a+bJ+re^{J\theta})d\theta d\mu(J)\\
    & = 
  \frac{1}{2\pi}
  \int_\S  \int_0^{2\pi}
  \left( 1-HJ \right)
  f(a+bJ+re^{J\theta})d\theta d\mu(J)\\
  &= M_{a+bH,r}=f(a+bH).\\
  \end{align*}

  Thus $f$ is a slice function induced by some stem function $F$.
  This stem function can be easily determined as $F=F_1+F_2\tensor i$ with
  \begin{align*}
    F_1(a+bi) &
=      \frac{1}{2\pi}
  \int_\S  \int_0^{2\pi}
  f(a+bJ+re^{J\theta})d\theta d\mu(J)\\
    F_2(a+bi) &
=     - \frac{1}{2\pi}
  \int_\S  \int_0^{2\pi}
  Jf(a+bJ+re^{J\theta})d\theta d\mu(J)\\
  \end{align*}
  Since $\mu$ is invariant under $J\mapsto -J$, we have
  \begin{align*}
    F_1(a+bi) &
=      \frac{1}{4\pi}
  \int_\S  \int_0^{2\pi}
  f(a+bJ+re^{J\theta}) + f(a-bJ+re^{-J\theta})
  d\theta d\mu(J)\\
&    = \frac{1}{2\pi}\int_0^{2\pi} F_1(a+bi+re^{i\theta}) d \theta.\\ 
  \end{align*}
  Thus $F_1$ satisfies the ordinary mean value property for functions
  defined on $\C$ and therefore must be harmonic. Similar arguments
  apply to $F_2$.
  As a result we see that $F$ is the sum of a holomorphic
  and an antiholomorphic function from $\C$ to $\H\tensor_{\R}\C$ and
  consequently $f:\H\to\H$ is the sum of a regular and an anti-regular
  function.

  For the opposite direction, assume that $f$ is the sum of a regular
  function and an anti-regular function.
  Then \eqref{eq-mr} follows immediately from
  Proposition~\ref{gen-mean}.
  \end{proof}

\section{Generalized Representation Formula}
For slice regular functions, the formula below
already appeared in \cite{CGSS}: see Theorem 3.2.
Here we give a new proof and we deduce some consequences.

\begin{proposition}\label{gen-rep-formel}
  Let $f$ be a slice function (not necessarily regular)
  and let $I,J,H\in\S$
(not necessarily orthogonal). Assume that $J\ne I$, $H\ne I$.
Then the following equality holds:
\begin{gather*}
 \left( \frac{1+JI}2 \right)^{-1}
f(x+yJ) - \left(\frac{1+HI}2\right)^{-1}f(x+yH) \\
=
\left( \left( \frac{1+JI}2 \right)^{-1}
\left( \frac{1-JI}2 \right) 
- \left(\frac{1+HI}2\right)^{-1}
\left(\frac{1-HI}2\right)\right) f(x+yI)
\end{gather*}
\end{proposition}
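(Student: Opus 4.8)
The plan is to derive both sides directly from the representation formula, which for a slice function $f$ asserts that
\[
f(x+yK)=\frac{1-KI}2 f(x+yI)+\frac{1+KI}2 f(x-yI)
\]
for every $K\in\S$. First I would record that the coefficients $\frac{1+JI}2$ and $\frac{1+HI}2$ are invertible. Since $\H$ is a division algebra, it suffices to check $1+JI\ne 0$; but $JI=-1$ forces $J=I$ (multiply on the right by $I^{-1}=-I$), which contradicts the hypothesis $J\ne I$, and the same reasoning applies to $H\ne I$.

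Next I would apply the representation formula twice, once with $K=J$ and once with $K=H$, and then multiply each identity on the \emph{left} by the inverse of its leading coefficient $\frac{1+JI}2$, respectively $\frac{1+HI}2$. Because every quaternionic factor in the representation formula acts on the left of the function values, this left multiplication is unambiguous; it converts the term $\frac{1+JI}2 f(x-yI)$ into exactly $f(x-yI)$, and similarly for $H$.

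The crux is then simply to subtract the two resulting identities. The now-bare $f(x-yI)$ terms cancel, leaving precisely
\[
\left(\frac{1+JI}2\right)^{-1} f(x+yJ) - \left(\frac{1+HI}2\right)^{-1} f(x+yH) = \left(\left(\frac{1+JI}2\right)^{-1}\frac{1-JI}2 - \left(\frac{1+HI}2\right)^{-1}\frac{1-HI}2\right) f(x+yI),
\]
which is the claimed formula.

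I do not anticipate any genuine obstacle: the only point demanding care is the invertibility of the two coefficients (and hence the implicit necessity of the hypotheses $J\ne I$ and $H\ne I$). The structural observation that does the real work is that after left-multiplying by the inverses, the contribution of $f(x-yI)$ becomes identical in both identities; this common term from the shared reference slice $\C_I$ is what makes the two non-orthogonal directions $J$ and $H$ comparable and produces the stated cancellation.
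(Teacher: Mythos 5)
Your proof is correct and follows essentially the same route as the paper's: both instantiate the representation formula at $J$ and at $H$ and take the left linear combination that cancels the common $f(x-yI)$ term (the paper merely performs this in two steps, first clearing one coefficient and then the other, while you normalize both at once). Your explicit check that $1+JI\ne 0$ forces $J\ne I$ is a detail the paper leaves implicit, and it is argued correctly.
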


\begin{proof}
We have
\[
f(x+yJ)=\frac{1-JI}2f(x+yI)+ \frac{1+JI}2f(x-yI)
\]
and
\[
f(x+yH)=\frac{1-HI}2f(x+yI)+ \frac{1+HI}2f(x-yI).
\]
A linear combination of both equations yields:
\begin{gather*}
\left(\frac{1+HI}2\right) \left( \frac{1+JI}2 \right)^{-1}
f(x+yJ) - f(x+yH) \\
=
\left(\left(\frac{1+HI}2\right) \left( \frac{1+JI}2 \right)^{-1}
\left( \frac{1-JI}2 \right) 
- \left(\frac{1-HI}2\right)\right) f(x+yI)
\end{gather*}
and
\begin{gather*}
 \left( \frac{1+JI}2 \right)^{-1}
f(x+yJ) - \left(\frac{1+HI}2\right)^{-1}f(x+yH) \\
=
\left( \left( \frac{1+JI}2 \right)^{-1}
\left( \frac{1-JI}2 \right) 
- \left(\frac{1+HI}2\right)^{-1}
\left( \frac{1-HI}2\right)\right) f(x+yI)
\end{gather*}
\end{proof}

\begin{lemma}\label{lem-inj}
Fix $I\in\S$. For $J\ne I$ we define 
\[
R(J)=\left( \frac{1+JI}2 \right)^{-1}
\left( \frac{1-JI}2 \right)
\]

Then $R:\S\setminus\{I\}\to\H $ is injective, and
$R(J)=0$ iff $J=-I$.

Furthermore $\lim_{J\to I}|R(J)| = +\infty$.
\end{lemma}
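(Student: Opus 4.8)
The plan is to recognize $R$ as a Cayley-type transform precomposed with the right translation $J\mapsto JI$. Concretely, set $w=JI$ and $\phi(w)=\left(\frac{1+w}{2}\right)^{-1}\left(\frac{1-w}{2}\right)=(1+w)^{-1}(1-w)$, so that $R(J)=\phi(JI)$. Since $I$ is invertible, the map $J\mapsto JI$ is a bijection of $\H$, so it suffices to prove that $\phi$ is injective where it is defined. First I would record exactly when $\frac{1+JI}{2}$ is invertible: using $I^{c}=-I$, $J^{c}=-J$ one computes $(1+JI)^{c}=1+IJ$ and, since $(JI)(IJ)=J I^{2} J = 1$ and $IJ+JI=-2\langle I,J\rangle$ (with $\langle\cdot,\cdot\rangle$ the Euclidean scalar product on $\Im\H$), that
\[
|1+JI|^{2}=(1+JI)(1+IJ)=2+IJ+JI=2\big(1-\langle I,J\rangle\big),
\]
which is strictly positive precisely when $J\ne I$. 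Thus $R$ is well-defined on $\S\setminus\{I\}$, the first factor failing to be invertible only in the excluded limiting case $J=I$.

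For injectivity the key identity is
\[
1+R(J)=(1+JI)^{-1}\big[(1+JI)+(1-JI)\big]=2(1+JI)^{-1},
\]
which is invertible for every $J\ne I$. This lets me invert the transform explicitly: from $v=(1+w)^{-1}(1-w)$ I get $(1+w)v=1-w$, hence $w(1+v)=1-v$, and therefore $w=(1-v)(1+v)^{-1}$ whenever $1+v$ is invertible. Applying this with $v=R(J)$ recovers $JI=(1-R(J))(1+R(J))^{-1}$, and then $J=-(JI)I$. Hence $R(J)$ determines $J$, proving injectivity.

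The remaining two assertions are then immediate. Since the first factor $\frac{1+JI}{2}$ is invertible, $R(J)=0$ holds iff $\frac{1-JI}{2}=0$, i.e.\ $JI=1$, i.e.\ $J=I^{-1}=-I$. For the limit I would use multiplicativity of the quaternionic norm together with the companion computation $|1-JI|^{2}=2(1+\langle I,J\rangle)$, giving
\[
|R(J)|^{2}=\frac{|1-JI|^{2}}{|1+JI|^{2}}=\frac{1+\langle I,J\rangle}{1-\langle I,J\rangle}.
\]
As $J\to I$ we have $\langle I,J\rangle\to 1$, so the denominator tends to $0$ while the numerator tends to $2$, and therefore $|R(J)|\to+\infty$.

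I expect the only genuine subtlety to be the injectivity step: the tempting but messy route is a coordinate computation, whereas the clean argument is the noncommutative Cayley inversion above, whose validity hinges precisely on the identity $1+R(J)=2(1+JI)^{-1}$ guaranteeing that $1+v$ is invertible exactly where needed. Everything else reduces to the two norm computations $|1\pm JI|^{2}=2(1\mp\langle I,J\rangle)$.
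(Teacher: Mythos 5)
Your proof is correct, and your injectivity argument is genuinely different from the one in the paper. The paper first simplifies $R(J)$ explicitly to $\frac{IJ-JI}{|1+JI|^2}$, i.e.\ a positive real multiple of the vector part of $IJ$, then shows that $r\mapsto |R(J)|^2=\frac{1-r}{1+r}$ is injective in $r=\Re(IJ)$ on $(-1,1]$; from $R(J)=R(H)$ it recovers first the real parts and then the vector parts of $IJ$ and $IH$, concluding $IJ=IH$ and hence $J=H$. You instead treat $R$ as a noncommutative Cayley transform $\phi(w)=(1+w)^{-1}(1-w)$ in $w=JI$ and invert it algebraically via $w=(1-v)(1+v)^{-1}$, the crucial point being your identity $1+R(J)=2(1+JI)^{-1}$, which guarantees $1+v$ is invertible exactly where needed. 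Your route is cleaner and more conceptual: it gives an explicit inverse map rather than an injectivity-by-cases argument, and it isolates the one real hypothesis (invertibility of $1+JI$, i.e.\ $J\ne I$). What the paper's computation buys in exchange is the closed form $|R(J)|^2=\frac{1-r}{1+r}$, from which the zero locus and the blow-up as $J\to I$ drop out simultaneously with injectivity; you recover the same formula (in the equivalent form $\frac{1+\langle I,J\rangle}{1-\langle I,J\rangle}$, since $\Re(IJ)=-\langle I,J\rangle$) but need the two separate norm identities $|1\pm JI|^2=2(1\mp\langle I,J\rangle)$ to do so. Both proofs are complete; yours would also transfer more readily to other normed division or $*$-algebras where the same Cayley inversion makes sense.
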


\begin{proof}
Since $I,J$ are purely imaginary, we have $\overline{JI}=IJ$.
Therefore
\[
\left( \frac{1+JI}2 \right)^{-1}=2\frac{1+IJ}{|1+JI|^2}
\]
and therefore
\[
R(J)= \frac{(1+IJ)(1-JI)}{|1+JI|^2}
=\frac{IJ-JI}{|1+JI|^2}
\]
Since $\overline{IJ}=JI$, $\frac{IJ-JI}{2}=\frac{IJ-\overline{IJ}}{2}$
denotes the vector part of $IJ$.
Define $r=\Re  \, (IJ)$. Observe that $r\in]-1,+1]$.
Using $|IJ|=1$ we know that the vector part of $IJ$ has norm
$\sqrt{1-r^2}$.
Therefore
\[
|R(J)|^2= 4\frac{1-r^2}{|1+JI|^4}
\]
Now $|1+JI|^2=|1+\Re  \, (JI)|^2+|\Im  \, (JI)|^2$ and
therefore $|1+JI|^4=(2+2r)^2$ implying
\[
|R(J)|^2=4\frac{1-r^2}{(2+2r)^2}=\frac{(1+r)(1-r)}{(1+r)^2}
=\frac{1-r}{(1+r)}=\left( -1 +\frac{2}{1+r}\right).
\]
We observe that 
the map
\[
r\mapsto \left( -1 +\frac{2}{1+r}\right)
\]
is evidently an injective map from $(-1, +1]$ to $\mathbb{R}^+$.

As a consequence, we obtain: If $|R(J)|=|R(H)|$ for some $J,H\in\S \setminus \{ I \} $,
then $\Re  \, (IJ)=\Re  \, (HI)$. 
On the other hand, the vector part of $IJ$ equals
\[
\frac{1}{2}\left(IJ-\overline{IJ}\right)
=\frac12 R(J) |1+JI|^2 =  R(J) (1+r)
\]
because $|1+JI|^2=(2+2r).$
Hence also the vector parts of $JI$ and $HI$ have to agree
as soon as $R(J)=R(H)$. Finally observe that $J=H$ if $JI=HI$.
\end{proof}

\begin{proposition}\label{gen-repr}
Fix $I\in\S$.
Then there exists a continuous map $M=(M_1,M_2):\S\times\S\setminus
D_\S\to\H \times\H $ (where $D_\S$ denotes the diagonal, i.e.,
$D_\S=\{(q,q):q\in\S\}$)
such that
\begin{equation} \label{repgenfor}
f(x+yI)=M_1(J,H)f(x+yJ) + M_2(J,H) f(x+yH) \ \forall J,H\in\S
\end{equation}
for every regular function $f$.
\end{proposition}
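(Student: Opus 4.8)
The plan is to solve the generalized representation formula of Proposition~\ref{gen-rep-formel} for the value $f(x+yI)$, regarding $f(x+yJ)$ and $f(x+yH)$ as the data. Abbreviating $A_J=\frac{1+JI}2$ and keeping the notation $R(J)=A_J^{-1}\frac{1-JI}2$ from Lemma~\ref{lem-inj}, that formula reads
\[
A_J^{-1}f(x+yJ)-A_H^{-1}f(x+yH)=\bigl(R(J)-R(H)\bigr)f(x+yI)
\]
for all $J,H\in\S\setminus\{I\}$. Wherever $R(J)-R(H)$ is an invertible quaternion I left-multiply by its inverse and read off the asserted shape with
\[
M_1(J,H)=(R(J)-R(H))^{-1}A_J^{-1},\qquad M_2(J,H)=-(R(J)-R(H))^{-1}A_H^{-1}.
\]

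First I would check the invertibility: for $J,H\in\S\setminus\{I\}$ with $J\ne H$, the injectivity in Lemma~\ref{lem-inj} gives $R(J)\ne R(H)$, so $R(J)-R(H)$ is a nonzero quaternion and hence invertible in $\H$. Since $J\mapsto R(J)$, $J\mapsto A_J^{-1}$ and quaternionic inversion are continuous on their domains, $M_1,M_2$ are continuous on the open set $\{(J,H):J,H\ne I,\ J\ne H\}$. To ease the limit analysis I would record the identity $A_J^{-1}=1+R(J)$ (immediate from $\frac{1-JI}2=1-A_J$), giving
\[
M_1(J,H)=(R(J)-R(H))^{-1}(1+R(J)),\qquad M_2(J,H)=-(R(J)-R(H))^{-1}(1+R(H)),
\]
and in particular $M_1+M_2\equiv 1$ (as it must, by testing the identity on a constant $f$).

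The main obstacle is that the domain $\S\times\S\setminus D_\S$ also contains pairs with $J=I$ or $H=I$, where both inversions above degenerate, and there I must extend $M$ continuously. Here I would exploit $\lim_{J\to I}|R(J)|=+\infty$ from Lemma~\ref{lem-inj} together with the rewriting $M_1=1+(R(J)-R(H))^{-1}(1+R(H))$. As $(J,H)\to(I,H_0)$ with $H_0\ne I$, the factor $(R(J)-R(H))^{-1}\to 0$ while $1+R(H)$ stays bounded, so $M_1\to 1$ and $M_2=1-M_1\to 0$; symmetrically, using the form $M_1=(R(J)-R(H))^{-1}(1+R(J))$, as $(J,H)\to(J_0,I)$ with $J_0\ne I$ one finds $M_1\to 0$ and $M_2\to 1$. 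Hence setting $M_1(I,H)=1,\ M_2(I,H)=0$ and $M_1(J,I)=0,\ M_2(J,I)=1$ yields a continuous $M$ on all of $\S\times\S\setminus D_\S$, and at these added boundary points the asserted identity reduces to the tautology $f(x+yI)=f(x+yI)$. Finally I would note that the whole argument uses only the representation formula, which is valid for every slice function, so the conclusion indeed holds for every regular $f$.
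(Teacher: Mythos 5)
Your proposal is correct and follows essentially the same route as the paper: solve Proposition~\ref{gen-rep-formel} for $f(x+yI)$, use the injectivity of $R$ from Lemma~\ref{lem-inj} to invert $R(J)-R(H)$, and extend continuously to $J=I$ or $H=I$ using $\lim_{J\to I}|R(J)|=+\infty$. Your limit computation via the identity $A_J^{-1}=1+R(J)$ (and the check $M_1+M_2\equiv 1$) is a slightly cleaner packaging of the same calculation the paper carries out directly.
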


\begin{proof}

First assume that $I,J,H$ are pairwise distinct.

Then the statement follows from Proposition \ref{gen-rep-formel}
with
\[
M_1(J,H)=\left( R(J)-R(H) \right)^{-1} \left( \frac{1+JI}2 \right)^{-1}
\]
and
\[
M_2(J,H)=- \left( R(J)-R(H) \right)^{-1} \left( \frac{1+HI}2 \right)^{-1}.
\]
(Note that $1+JI\ne 0$, resp.~$1+HI\ne 0$, because of our assumptions
$J\ne I$, $H\ne I$. Note further that $R(J)-R(H)\ne 0$
due to $J\ne H$ and the injectivity statement of Lemma \ref{lem-inj}.)

Next we claim that the functions $M_i$ do extend continuously
to the points where $J=I$ or $H=I$, i.e., extend continuously to all
of $\S\times\S\setminus
D_\S$.

Consider the case where $J$ approaches $I$. Since
we excluded the diagonal $D_\S$, we may fix $H\ne I$.

Now
\begin{align*}
M_1(J,H) &=\left( R(J)-R(H) \right)^{-1} \left( \frac{1+JI}2 \right)^{-1}\\
&= \left( \frac{1+JI}2  \left(  R(J)-R(H) \right)  \right)^{-1}\\
&= \left( \frac{1+JI}2  R(J) - \frac{1+JI}2R(H)   \right)^{-1}\\
&= \left( \frac{1-JI}2 - \frac{1+JI}2R(H) \right)^{-1} \\
\end{align*}
implying
\[
\lim_{J\to I} M_1(J,H)
= \left( \frac{2}2 - 0\cdot R(H) \right)^{-1} =1
\]
In a similar way one proves
\[
\lim_{J\to I} M_2(J,H)=0
\]
and
the analog statement for $H\to I$.
\end{proof}

\begin{remark}
We observe that our formula \eqref{repgenfor} coincides with the Representation Formula of Proposition 6 in \cite{ghiloniperotti},
when $M_1(J,H)=(I-H)(J-H)^{-1}$ and $M_2(J,H)=-(I-J)(J-H)^{-1}.$
\end{remark}

\section{Rotations}
\label{sect-rot}
For every $w\in\H ^*$ let $S_w:\H \to\H $ denote the map given by
$S_w(q)=w^{-1}qw$. This is an orthogonal transformation of $\mathbb{R}^4$ which fixes
$\mathbb{R}$ pointwise.
Observe that $S_w^{-1}=S_{w^{-1}}$.

\begin{lemma}
Let $I,J,K$ be orthogonal imaginary units.

Then $S_I:\H \to\H $ is a linear map acting as $id$
on $\C_I=\left<1,I\right>_\mathbb{R}$
and as $-id$ on $\C_I^\perp= \left<J,K\right>_\mathbb{R}$.
\end{lemma}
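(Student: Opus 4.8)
The plan is to use the $\R$-linearity of $S_I$ to reduce the claim to a computation on the orthonormal basis $\{1,I,J,K\}$ of $\H$. Since quaternionic multiplication is $\R$-bilinear, the map $q\mapsto I^{-1}qI$ is $\R$-linear, so it suffices to evaluate $S_I$ on each basis vector and verify that $1$ and $I$ are fixed while $J$ and $K$ are sent to their negatives; this is exactly the assertion that $S_I=\mathrm{id}$ on $\C_I$ and $S_I=-\mathrm{id}$ on $\C_I^\perp$.

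First I would record that $I\in\S$ gives $I^2=-1$ and hence $I^{-1}=-I$. Then $S_I(1)=I^{-1}I=1$ and $S_I(I)=I^{-1}I^2=-I^{-1}=I$, so $S_I$ is the identity on $\C_I=\left<1,I\right>_\R$.

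The only ingredient that does real work is the anticommutation of orthogonal imaginary units. For purely imaginary quaternions $u,v$ the scalar part of $uv$ equals $-\langle u,v\rangle$ while the vector part is antisymmetric in $u,v$, so $uv+vu=-2\langle u,v\rangle$. Applying this to the orthogonal pairs $(I,J)$ and $(I,K)$ yields $IJ=-JI$ and $IK=-KI$. Hence $S_I(J)=I^{-1}JI=(-I)JI=(JI)I=JI^2=-J$, and identically $S_I(K)=-K$, so $S_I$ acts as $-\mathrm{id}$ on $\C_I^\perp=\left<J,K\right>_\R$. I expect no genuine obstacle: the single nontrivial point is the relation $IJ=-JI$, a standard consequence of the inner-product formula for the scalar part of a quaternionic product, and everything else is bookkeeping on the basis.
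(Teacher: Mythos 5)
Your proof is correct and is essentially the paper's own argument: the paper simply states that the lemma ``follows easily from explicit calculations,'' and your basis computation using $I^{-1}=-I$ together with the anticommutation $IJ=-JI$, $IK=-KI$ is exactly the calculation being alluded to. Nothing further is needed.
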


\begin{proof}
  Follows easily from explicit calculations.
\end{proof}

  The following lemma is a well known result, see for example
  \cite{GHS} Prop. 2.22, page 28), but for the reader convenience
  we prefer to give here our own proof.

\begin{lemma}\label{generate-so}
The group of all orientation preserving orthogonal transformations
of $\left<I,J,K\right>_\mathbb{R}$ is generated by the
transformations $S_w$ with $w\in\S$.
\end{lemma}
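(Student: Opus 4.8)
The plan is to identify each map $S_w$ (for $w\in\S$) as a \emph{half-turn} of the three-dimensional space $V=\langle I,J,K\rangle_{\R}$ of imaginary quaternions, and then to prove that half-turns generate $SO(V)\cong SO(3)$ by composing them in pairs. First I would apply the preceding lemma with the imaginary unit $w$ in the role of $I$: it shows $S_w$ acts as the identity on $\C_w=\langle 1,w\rangle_{\R}$ and as $-\mathrm{id}$ on $\C_w^{\perp}$. Restricting to $V$, this says $S_w$ fixes the axis $\R w$ and negates the orthogonal plane $w^{\perp}\cap V$; hence $S_w|_V$ is the rotation through angle $\pi$ about $w$, and in particular lies in $SO(V)$ (its determinant on $V$ is $(+1)(-1)(-1)=+1$). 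So each generator is an orientation-preserving involution.

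Next I would compute the composite of two such half-turns. From $S_{w_1}\circ S_{w_2}(q)=w_1^{-1}w_2^{-1}q\,w_2 w_1=(w_2 w_1)^{-1}q\,(w_2 w_1)$ one reads off $S_{w_1}\circ S_{w_2}=S_{w_2 w_1}$. For $w_1,w_2\in\S$ making an angle $\phi$, the quaternion product is $w_2 w_1=-\cos\phi+w_2\times w_1$, a \emph{unit} quaternion with real part $-\cos\phi$ and imaginary part $w_2\times w_1$ of norm $\sin\phi$. Writing $w_2 w_1=\cos\psi+\hat m\,\sin\psi$ with $\psi=\pi-\phi$ and $\hat m=(w_2\times w_1)/\sin\phi\in\S$, a short direct computation (decomposing $V=\R\hat m\oplus(\hat m^{\perp}\cap V)$, on which conjugation by $w_2 w_1$ acts as the identity, respectively as a planar rotation) shows that $S_{w_2 w_1}$ is a rotation of $V$ about the axis $\R\hat m$ through the angle $\pm 2\psi$, the sign depending on the orientation convention in $S_w(q)=w^{-1}qw$.

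Finally, given an arbitrary element of $SO(V)$, say the rotation through an angle $\theta$ about a unit axis $n$, I would choose $w_1,w_2\in\S$ lying in the plane $n^{\perp}\cap V$, at the angle $\phi=\pi-\theta/2$, and labelled so that $w_2\times w_1$ points along $n$; by the previous step $S_{w_2 w_1}$ is then exactly the prescribed rotation. As $\theta$ runs over $(0,2\pi)$ the required $\phi$ runs over $(0,\pi)$, so every non-trivial rotation is a product of two generators, while the identity is $S_w\circ S_w=S_{w^2}=S_{-1}$. This exhibits every element of $SO(V)$ as a product of at most two transformations $S_w$ with $w\in\S$, which proves the lemma.

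The main obstacle is the bookkeeping in the middle step: one must pin down both the axis and the exact angle of the composite, control the sign coming from the convention $S_w(q)=w^{-1}qw$, and check that as $w_1,w_2$ vary the composite sweeps out every axis together with every rotation angle in $(0,2\pi)$. As a cleaner alternative to this explicit computation, one could argue abstractly: the subgroup $G\subseteq SO(V)$ generated by the half-turns is normal, since $\rho\,S_w\,\rho^{-1}=S_{\rho(w)}$ for $\rho\in SO(V)$ and $\rho(w)\in\S$; as $G\neq\{\mathrm{id}\}$ and $SO(3)$ is simple as an abstract group, this forces $G=SO(V)$.
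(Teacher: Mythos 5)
Your proof is correct, but it follows a genuinely different route from the paper's. The paper argues via Lie theory: the subgroup $\Sigma$ generated by the $S_w$ is written as the increasing union of the connected sets $\Sigma_k$ of $2k$-fold products, hence is a connected subgroup of $SO(3,\R)$; it is non-commutative (e.g. $S_I$ and $S_{(I+J)/\sqrt2}$ do not commute); and the only non-commutative connected subgroup of $SO(3,\R)$ is $SO(3,\R)$ itself. Your main argument is instead elementary and constructive: you identify each $S_w|_V$ as the half-turn about the axis $\R w$ (using the preceding lemma and checking $\det = +1$), compute $S_{w_1}\circ S_{w_2}=S_{w_2w_1}$ with $w_2w_1=-\cos\phi+\hat m\sin\phi$, and conclude that the composite is the rotation about $\R\hat m$ through $\pm 2(\pi-\phi)$, so that every element of $SO(V)$ is a product of \emph{exactly two} generators --- a sharper conclusion than mere generation, obtained without any appeal to the classification of connected subgroups (which, in the paper's version, tacitly requires knowing that an arcwise connected subgroup is a Lie subgroup). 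The price is the angle/axis bookkeeping you acknowledge, though your computation of it is sound. Your alternative argument (the set of half-turns is closed under conjugation by $SO(V)$ since $\rho S_w\rho^{-1}=S_{\rho(w)}$, so the generated subgroup is normal and non-trivial, and $SO(3)$ is simple) is also valid and is perhaps the closest in spirit to the paper's "soft" argument, trading the classification of connected subgroups for the simplicity of $SO(3)$. Either of your two versions would serve as a complete proof.
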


\begin{proof}
The group is $SO(3,\mathbb{R})$. 
For each $k\in\N,$ let $\Sigma_k$ denote the
set of all $S_{w_1}\circ\ldots\circ S_{w_{2k}}$.
Then $\Sigma=\cup_k\Sigma_k$ is the group generated by all the $S_w$.
($\Sigma$ is evidently a semigroup and in fact a group, because
$(S_w)^{-1}=S_{w^{-1}}$.)
$\Sigma$ is connected, because each $\Sigma_k$ is connected and $\Sigma_{k} \subseteq \Sigma_{k+1}.$
On the other hand,
it is not commutative, since e.g.~$S_I$ and $S_{(I+J)/\sqrt 2}$
do not commute.
However, by standard Lie theory, $SO(3,\mathbb{R})$ has no non-commutative
connected subgroups except $SO(3,\mathbb{R})$ itself.
\end{proof}

\begin{lemma}\label{aver-sw}
Let $q\in\H$.
Let $\mu$ denote the (unique) probability measure on $\S$ which is
invariant under all rotations.

Then
\[
\Re  \, (q)=\int_{\S} S_w(q)d\mu(w)
\]
\end{lemma}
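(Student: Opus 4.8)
The plan is to split off the real part and use that each $S_w$ fixes $\R$ pointwise. Writing $q=\Re(q)+\Im(q)$, we have $S_w(q)=\Re(q)+S_w(\Im(q))$ for every $w\in\S$, and since $\mu$ is a probability measure the claim reduces to the purely imaginary statement $\int_\S S_w(v)\,d\mu(w)=0$ for every $v\in\langle i,j,k\rangle_\R$. So from the start I would regard $T\colon v\mapsto\int_\S S_w(v)\,d\mu(w)$ as a linear endomorphism of the three-dimensional imaginary space and try to understand it.

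Next I would bring in the symmetry carried by $\mu$. By Lemma~\ref{generate-so} the maps $S_w$ generate $SO(3)$, and $\mu$ is invariant under this group; moreover the relation $S_u\circ S_w=S_{wu}$ yields the conjugation identity $S_{\rho(w)}=\rho\circ S_w\circ\rho^{-1}$ for $\rho\in SO(3)$. Combining this with the change of variables $w\mapsto\rho(w)$ under the $\rho$-invariant $\mu$ gives $\rho\circ T=T\circ\rho$ for all $\rho\in SO(3)$; that is, $T$ commutes with the standard representation. Here lies the conceptual subtlety I would be careful about: the average transforms \emph{covariantly} with its argument (it is $SO(3)$-equivariant), so one cannot simply declare its imaginary part to be a rotation-fixed vector and hence zero. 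Instead, irreducibility of $\R^3$ and Schur's lemma give only that $T=\lambda\cdot\mathrm{id}$ for a single real scalar $\lambda$.

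The remaining, and in my view decisive, step is to determine $\lambda$, and I expect this quantitative computation to be the main obstacle. The convenient tool is the explicit description of $S_w$ as the rotation by $\pi$ about the axis $w$, i.e.\ $S_w(v)=2(v\cdot w)\,w-v$ for imaginary $v$, which follows from $S_w(q)=w^{-1}qw$ together with the lemma preceding this one. Inserting this into $T$ reduces the whole question to the second moment $\int_\S w\otimes w\,d\mu(w)$, which by the rotational symmetry of $\mu$ is itself a scalar multiple of the identity and is pinned down by taking a trace (each $S_w$ restricted to the imaginary space is a $\pi$-rotation). Carrying out this computation and checking that the outcome agrees with the asserted right-hand side $\Re(q)$ is the delicate quantitative heart of the argument; once $\lambda$ is known, the general formula follows by linearity together with the real/imaginary splitting of the first step.
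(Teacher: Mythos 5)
Your plan is the right one, and it diverges from the paper's proof at exactly the point you single out. The paper's argument asserts that $S_v(H(q))=H(q)$ for every $v\in\S$, where $H(q)=\int_\S S_w(q)\,d\mu(w)$, invoking the invariance of $\mu$, and from this concludes $H(q)\in\R$. But the relevant conjugation identity is $S_v\circ S_w=S_{S_v(w)}\circ S_v$, so the substitution $w\mapsto S_v(w)$ under the rotation-invariant measure $\mu$ yields only the equivariance $S_v(H(q))=H(S_v(q))$ --- precisely the invariance/equivariance confusion you warn against. Your reduction via Schur's lemma to a single scalar $\lambda$ such that the restriction of $H$ to the imaginary subspace $V=\langle I,J,K\rangle_\R$ equals $\lambda\,\mathrm{id}$ is therefore the correct route, and that scalar genuinely has to be computed; the paper's shortcut to $\lambda=0$ is not available.

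Carrying out the computation, however, does not give $\lambda=0$. Each $S_w$ restricted to $V$ is the rotation by $\pi$ about $w$, so $\mathrm{tr}(S_w|_V)=-1$ for every $w\in\S$, whence $3\lambda=\int_\S \mathrm{tr}(S_w|_V)\,d\mu(w)=-1$ and $\lambda=-\frac13$; equivalently, $S_w(v)=2\langle v,w\rangle w-v$ together with $\int_\S w\otimes w\,d\mu(w)=\frac13\,\mathrm{id}$ gives $H(v)=\frac23v-v=-\frac13v$. Thus
\[
\int_\S S_w(q)\,d\mu(w)=\Re(q)-\tfrac13\Im(q),
\]
which can be checked directly: for $w=ai+bj+ck$ one has $S_w(i)=(2a^2-1)i+2abj+2ack$, and $\int_\S a^2\,d\mu=\frac13$, $\int_\S ab\,d\mu=\int_\S ac\,d\mu=0$ give $\int_\S S_w(i)\,d\mu(w)=-\frac13\,i\ne 0=\Re(i)$. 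So the statement as printed is false, your hesitation about whether ``the outcome agrees with the asserted right-hand side'' is fully justified, and a correct version must replace $\Re(q)$ by $\Re(q)-\frac13\Im(q)$ (with corresponding changes wherever the lemma is invoked, e.g.\ in the identification of $\int_\S R_wf\,d\mu(w)$ with $\frac12(f+f^c)$).
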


\begin{proof}
The map $H:q\mapsto \int_{\S} S_w(q)d\mu(w)$ is $\R$-linear.

Let $v\in\S$. Then $S_v$ is an orthogonal transformation.
Due to the invariance of the measure $\mu$, we have:
\[
S_v(H(q))
=\int_{\S} S_v\left(S_w(q)\right)d\mu(w)
=\int_{\S} S_w(q) S_v^*d\mu(w)=\int_{\S} S_w(q)d\mu(w)=H(q).
\]
(Here $S_v^*$ denotes the pull-back by the map $S_v$.)
 
It follows that 
\[
H(q)\in\{x\in\H:S_w(x)=x ,\,\, \forall w \in \mathbb{S} \}=\R\, ,  \,\, \forall q\in\H.
\]
We observe that $\int_{\S} S_w(q)d\mu(w)=q, \,\, \forall q\in\R$,
because $S_w(q)=q, \,\,  \forall q\in\R, w\in\S$.

Now let $q$ be in the orthogonal complement of $\R$, i.e., in the
real vector subspace $V$ of $\H$ spanned by $I,J,K$.
Since the integral is linear, and $V$ is stabilized by every $S_w$
($w\in\S$)
it follows that
\[
H(q)=\int_{\S} S_w(q)d\mu(w)\in V , \,\,\, \forall  q\in V.
\]
Combined with the fact $H(q)\in\R,  \,\, \forall q\in\H$,
we obtain
\[
H(q)=\int_{\S} S_w(q)d\mu(w)\in V\cap\R=\{0\}, \,\,\,  \forall q\in V.
\]
Thus 
\[
\Re  \, (q)=\int_{\S} S_w(q)d\mu(w)
\]
for every $q\in \R$ and every $q\in V$. By $\R$-linearity of the map $H$,
it follows that this equality holds for all $q\in\H$.
\end{proof}

\begin{definition}\label{def-rot}
A function $f:\H\to\H$ is called
{\em ``rotationally invariant''} if it is invariant under all 
orthogonal transformations of the space of imaginary elements.
\end{definition}

\begin{remark} This class of functions has been studied in
  \cite{GMP} where they are called {\em ``circular''} functions.
\end{remark}

\begin{lemma}\label{LemRotInv}
For a function $f:\H\to\H$ the following properties are
equivalent:
\begin{enumerate}
\item
$f$ is {\em rotationally invariant}.
\item
$f(q)=f(S_wq)$ for all $q\in\H$, $w\in\S$.
\item
$f(x+yI)=f(x+yJ)$ for all $x,y\in\R$, $I,J\in\S$.
\item
$f$ is induced by a stem function $F$ with $F(\C)\subset\H\tensor\R$,
i.e., $F_2=0$ for $F=F_1+F_2\imath $.
\end{enumerate}
\end{lemma}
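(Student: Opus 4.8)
The plan is to prove the four conditions equivalent via the cycle $(1)\Rightarrow(2)\Rightarrow(3)\Rightarrow(4)\Rightarrow(1)$, so that each arrow uses exactly the structure its hypothesis supplies. Throughout let $V$ denote the space of imaginary quaternions, a three-dimensional Euclidean space on which each $S_w$ (for $w\in\S$) acts as an orthogonal transformation fixing $\R$ pointwise. \textbf{$(1)\Rightarrow(2)$:} this is immediate, since for every $w\in\S$ the map $S_w$ is one of the orthogonal transformations of $V$ under which a rotationally invariant $f$ is, by definition, invariant. \textbf{$(2)\Rightarrow(3)$:} fix $x,y\in\R$ and $I,J\in\S$. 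As $I$ and $J$ lie on the unit sphere of $V$, transitivity of $SO(3,\R)$ on that sphere yields a rotation $\rho\in SO(3,\R)$ with $\rho(I)=J$; by Lemma~\ref{generate-so}, $\rho$ is the restriction to $V$ of a finite composition $g=S_{w_1}\circ\cdots\circ S_{w_{2k}}$ with $w_i\in\S$. Since each $S_w$ fixes $\R$ pointwise, $g(x+yI)=x+y\rho(I)=x+yJ$, and iterating the invariance in (2) gives $f\circ g=f$, so $f(x+yI)=f(g(x+yI))=f(x+yJ)$.

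\textbf{$(3)\Rightarrow(4)$:} condition (3) asserts precisely that $f(x+yI)$ is independent of the chosen unit $I$. Fix any $I_0\in\S$ and set $F_2\equiv 0$ and $F_1(\alpha+i\beta)=f(\alpha+\beta I_0)$. Applying (3) with $J=-I_0$ gives $F_1(\alpha-i\beta)=f(\alpha-\beta I_0)=f(\alpha+\beta I_0)=F_1(\alpha+i\beta)$, so $F=F_1+F_2\imath$ satisfies $F(\overline z)=\overline{F(z)}$ and is a stem function; note that no continuity is needed, since the notion of stem function imposes only this symmetry. Using (3) once more, $\mathcal{I}(F)(\alpha+\beta I)=F_1(\alpha+i\beta)=f(\alpha+\beta I_0)=f(\alpha+\beta I)$ for every $I\in\S$, so $f=\mathcal{I}(F)$ with $F_2=0$, i.e.\ $F(\C)\subset\H\tensor\R$.

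\textbf{$(4)\Rightarrow(1)$:} if $F_2=0$, then for a nonzero imaginary $v$ we write $v=|v|\,(v/|v|)$ with $v/|v|\in\S$ and get $f(x+v)=F_1(x+i|v|)$, a value depending only on $x$ and $|v|$. Any orthogonal transformation $T$ of $V$ preserves $|v|$, whence $f(x+T(v))=F_1(x+i|v|)=f(x+v)$, giving rotational invariance. The only genuinely non-formal step is $(2)\Rightarrow(3)$, resting on Lemma~\ref{generate-so} together with transitivity of $SO(3,\R)$ on the sphere. The subtle point to keep in mind is that the maps $S_w$ generate only $SO(3,\R)$, whereas (1) demands invariance under all of $O(3)$, including orientation-reversing maps; the cycle remains honest because the recovery of the full orthogonal group is deferred to $(4)\Rightarrow(1)$, where it follows directly from the norm-only dependence forced by $F_2=0$.
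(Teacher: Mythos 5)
Your proof is correct and rests on the same two ingredients as the paper's: Lemma~\ref{generate-so} to pass from invariance under the $S_w$ to invariance under rotations, and the explicit stem function $F(x+iy)=f(x+yI)\tensor 1$ for the implication into (4); the only difference is that you arrange the implications as a single cycle while the paper proves $(3)\Rightarrow(4)$ separately and declares the rest obvious. Your explicit remark that the $S_w$ generate only $SO(3,\R)$ while (1) asks for invariance under all orthogonal transformations --- with the full group recovered from the norm-only dependence in $(4)\Rightarrow(1)$ --- is a point the paper's terse ``$(2)\Rightarrow(1)$ follows from Lemma~\ref{generate-so}'' glosses over, and is a welcome clarification.
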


\begin{proof}
First we show $(3)\Rightarrow(4)$: Given such a function $f$, we define
$F:\C\to \H\tensor\C$ as $F(x+iy)=f(x+yI)\tensor 1$ for any $I\in\S$.
Since $(3)$ implies $f(x+yI)=f(x+y(-I))$, we have $F(z)=F(\bar z)$.
Combined with $F(\C)\subset\H\tensor \R$ it follows
that $\overline{F(z)}=F(z)=F(\bar z)$.
Consequently $F$ fulfills $F(\bar z)=\overline{F(z)}$ and is a
stem function.

The implications $(4)\Rightarrow (3)\iff(1)\Rightarrow (2)$ are obvious.
The implication $(2)\Rightarrow(1)$
 follows from Lemma~\ref{generate-so}.
\end{proof}

\begin{definition}\label{defRw}
    Let $\Omega$ be a circular\footnote{in the sense of
      Definition~\ref{def-circular}} subset of $\H$ and let
    $f:\Omega\to \H$ be a function. Let $w\in\S$.

    Then we define a function $R_wf:\Omega\to \H$ as
    \[
    (R_wf)(q)=S_w^{-1}\left(f(S_w(q)\right)).
    \]
\end{definition}

\begin{lemma}\label{lemstemrot}
  Let $\Omega_D$ be a slice domain, and let
  $f:\Omega_D \to\H $ be a slice function induced by a stem function 
$F:D\to\H \tensor\mathbb{C}$.

  Then $R_wf$ is induced by $S_{w^{-1}}(F)$ with $S_{w^{-1}}$ 
acting via the first
factor of the tensor product $\H \tensor\mathbb{C}$.
\end{lemma}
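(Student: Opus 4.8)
The plan is to verify, for an arbitrary point $\alpha+I\beta\in\Omega_D$, that $(R_wf)(\alpha+I\beta)$ coincides with the value at $\alpha+I\beta$ of the slice function induced by $G:=S_{w^{-1}}(F)$. Once the identity
\[
(R_wf)(\alpha+I\beta)=G_1(\alpha+i\beta)+I\,G_2(\alpha+i\beta)
\]
is established for all $\alpha+i\beta\in D$ and all $I\in\S$, where $G=G_1+G_2\imath$, Lemma~\ref{stem-basic} guarantees at once that $G$ is a stem function and that $R_wf=\mathcal{I}(G)$, which is exactly the assertion. In particular I would not need to check separately the symmetry $G(\bar z)=\overline{G(z)}$.

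First I would record the algebraic features of $S_w\colon q\mapsto w^{-1}qw$. Conjugation by a nonzero quaternion is an $\R$-algebra automorphism of $\H$: it is $\R$-linear, multiplicative, fixes $\R$ pointwise, and has inverse $S_w^{-1}=S_{w^{-1}}\colon q\mapsto wqw^{-1}$. Consequently $S_w$ maps $\S$ into $\S$, so that $I':=S_w(I)\in\S$; and since $S_w$ is $\R$-linear and fixes reals, $S_w(\alpha+I\beta)=\alpha+I'\beta$. This also shows $S_w(\Omega_D)=\Omega_D$, so $R_wf$ is genuinely defined on all of $\Omega_D$.

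Next I would compute directly. Applying the stem-function relation \eqref{eq-stem} for $f$ with the imaginary unit $I'$ gives
\[
f\bigl(S_w(\alpha+I\beta)\bigr)=f(\alpha+I'\beta)=F_1(\alpha+i\beta)+I'F_2(\alpha+i\beta).
\]
Now I would apply $S_w^{-1}=S_{w^{-1}}$, distributing it over the sum and over the product $I'F_2$ by multiplicativity, and using $S_{w^{-1}}(I')=S_{w^{-1}}\bigl(S_w(I)\bigr)=I$, to obtain
\[
(R_wf)(\alpha+I\beta)=S_{w^{-1}}\bigl(F_1(\alpha+i\beta)\bigr)+I\,S_{w^{-1}}\bigl(F_2(\alpha+i\beta)\bigr).
\]
Since $S_{w^{-1}}$ acting on the $\H$-valued components is precisely its action on the first factor of $\H\tensor\C$, the right-hand side is $G_1(\alpha+i\beta)+I\,G_2(\alpha+i\beta)$ for $G=S_{w^{-1}}(F)$, which is the desired identity.

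The argument is essentially bookkeeping, and I expect no genuine obstacle; the only point demanding care is the consistent tracking of $w$ versus $w^{-1}$ and the direction in which $S_w$ acts. The two structural facts that do the real work are that conjugation by a quaternion is an algebra automorphism, so that it distributes over the product $I'F_2$, and that $S_{w^{-1}}\circ S_w=\mathrm{id}$, so that the rotated imaginary unit $I'=S_w(I)$ returns to $I$ after the outer conjugation.
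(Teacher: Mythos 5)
Your proposal is correct and follows essentially the same route as the paper: evaluate $f$ at the rotated point $\alpha+S_w(I)\beta$ via the stem relation, conjugate back using multiplicativity of $S_{w^{-1}}$ so that $I'=S_w(I)$ returns to $I$, and invoke Lemma~\ref{stem-basic} to conclude that $S_{w^{-1}}(F)$ is the inducing stem function. The only difference is presentational: you make the automorphism properties of $S_w$ explicit up front, whereas the paper carries out the conjugation $w(\,\cdot\,)w^{-1}$ by direct computation.
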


\begin{proof}
We have
\[
f(x+yI)=F_1(x+yi)+IF_2(x+yi)\ \ \ (x,y\in\mathbb{R}, I\in\S,
x+yi\in D)
\]
and 
\[
R_wf(q)=w\left( f(w^{-1}qw) \right)w^{-1}.
\]
Now
$w^{-1}qw=x+yw^{-1}Iw$ for $q=x+yI$.
Furthermore  $w^{-1}Iw\in\S$ and consequently
\[
f(x+yw^{-1}Iw)=
F_1(x+yi) +w^{-1}IwF_2(x+yi).
\]
Therefore
\begin{align*}
(R_wf)(x+yI)&=
w\left(f\left(x+yw^{-1}Iw\right)\right)w^{-1}\\
&=
w\left(
F_1(x+yi) +w^{-1}IwF_2(x+yi)
\right)w^{-1}\\
&= wF_1(x+iy)w^{-1}+IwF_2(x+yi)w^{-1}
\end{align*}
Therefore (using Lemma~\ref{stem-basic})
$R_wf$ is induced by the stem function
\[
S_{w^{-1}}F=\left(S_{w^{-1}}F_1\right)
\tensor 1 + 
\left(S_{w^{-1}}F_2\right)\tensor i 
\]
\end{proof}

\begin{corollary}
  Let $f$ denote a slice regular function
  on a slice domain $\Omega_D$ and let $w\in\S$.

  Then $R_wf:\Omega_D\to\H$ is a slice regular function, too.
\end{corollary}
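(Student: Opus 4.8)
The plan is to reduce slice regularity of $R_wf$ to holomorphicity of its stem function, and then to show that the rotation operator preserves holomorphicity precisely because it acts only on the quaternionic tensor factor, while the complex structure governing holomorphicity lives on the other factor.

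First I would invoke Lemma~\ref{lemstemrot}. Since $f$ is in particular a slice function induced by a stem function $F=F_1+F_2\imath$, that lemma tells us that $R_wf$ is again a slice function, induced by the stem function
\[
G:=S_{w^{-1}}F=(S_{w^{-1}}F_1)\tensor 1+(S_{w^{-1}}F_2)\tensor i,
\]
where $S_{w^{-1}}$ acts via the first factor of $\H\tensor\C$. By the definition of slice regularity (a slice function is regular iff its stem function is holomorphic), it therefore suffices to prove that $G$ is holomorphic, i.e.\ that $\frac{\partial G}{\partial \bar z}=0$ on $D$.

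The key observation is that $\phi:=S_{w^{-1}}\tensor\mathrm{id}$ is a fixed (that is, $z$-independent) $\R$-linear endomorphism of $\H\tensor\C$ which commutes with multiplication by $\imath=1\tensor i$: indeed $\imath$ acts on the second tensor factor whereas $\phi$ acts on the first, so a direct check on the components $(p,q)\mapsto(-q,p)$ and $(p,q)\mapsto(wpw^{-1},wqw^{-1})$ shows $\phi\imath=\imath\phi$. Hence $\phi$ is $\C$-linear for the complex structure that defines holomorphicity, and since $\phi$ does not depend on the base point $z=\alpha+i\beta$, it also commutes with $\frac{\partial}{\partial\alpha}$ and $\frac{\partial}{\partial\beta}$, and therefore with the Cauchy–Riemann operator $\frac{\partial}{\partial\bar z}=\frac12(\frac{\partial}{\partial\alpha}+\imath\frac{\partial}{\partial\beta})$.

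Since $f$ is slice regular, $F$ is holomorphic, so $\frac{\partial F}{\partial\bar z}=0$. Combining this with the commutation just noted gives
\[
\frac{\partial G}{\partial\bar z}=\phi\left(\frac{\partial F}{\partial\bar z}\right)=\phi(0)=0,
\]
so $G$ is holomorphic and $R_wf$ is slice regular. The only point demanding care — and the place where a sloppy argument could break down — is the assertion that conjugation by $w$ commutes with $\frac{\partial}{\partial\bar z}$; this is exactly where one must use that $S_{w^{-1}}$ touches only the quaternionic factor and is constant in $z$, so that it passes through both real partial derivatives and through multiplication by $\imath$. Everything else is formal.
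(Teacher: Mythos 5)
Your proof is correct and follows essentially the same route as the paper: both reduce to the stem function via Lemma~\ref{lemstemrot} and observe that conjugation by $w$, acting only on the quaternionic tensor factor, preserves holomorphicity of $F$. You simply spell out in more detail the commutation with $\imath$ and with $\frac{\partial}{\partial\bar z}$ that the paper states in one line.
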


\begin{proof}
  As a slice regular function, $f$ is induced by a holomorphic
  stem function $F:D\to\HC$. Holomorphicity of $F$ implies that
  \[
  z\mapsto \left( S_wF\right)(z)=w^{-1}(F(z))w
  \]
  is likewise holomorphic. Hence $R_wf$ is slice regular.
  \end{proof}

\begin{corollary}
Under the assumptions of the Lemma \ref{lemstemrot}, 
\[
g=\int_{\S} R_w(f)d\mu(w)
\] is
induced by the stem function $R(F)$ where $R$ denotes the
real part in the first factor of the tensor product, i.e.,
$R(a\tensor b)=(\Re  \, a)\tensor b$,
for $a\in\H$, $b\in\C$.
\end{corollary}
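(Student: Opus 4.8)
The plan is to reduce the statement entirely to the two results already at hand: Lemma~\ref{lemstemrot}, which identifies the stem function of a single rotate $R_w f$, and Lemma~\ref{aver-sw}, which computes the $\mu$-average of the maps $S_w$ over $\S$ as the quaternionic real part. Throughout, $\mu$ is understood to be the unique rotation-invariant probability measure on $\S$ furnished by Lemma~\ref{aver-sw}, since it is precisely that invariance which makes the averaging identity available.

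First I would invoke Lemma~\ref{lemstemrot} to write $R_w f=\mathcal{I}(S_{w^{-1}}F)$, where $S_{w^{-1}}$ acts on the first factor of $\H\tensor\C$. Because the induction map $F\mapsto\mathcal{I}(F)$ is $\R$-linear and depends continuously (pointwise) on $F$, and since $\S$ is compact, integration against $\mu$ commutes with $\mathcal{I}$. This gives $g=\mathcal{I}\!\left(\int_\S S_{w^{-1}}F\,d\mu(w)\right)$, so it suffices to identify the averaged stem function $\int_\S S_{w^{-1}}F\,d\mu(w)$.

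Next I would decompose $F=F_1\tensor 1+F_2\tensor i$ with $F_1,F_2\colon D\to\H$. Since $S_{w^{-1}}$ operates only on the $\H$-factor, the integral splits as
\[
\int_\S S_{w^{-1}}F\,d\mu(w)=\left(\int_\S S_{w^{-1}}F_1\,d\mu(w)\right)\tensor 1+\left(\int_\S S_{w^{-1}}F_2\,d\mu(w)\right)\tensor i.
\]
For each fixed $z\in D$ the values $F_1(z),F_2(z)$ lie in $\H$, so Lemma~\ref{aver-sw} applies pointwise. Here I would first record the elementary observation that for $w\in\S$ one has $w^{-1}=-w$, whence $S_{w^{-1}}=S_w$, so that the averaging lemma applies verbatim and yields $\int_\S S_{w^{-1}}F_1\,d\mu(w)=\Re  \, (F_1)$ and $\int_\S S_{w^{-1}}F_2\,d\mu(w)=\Re  \, (F_2)$.

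Combining these, the averaged stem function equals $(\Re  \, F_1)\tensor 1+(\Re  \, F_2)\tensor i$, which is exactly $R(F)$ by the definition of $R$; hence $g=\mathcal{I}(R(F))$, as claimed. The only genuinely non-routine point is justifying the interchange of the operator $\mathcal{I}$ with integration over $\S$ while respecting the tensor-product bookkeeping; once that interchange is granted, the remaining identification is an immediate pointwise application of Lemma~\ref{aver-sw} to the two $\H$-valued components.
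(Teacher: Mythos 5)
Your argument is correct and follows essentially the same route as the paper: apply Lemma~\ref{lemstemrot} to identify the stem function of each $R_wf$, interchange $\mathcal{I}$ with the integral over $\S$, and then apply Lemma~\ref{aver-sw} componentwise on the first tensor factor (your remark that $S_{w^{-1}}=S_w$ for $w\in\S$ cleanly handles the inverse, which the paper glosses over). No gaps.
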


\begin{proof}
By the Lemma \ref{lemstemrot}, $g$ is induced by $\int_{w} S_{w^{-1}}Fd\mu(w)$.
Aided by Lemma~\ref{stem-basic}, this implies the assertion, because 
\[
\int_{\S} S_{w^{-1}}(q)d\mu(w)=\Re  \, (q)
\] for every $q\in\H $ (Lemma~\ref{aver-sw}).
\end{proof}

\begin{lemma}
  Let $f$ be a slice regular function
  given by a convergent power series
  $f=\sum_{k=0}^{+ \infty} q^ka_k$.

  Then the following holds:  
\[
\int_\S R_wfd\mu(w) =\sum_k q^k\Re  \, (a_k)=\frac{1}{2}(f+f^c) = \frac{1}{2} Tr(f).
\]
\end{lemma}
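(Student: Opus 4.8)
The plan is to reduce the identity to a single monomial and then sum. First I would compute how $R_w$ acts on a term $q^k a_k$. Conjugation by $w$ is an algebra automorphism of $\H$, and so is its inverse $S_w^{-1}=S_{w^{-1}}$; using that $S_w^{-1}$ is multiplicative and inverts $S_w$, one computes $(R_w(q^k a_k))(q)=S_w^{-1}\left((S_wq)^k a_k\right)=\left(S_w^{-1}(S_wq)\right)^k S_{w^{-1}}(a_k)=q^k\,w a_k w^{-1}$, so that $R_w$ leaves the monomial $q^k$ untouched and merely conjugates the coefficient $a_k$.

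Next I would integrate over $\S$. For $w\in\S$ one has $w^{-1}=-w$ and $S_{-w}=S_w$, hence $S_{w^{-1}}=S_w$, so that $\int_\S S_{w^{-1}}(a_k)\,d\mu(w)=\int_\S S_w(a_k)\,d\mu(w)=\Re(a_k)$ by Lemma~\ref{aver-sw}. Pulling the $w$-independent factor $q^k$ out of the integral gives $\int_\S R_w(q^k a_k)\,d\mu(w)=q^k\Re(a_k)$, and summing over $k$ — the interchange of summation with the $\mu$-integral being justified by uniform convergence of the power series on the compact spheres involved — yields the first equality $\int_\S R_wf\,d\mu(w)=\sum_k q^k\Re(a_k)$. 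Alternatively, this first equality is immediate from the Corollary preceding this lemma: the average $\int_\S R_wf\,d\mu(w)$ is induced by the stem function $R(F)$, and for $F=\sum_k z^k a_k$ one has $R(F)=\sum_k z^k\Re(a_k)$.

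For the remaining equalities I would use the explicit form of the slice conjugate on a power series. Identifying the stem function of $q^k a_k$ as $z^k a_k$ and writing $z^k=u+v\imath$ with $u,v\in\R$, its components are $u a_k$ and $v a_k$, so the conjugate stem function is $u a_k^c+v a_k^c\imath=z^k a_k^c$; hence $f^c=\sum_k q^k a_k^c$. Consequently $\tfrac12(f+f^c)=\sum_k q^k\,\tfrac{a_k+a_k^c}{2}=\sum_k q^k\Re(a_k)$, which is exactly the middle expression, while $\tfrac12 Tr(f)=\tfrac12(f+f^c)$ is just the definition of $Tr$.

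I expect the only genuine subtlety to be the bookkeeping between power series and stem functions — in particular confirming $f^c=\sum_k q^k a_k^c$ and justifying the term-by-term interchange of summation with the $\mu$-integral. Once the monomial identity $R_w(q^k a_k)=q^k\,S_{w^{-1}}(a_k)$ is established, the averaging step is routine via Lemma~\ref{aver-sw}, so no serious obstacle remains.
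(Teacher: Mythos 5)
Your proof is correct. The paper states this lemma without any proof --- evidently regarding it as immediate from the preceding corollary (that $\int_\S R_wf\,d\mu(w)$ is induced by the stem function $R(F)$, $R$ being the real part in the first tensor factor) together with Lemma~\ref{aver-sw} --- and your argument, both the monomial-by-monomial computation $R_w(q^ka_k)=q^k\,wa_kw^{-1}$ followed by averaging, and the shortcut you note via that corollary, is exactly the intended route; the only implicit hypothesis worth making explicit is that $\mu$ must be the rotation-invariant probability measure on $\S$ for Lemma~\ref{aver-sw} to apply.
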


We observe that $R_{vw}=R_v\circ R_w$ and $S_{vw}=S_w\circ S_v$
for $v,w\in\H ^*$.

\begin{definition}\label{dirder}
  For $v\in\H$ let $\partial_v$ denote the directional derivative in the
direction of $v$, i.e.,
\[
(\partial_v)(f)(q)=\lim_{t\to 0, t\in\R^*}\frac{f(q+vt)-f(q)}{t}.
\] 
\end{definition}

Next we discuss
differential operators $\partial_*$, $\bar\partial_*$ for
differentiable functions on $\H\setminus\R$.
These operators were first introduced
  by Ghiloni and Perotti
  in \cite{{ghilperglobal}}
  as $\vartheta$ resp.~$\bar\vartheta$. For slice functions, they coincide
  with the operators $\partial/\partial x$
and $\partial/\partial x^c$ introduced 
 in \cite{ghiloniperotti}.

\begin{definition}
Let $\Omega$ be a  domain in $\H$
and let  $f:\Omega\to\H$ be a $C^1$-function.
Let $I\in\S$ and $q\in(\C_I\setminus\R)\cap\Omega$.

We define
\begin{align*}
\left(\partial_*f\right)(q)     &= 
\left(\frac 12\left(\partial_1-I\partial_I\right) f\right)(q)\\
\left(\bar\partial_*f\right)(q) &=
\left(\frac 12\left(\partial_1+I\partial_I\right) f\right)(q).
\end{align*}
(with $\partial_1$, $\partial_I$ being directional derivatives,
cf.~\ref{dirder}).
\end{definition}

\begin{remark}
  Let $\Omega$ be a domain.
  Let $I,J$ be orthogonal imaginary units, $f:\Omega\to\H$,
  $g,h:\Omega\to\C_I$ be $C^1$-functions with $f=g+hJ$.

  Then $\bar\partial_* f$ vanishes on $\Omega\cap\C_I$ if
  and only if the restrictions of $g,h$ are holomorphic
  functions from $\Omega\cap\C_I$ to $\C_I$.
\end{remark}

Next we define a Laplacian:

\begin{definition}\label{def-laplace-star}
  Let $\Omega$ a domain in $\H$, $f$ be a $C^2$-function on $\Omega$
  and $q\in\Omega\setminus\R$.

  We define
  \[
  (\Delta_* f)(q)=\left( 4\partial_*\bar\partial_* f\right)(q).
  \]
\end{definition}

\begin{remark}
  \begin{enumerate}
  \item
    $  \Delta_* = 4\partial_*\bar\partial_* = 4\bar\partial_*\partial_*$.
  \item
    For orthogonal imaginary units $I,J\in\S$ the restriction of $\Delta_*f$ to
    $\Omega\cap\C_I$ vanishes if and only if
    $f|_{\C_I}=g+hJ$ with $g,h:\C_I\to\C_I$ harmonic in the sense
    of complex analysis.
  \end{enumerate}
\end{remark}

\begin{lemma}
Let $I\in\S$. Then $\Delta_*(f)$ vanishes along $\mathbb{C}_I$
for (slice) regular functions $f$ (and anti-regular functions $f$).
\end{lemma}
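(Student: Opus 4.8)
The plan is to reduce the claim to the previous remark, which states that $\Delta_*f$ vanishes along $\C_I$ precisely when the restriction $f|_{\C_I}$ decomposes as $g+hJ$ with $g,h\colon\C_I\to\C_I$ harmonic in the classical sense. For a slice regular function $f$, the Splitting Lemma (Lemma~\ref{splitting}) gives us exactly such a decomposition $f_I(z)=g_I(z)+h_I(z)J$ with $g_I,h_I\colon\Omega\cap\C_I\to\C_I$ \emph{holomorphic}. Since every holomorphic function is in particular harmonic, the hypothesis of the remark is satisfied and we conclude $\Delta_*f$ vanishes along $\C_I$.

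The key steps, in order, are as follows. First I would fix $I\in\S$ and choose $J\in\S$ with $J\perp I$. Second, I would invoke Lemma~\ref{splitting} to write $f_I=g_I+h_I J$ with $g_I,h_I$ holomorphic from $\Omega\cap\C_I$ to $\C_I$. Third, I would note that holomorphic functions satisfy the (complex) Laplace equation, i.e.~are harmonic, so the criterion in the second part of the remark following Definition~\ref{def-laplace-star} applies verbatim, yielding that the restriction of $\Delta_*f$ to $\Omega\cap\C_I$ vanishes. Since $I$ was arbitrary, $\Delta_*f$ vanishes along every slice.

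For the anti-regular case the argument is entirely parallel: if $f$ is anti-regular, its stem function is anti-holomorphic, so on each slice the splitting components $g_I,h_I$ become \emph{anti}-holomorphic rather than holomorphic. But anti-holomorphic functions are also harmonic (the real and imaginary parts of $\bar z\mapsto$ anything satisfy Laplace's equation just as well), so the same remark applies and $\Delta_*f$ again vanishes along $\C_I$.

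I do not anticipate a serious obstacle here, since the statement is essentially a corollary of the preceding remark combined with the Splitting Lemma. The only point requiring a word of care is the transition ``holomorphic $\Rightarrow$ harmonic'': one must observe that the remark's hypothesis asks merely for harmonicity of $g_I$ and $h_I$, a strictly weaker condition than the holomorphicity (respectively anti-holomorphicity) actually supplied by the splitting. Thus the regular and anti-regular cases are handled by the same mechanism, and the proof amounts to citing Lemma~\ref{splitting} and the remark.
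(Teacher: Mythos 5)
Your proof is correct and follows essentially the same route as the paper: the paper's own proof also invokes the Splitting Lemma to write $f|_{\C_I}=f_1+f_2J$ with $f_1,f_2$ holomorphic and then concludes via the harmonicity criterion in the remark after Definition~\ref{def-laplace-star}. Your treatment of the anti-regular case is a welcome elaboration of what the paper leaves implicit.
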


\begin{proof}
This is clear, because the restriction of a regular function $f$ to a complex
line $\C_I$ can be written as $f(z)=f_1(z)+f_2(z)J$ 
where $f_i:\C_I\to\C_I,$ for $i=1,2,$ are
entire functions with respect to the complex structure on $\C_I$
and $J$ orthogonal to $I$, by the splitting Lemma \ref{splitting}.
\end{proof}


Let us now discuss the case where $f$ is a slice function, i.e.,
induced by a stem function $F$.

\begin{proposition}\label{delta-stem}
  Let $\Omega_D$ be circular domain, arising as circularization
  of a symmetric domain $D$ in $\C$.
  
  Let $f:\Omega_D \to\H $ be a slice function
  which is induced by a stem function $F:D\to\H \tensor\mathbb{C}$.
  Then  $\partial_* f$, $\bar\partial_*f$ and $\Delta_*f$ are slice functions
  on $\Omega_D\setminus\R$ induced by the stem functions
  $\frac {\partial F}{\partial z}$, $\frac{\partial F}{\partial \bar z}$
  resp.~$\Delta F$.
\end{proposition}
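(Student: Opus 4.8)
The plan is to reduce the statement to a computation on a single slice $\C_I$ and then to recognise the result as $\mathcal{I}$ of the claimed stem function. First I would fix $q=\alpha+I\beta\in\Omega_D\setminus\R$ (so $\beta\neq 0$ and $I\in\S$); since $q+t=(\alpha+t)+I\beta$ and $q+It=\alpha+I(\beta+t)$ both remain on $\C_I$ and move only the parameter $\alpha$ resp.~$\beta$, the defining relation~\eqref{eq-stem} yields at once
\[
(\partial_1 f)(q)=\frac{\partial F_1}{\partial\alpha}+I\frac{\partial F_2}{\partial\alpha},\qquad
(\partial_I f)(q)=\frac{\partial F_1}{\partial\beta}+I\frac{\partial F_2}{\partial\beta},
\]
where all partial derivatives are evaluated at $z=\alpha+i\beta$.

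Next I would insert these into $\bar\partial_* f=\tfrac12(\partial_1+I\partial_I)f$ and use $I^2=-1$ in $\H$ to obtain
\[
(\bar\partial_* f)(q)=\tfrac12\left(\frac{\partial F_1}{\partial\alpha}-\frac{\partial F_2}{\partial\beta}\right)
+\tfrac12\,I\left(\frac{\partial F_2}{\partial\alpha}+\frac{\partial F_1}{\partial\beta}\right).
\]
Independently I would expand $\frac{\partial F}{\partial\bar z}=\tfrac12(\partial_\alpha F+\imath\,\partial_\beta F)$ for $F=F_1+F_2\imath$, using that in $\HC$ one has $\imath p=p\imath$ for $p\in\H$ and $\imath^2=-1$; this shows that the two $\H$-valued components of $\frac{\partial F}{\partial\bar z}$ are precisely $\tfrac12(\partial_\alpha F_1-\partial_\beta F_2)$ and $\tfrac12(\partial_\alpha F_2+\partial_\beta F_1)$, so applying $\mathcal{I}$ reproduces the displayed formula for $\bar\partial_* f$. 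Running the same computation with the sign of $I\partial_I$ reversed identifies $\partial_* f$ with $\mathcal{I}(\partial F/\partial z)$. The one delicate point is to keep the two imaginary units apart: the factor $I$ arises from the left multiplication in the slice representation and squares to $-1$ in $\H$, whereas $\imath=1\tensor i$ commutes with $\H$ and squares to $-1$ in the complexification, and the identity rests on these two mechanisms producing matching signs.

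For the Laplacian I would iterate. Since $\frac{\partial F}{\partial\bar z}$ is again a stem function, of class $C^1$ whenever $F\in C^2$, the first-order result just established applies to the slice function $\bar\partial_* f=\mathcal{I}(\partial F/\partial\bar z)$, giving
\[
\Delta_* f=4\,\partial_*\bar\partial_* f=4\,\mathcal{I}\!\left(\frac{\partial}{\partial z}\frac{\partial F}{\partial\bar z}\right)=\mathcal{I}\!\left(4\frac{\partial^2 F}{\partial z\,\partial\bar z}\right)=\mathcal{I}(\Delta F),
\]
using that the Laplacian on $\C$ is $\Delta=4\,\partial_z\partial_{\bar z}$. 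I do not anticipate a genuine obstacle here: the whole argument is a verification, and the only real care is the sign bookkeeping between $I$ and $\imath$ in the first-order comparison; once that is in place, the statements for $\partial_*$, $\bar\partial_*$ and $\Delta_*$ follow, and the domain of validity $\Omega_D\setminus\R$ is exactly where the directional-derivative definitions make sense.
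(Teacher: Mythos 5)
Your proposal is correct and follows exactly the route the paper indicates: the paper's proof is a one\--line deferral (``this may be deduced from the definitions of the respective operators\dots see Ghiloni--Perotti, Theorem~2.2''), and you have simply written out that definitional verification in full --- the computation of $\partial_1 f$ and $\partial_I f$ from \eqref{eq-stem}, the matching of components against $\partial F/\partial z$ and $\partial F/\partial\bar z$ with the correct bookkeeping of $I$ versus $\imath$, and the iteration $\Delta_*=4\partial_*\bar\partial_*$ using that $\partial F/\partial\bar z$ is again a stem function. The first\--order case is also consistent with equation \eqref{cullen} already recorded in the paper, so no gap remains.
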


\begin{proof}
  This may be deduced from the definitions of the respective operators
  using the representation formula for slice functions.
  See \cite{ghilperglobal},~Theorem 2.2.
\end{proof}

In particular, 
  for slice functions these operators $\partial_*$, $\bar\partial_*$
  and $\Delta_*$ are
  well-defined everywhere, including at the real points, whereas for
  arbitrary $C^1$-functions they are defined only outside $\R$.

\begin{corollary}
A slice function $f$ is annihilated by $\Delta_*$ if and only if its stem
function $F$ is harmonic.
\end{corollary}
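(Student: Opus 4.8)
The plan is to read this off directly from Proposition~\ref{delta-stem} together with the injectivity of the inducing map $\mathcal{I}$. By Proposition~\ref{delta-stem}, if $f=\mathcal{I}(F)$ is a slice function induced by the stem function $F:D\to\HC$, then $\Delta_*f$ is again a slice function and $\Delta_*f=\mathcal{I}(\Delta F)$, where $\Delta F$ is the ordinary (complex, componentwise) Laplacian of $F$ regarded as an $\HC\simeq\C^4$-valued function on $D$. The entire content of the corollary is therefore the passage from $\mathcal{I}(\Delta F)\equiv 0$ to $\Delta F\equiv 0$, and back.

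First I would record that the map $\mathcal{I}$ is injective on stem functions: a slice function $g=\mathcal{I}(G)$ with $G=G_1+G_2\imath$ satisfies, for each $z=\alpha+i\beta\in D$ and each $I\in\S$,
\[
G_1(\alpha+i\beta)=\tfrac12\bigl(g(\alpha+I\beta)+g(\alpha-I\beta)\bigr),\qquad
I\,G_2(\alpha+i\beta)=\tfrac12\bigl(g(\alpha+I\beta)-g(\alpha-I\beta)\bigr),
\]
exactly as in Lemma~\ref{stem-basic}. Hence $g\equiv 0$ on $\Omega_D$ forces $G_1\equiv 0$ and $G_2\equiv 0$ on $D$, i.e.\ $G\equiv 0$. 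Applying this with $g=\Delta_*f$ and $G=\Delta F$ gives immediately: $\Delta_*f$ vanishes identically on $\Omega_D$ if and only if $\Delta F$ vanishes identically on $D$, which is the definition of $F$ being harmonic. This proves both directions at once.

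The only point requiring care is the behaviour at real points, since $\Delta_*$ was originally defined only on $\H\setminus\R$ while $F$ is holomorphic data on all of $D$, including $D\cap\R$. This is handled by the remark following Proposition~\ref{delta-stem}: for slice functions $\Delta_*f$ is well defined everywhere, including at real points, and equals $\mathcal{I}(\Delta F)$ there as well; equivalently, since $F$ is assumed $C^2$, both $\Delta F$ and $\Delta_*f$ are continuous, so vanishing on $D\setminus\R$ (resp.\ $\Omega_D\setminus\R$) extends by continuity to all of $D$ (resp.\ $\Omega_D$). I do not expect any genuine obstacle here; the corollary is a formal consequence of Proposition~\ref{delta-stem} and the injectivity of $\mathcal{I}$, so the proof is a single short paragraph.
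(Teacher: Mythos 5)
Your proposal is correct and follows exactly the route the paper intends: the corollary is stated as an immediate consequence of Proposition~\ref{delta-stem} (namely $\Delta_*f=\mathcal{I}(\Delta F)$), and your only added ingredient, the injectivity of $\mathcal{I}$ via the averaged representation-formula identities, is precisely the step the paper leaves implicit. The remark about real points is also handled the same way the paper does, via the observation following Proposition~\ref{delta-stem}.
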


\begin{corollary}
Let $f$ be a $C^2$ slice function on a circular domain $\Omega_D$.
If $\Delta_*f\equiv 0$, then $f$ is real-analytic.
\end{corollary}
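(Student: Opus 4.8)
The plan is to move the problem to the stem function, where ordinary elliptic regularity applies, and then to push real-analyticity back to $f$ across the real axis by exploiting the parity built into any stem function. First I would invoke Proposition~\ref{delta-stem}: since $f$ is a slice function induced by a stem function $F:D\to\HC$, the operator $\Delta_*$ carries $f$ to the slice function induced by $\Delta F$. Hence $\Delta_* f\equiv 0$ forces $\Delta F\equiv 0$ on $D$, that is, $F$ is harmonic. Writing $\HC\simeq\R^8$, this says that each real component of $F$ is a harmonic function of the two real variables $\alpha,\beta$ (with $z=\alpha+i\beta$), and by the classical interior regularity for the Laplace equation (analytic hypoellipticity of $\Delta$) each such component is real-analytic on $D$. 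Consequently $F$, and in particular its $\H$-valued components $F_1,F_2$, are real-analytic functions of $(\alpha,\beta)$.

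Next I would reconstruct $f$ from $F$ through $f(\alpha+I\beta)=F_1(\alpha+i\beta)+IF_2(\alpha+i\beta)$ and test real-analyticity in the euclidean coordinates $q=x_0+ix_1+jx_2+kx_3$. Put $\alpha=\Re(q)$, $\beta=|\Im(q)|\ge 0$, and $I=\Im(q)/\beta$ for $\beta>0$. On $\Omega_D\setminus\R$ the assignment $q\mapsto(\alpha,\beta,I)$ is itself real-analytic, since $\beta=|\Im(q)|$ is analytic wherever $\Im(q)\ne 0$; thus $f$ is real-analytic off the real axis with no further argument. The one delicate point, and the place I expect the main obstacle, is analyticity across $\R$, where $\beta=|\Im(q)|$ ceases to be smooth because of the square root.

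The device to overcome this is the parity recorded in Lemma~\ref{stem-basic}: the stem condition gives $F_1(\alpha-i\beta)=F_1(\alpha+i\beta)$ and $F_2(\alpha-i\beta)=-F_2(\alpha+i\beta)$, so as functions of $(\alpha,\beta)$ the map $F_1$ is even in $\beta$ and $F_2$ is odd in $\beta$. I would then use the standard fact that a real-analytic function even in $\beta$ is a real-analytic function of $\beta^2$, while one odd in $\beta$ is $\beta$ times a real-analytic function of $\beta^2$: near a real point $\alpha_0\in\R$ the power series of $F_1$ contains only even powers of $\beta$ and that of $F_2$ only odd powers, so the substitution $u=\beta^2$ produces convergent series. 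This yields real-analytic $\H$-valued functions $G,H$ with $F_1(\alpha+i\beta)=G(\alpha,\beta^2)$ and $F_2(\alpha+i\beta)=\beta\,H(\alpha,\beta^2)$, whence
\[
f(q)=G\!\left(\Re(q),|\Im(q)|^2\right)+\Im(q)\,H\!\left(\Re(q),|\Im(q)|^2\right),
\]
because $I\beta=\Im(q)$ absorbs the factor $\beta$. Since $\Re(q)$ and $\Im(q)$ are linear and $|\Im(q)|^2=x_1^2+x_2^2+x_3^2$ is a polynomial in the coordinates, the right-hand side is a composition of real-analytic maps and hence real-analytic on all of $\Omega_D$, the real points included. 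The crux is exactly this continuation across $\R$, and it is resolved entirely by the even/odd structure of the stem function.
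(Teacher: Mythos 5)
Your argument is correct and follows the route the paper intends (the corollary is stated without proof, immediately after the observation that $\Delta_*f\equiv 0$ is equivalent to harmonicity of the stem function $F$, whence $F$ is real-analytic by elliptic regularity). Your additional care in passing real-analyticity from $F$ back to $f$ across the real axis, via the even/odd parity of $F_1,F_2$ in $\beta$ and the substitution $u=\beta^2$, correctly fills in the one step the paper leaves implicit.
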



\begin{corollary}
Let $f:\H\to\H$ be a $C^2$ slice function.
If $f$ is bounded and $\Delta_*f$ vanishes identically,
then $f$ is constant.
\end{corollary}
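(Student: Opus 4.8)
The plan is to transfer the problem to the stem function $F=F_1+F_2\imath:\C\to\HC$ inducing $f$ and then to invoke a Liouville-type theorem. Since $\Omega_D=\H$ forces $D=\C$, the stem function is defined on all of $\C$, and being $C^2$ its components are genuine classical functions. By Proposition~\ref{delta-stem}, $\Delta_*f$ is the slice function induced by $\Delta F$; hence $\Delta_*f\equiv 0$ is equivalent to $\Delta F\equiv 0$, i.e.\ each of the eight real components of $F:\C\cong\R^2\to\HC\cong\R^8$ is a harmonic function of the two real variables $(\alpha,\beta)$.

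Next I would deduce that $F$ is bounded from the boundedness of $f$. Fix $z=\alpha+i\beta\in\C$ and any $I\in\S$. From the defining relation $f(\alpha+I\beta)=F_1(z)+IF_2(z)$ together with $f(\alpha-I\beta)=F_1(z)-IF_2(z)$ (apply \eqref{eq-stem} with the imaginary unit $-I\in\S$), adding and subtracting give
\[
F_1(z)=\tfrac12\bigl(f(\alpha+I\beta)+f(\alpha-I\beta)\bigr),\qquad
IF_2(z)=\tfrac12\bigl(f(\alpha+I\beta)-f(\alpha-I\beta)\bigr).
\]
If $|f|\le M$ on $\H$, then $|F_1(z)|\le M$ and, since $|I|=1$ makes quaternionic multiplication by $I$ an isometry, $|F_2(z)|=|IF_2(z)|\le M$ as well. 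Thus $F_1$ and $F_2$, and hence $F$, are bounded on $\C$.

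Finally I would apply Liouville's theorem for harmonic functions on $\R^2$ componentwise: a bounded harmonic real function on $\R^2$ is constant, so $F$ is a constant element of $\HC$; in particular $F_1\equiv c_1$ and $F_2\equiv c_2$ with $c_1,c_2\in\H$. The step I expect to require care is that a constant stem function need not a priori yield a constant slice function, since $f(\alpha+I\beta)=c_1+Ic_2$ still seems to depend on $I$. This is resolved by the parity built into the notion of a stem function: Lemma~\ref{stem-basic} gives $F_2(\bar z)=-F_2(z)$, and a constant function satisfying this relation must vanish, whence $c_2=0$. Consequently $f(\alpha+I\beta)=c_1$ for all $\alpha,\beta\in\R$ and $I\in\S$, i.e.\ $f$ is constant. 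The only genuinely non-formal ingredient is the two-dimensional Liouville theorem for harmonic functions; everything else is bookkeeping with the stem function.
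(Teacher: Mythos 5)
Your proof is correct and follows the same route as the paper: pass to the stem function, observe that $\Delta_*f\equiv 0$ and boundedness of $f$ give a bounded harmonic $F$ on $\C$, and conclude by Liouville. The paper states this in two lines; your version supplies the details (the bound $|F_1|,|F_2|\le M$ via the $\pm I$ trick and the observation that the parity $F_2(\bar z)=-F_2(z)$ kills the constant $c_2$), all of which are correct.
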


\begin{proof}
  $\Delta_*f\equiv 0$ implies the harmonicity of the stem function $F$.
  Now boundedness of $f$ implies boundedness of $F$ which leads to a
  contradiction unless $F$ (and therefore also $f$) is constant.
\end{proof}

\begin{theorem}\label{harm-star}
Let $\Omega_D$ be a slice domain.
Let $f:\Omega_D\to\H$ be a $C^2$ slice function.
Assume that $D$ is simply-connected.

\begin{enumerate}
\item
$\Delta_*f$ is vanishing identically if and only if $f$
can be written as a sum of a regular function $g$ and an
anti-regular function $h$.
\item  Assume $\Delta_*f\equiv 0$.
  Let $D_\R=D\cap\R$.

  Then $f$ can be written as a sum of 
a slice-preserving regular function $g$ and a slice-preserving
anti-regular function $h$ if and only if 
\[
f(x)\in\R,\ \forall x\in D_\R\quad\text{ and }(\partial_*f)(x)\in\R,
\ \forall x\in D_\R,
\]
which in turn holds if and only if
\[
f(x)\in\R,\ \forall x\in D_\R\quad\text{ and }(\bar\partial_*f)(x)\in\R,
\ \forall x\in D_\R,
\]
or if and only if

\[
\exists \,\, p\in D\cap\R:f(p)\in\R, 
(\bar\partial_*f)(x)\in\R, \ \forall x\in D_\R\quad\text{ and }
(\partial_*f)(x)\in\R,
\ \forall x\in D_\R.
\]

\end{enumerate}
\end{theorem}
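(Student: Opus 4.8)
The plan is to push everything through to the level of stem functions by means of Proposition~\ref{delta-stem}, which turns the three operators $\partial_*,\bar\partial_*,\Delta_*$ into the ordinary operators $\partial/\partial z$, $\partial/\partial\bar z$, $\Delta$ applied to the $\HC$-valued stem function $F$ on $D\subset\C$. Since $D$ is simply-connected, both parts then reduce to classical one-variable complex analysis applied componentwise under the identification $\HC\simeq\C^4$.

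For part (1), Proposition~\ref{delta-stem} gives that $\Delta_* f\equiv 0$ is equivalent to harmonicity of $F$. The classical fact that a harmonic function on a simply-connected domain splits as holomorphic plus anti-holomorphic then applies: I would set $\Phi=\partial F/\partial z$ (holomorphic, because $\partial_{\bar z}\Phi=\frac14\Delta F=0$) and $\Psi=\partial F/\partial\bar z$ (anti-holomorphic), take primitives $G,H$ on $D$ with $G'=\Phi$ and $\partial_{\bar z}H=\Psi$, and check $F=G+H$ up to a constant. The one delicate point is that $G$ and $H$ must again be stem functions, so that $g=\mathcal{I}(G)$ is regular and $h=\mathcal{I}(H)$ is anti-regular; I would secure this using the involution $\tilde G(z)=\overline{G(\bar z)}$: since $\Phi,\Psi$ are themselves stem functions, $\tilde G$ is a second primitive of $\Phi$, so $\tilde G-G$ is a constant of the form $q\imath$, which is absorbed by replacing $G$ with $G+\frac12(\tilde G-G)$, and similarly for $H$, the residual real constant being absorbed into $G$. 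The converse direction is immediate from $\R$-linearity of $\Delta_*$ together with the already-proved fact that $\Delta_*$ annihilates regular and anti-regular functions.

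For part (2) the key observation is that, once $\Delta_* f\equiv0$, a decomposition into slice-preserving pieces exists if and only if $f$ itself is slice-preserving. By Proposition~\ref{slpreschar} this amounts to the vanishing of the $V$-component $F^V$ in the real splitting $\H=\R\oplus V$ ($V$ the imaginary quaternions), and $F^V=G^V+H^V$; if $g,h$ are slice-preserving then $G^V=H^V=0$ so $F^V=0$, and conversely $F^V\equiv0$ forces $G^V=-H^V$ to be simultaneously holomorphic and anti-holomorphic, hence a constant which the remaining $\R$-valued freedom in the decomposition shifts to zero. So it suffices to characterize $F^V\equiv0$ by the stated reality conditions. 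Since $\partial_*,\bar\partial_*$ act componentwise and respect the $\R\oplus V$ splitting, at a real point $x$ one has $f(x)=F_1(x)$ and $(\partial_* f)(x)=(\partial F/\partial z)_1(x)$; hence $f(x)\in\R$ on $D_\R$ says exactly $F^V|_{D_\R}=0$, while $(\partial_* f)(x)\in\R$ says $(\partial F^V/\partial z)|_{D_\R}=(G^V)'|_{D_\R}=0$ and $(\bar\partial_* f)(x)\in\R$ says $(\partial F^V/\partial\bar z)|_{D_\R}=0$.

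The remainder is the classical identity theorem (applicable since $\Delta_* f\equiv0$ makes $F$ real-analytic). As $D_\R=D\cap\R$ is a nonempty interval with accumulation points in $D$, the vanishing $(G^V)'\equiv0$ on $D_\R$ forces $G^V$ constant on all of $D$, and symmetrically the $\bar\partial$-condition forces $H^V$ constant. Under condition (a): $G^V$ is constant, so the anti-holomorphic $H^V=F^V-G^V$ is constant on $D_\R$ and hence globally, whence $F^V$ is constant and $=0$ on $D_\R$, i.e.\ $F^V\equiv0$; condition (b) is the mirror image in the roles of $G^V,H^V$; and under condition (c) the two derivative hypotheses give $G^V,H^V$ constant directly, so $F^V$ is constant and the single value $f(p)\in\R$ pins it to $0$. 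Conversely $F^V\equiv0$ makes all stem components real, so all three reality conditions hold, closing the equivalences. I expect the main obstacles to be the stem-function bookkeeping in part (1) and, in part (2), the clean reduction of \emph{sum of slice-preserving pieces} to \emph{$f$ slice-preserving}; once these are in place, the identity principle finishes the argument.
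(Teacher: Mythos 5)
Your proof is correct. For part (1) it is essentially the paper's argument: both decompose the harmonic stem function $F$ on the simply-connected $D$ as $G+H$ with $G$ holomorphic and $H$ anti-holomorphic, and both observe that the obstruction to $G,H$ being stem functions is a constant of the form $q\imath$ (anti-fixed by the $\HC$-conjugation) which can be redistributed between the two summands; you build $G,H$ explicitly as primitives of $\partial F/\partial z$ and $\partial F/\partial\bar z$ where the paper just quotes the classical splitting, but the normalization step is the same trick. (One small terminological point: the ``residual real constant'' you absorb into $G$ at the end should be understood as a constant fixed by the $\HC$-conjugation, i.e., lying in $\H\tensor 1$, not in $\R\subset\H$; with that reading the step is fine.) In part (2) you genuinely diverge, to your advantage. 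The paper reduces the three conditions to the claim that $f(D_\R)\subset\R$ follows from $\partial_1 f$ real on $D_\R$ plus reality at one point, and then applies the identity principle to $F$ itself on the grounds that ``$F$ is holomorphic'' --- but under the hypothesis $\Delta_*f\equiv 0$ the stem function is only harmonic, so that step as written needs exactly the repair you supply: apply the identity principle separately to the holomorphic $(G^V)'$ and to the anti-holomorphic $H^V$ (the components in $V\tensor\C$, $V$ the imaginary quaternions), and deduce $F^V\equiv 0$ from constancy together with vanishing on $D_\R$. Likewise your preliminary reduction of ``sum of slice-preserving pieces'' to ``$f$ slice-preserving,'' via the observation that $G^V=-H^V$ is simultaneously holomorphic and anti-holomorphic, hence a constant in $V\subset\H\tensor 1$ that can be shifted between $G$ and $H$ without destroying the stem property, is spelled out more completely than in the paper, which settles this point by asserting that slice-preservation is detected by $F(D\cap\R)\subset\R$. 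In short: same overall strategy (push everything to stem functions and use the identity principle along $D\cap\R$), but your execution of part (2) is the more robust one, since it only ever invokes the identity principle for functions that actually are holomorphic or anti-holomorphic.
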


\begin{proof}
Let $F$ be the stem function inducing $f$.
Then $\Delta_*f$ is a slice function induced by the stem function
$\Delta F$. 
Now $F$ is a map from $D$ to the complex vector space $\HC$.
Hence $\Delta F$ vanishes iff $F$ is harmonic iff $F=G+H$ for
a holomorphic function $G:D\to\HC$ and an anti-holomorphic
function $H:D\to\HC$.

We have to verify that $G,H$ may be taken to be stem functions. 
To state it more precisely:
We have to show: 
{\em If $F:D\to\HC$ is a map such that $\overline{F(\bar z)}
=F(z)$ and such that $F=G+H$ for a holomorphic map $G$ and an
anti-holomorphic map $H$, then $G$ and $H$ may be chosen in such a way
that $\overline{G(\bar z)}
=G(z)$ and $\overline{H(\bar z)}
=H(z)$.}
 Now $G(z)-\overline{G(\bar z)}$ is holomorphic,
$H(z)-\overline{H(\bar z)}$ is anti-holomorphic, and
\[
\left( G(z)-\overline{G(\bar z)} \right)+
\left( H(z)-\overline{H(\bar z)} \right)=0
\]
because of $F=G+H$ and $\overline{F(\bar z)}
=F(z)$.
It follows that there is a constant $c$ such that
\[
\left( G(z)-\overline{G(\bar z)} \right)=c=
-\left( H(z)-\overline{H(\bar z)} \right).
\]
We observe that $c$ is totally imaginary and that
\[
\overline{G(\bar z)-c/2}=\overline{G(\bar z)} +c/2=
G(z) - c/2
\]
Thus, by replacing $G$ with $G- c/2$
we may turn $G$ into a stem function.
Correspondingly we replace $H$ by $H+c/2$.

Finally we let $g$, resp.~$h$, be the slice functions
induced by the stem functions $G$, resp.~$H$, and observe that
$g$ is regular, because $G$ is holomorphic and $h$ is anti-regular,
because $H$ is anti-holomorphic.

Now we demonstrate $(2)$. 

First we observe that
\[
\partial_*f+\bar\partial_*f = \partial_1f
\]
Hence, if two of these values are zero, so is the third.

Next we claim:

{\em $f(\Omega_D\cap \R)\subset\R$ if and only
  if  $(\partial_1f)$ is real on $\Omega_D\cap\R$ and
  $\exists \,\, p\in \Omega_D\cap \R$ such that $f(p)\in\R $.}

The direction ``$\implies$'' is obvious.
To verify the converse, let $I$ denote a connected component ot
$\Omega_D\cap\R$. Then $f(I)\subset\R$. This implies that the stem function
$F$ maps $I$ into $\R\tensor\C\subset\H\tensor\C$.
Because $F:D\to\HC$ is a holomorphic map, we may now conclude
with the identity principle that $F(D)\subset \R\tensor\C$.
Using $\overline{F(\bar z)}=F(z)$, this implies that $F(\R\cap D)\subset \R$
which in turn (via representation formula) implies $f(\R)\subset\R$.

This completes the proof, because a slice function $f$ on a slice
domain $\Omega_D$ is slice-preserving if and only if the stem function
$F$ satisfies $F(D\cap\R)\subset\R\tensor_\R\R\subset \HC$.

\end{proof}

\begin{remark}
We observe that in \cite{perotti0} a function in the kernel of $\Delta_*$ is called {\em {slice-harmonic}}.
\end{remark}

\begin{definition}
\[
(\Delta'f)(q)=\left(\Delta_*\int_{\S}R_wfd\mu(w)\right) (q)
\]
We observe that $(\Delta' f)(q)=\frac{1}{2}\Delta_* (Tr(f))(q)=\frac{1}{2}\Delta_* (f+f^c)(q)$ for all $I \in \S .$ \\
Analogously we can introduce another second order operator in the following way:
$$
(\Delta'' f)(q)=(\Delta_* N(f))(q)=(\Delta_* (f \cdot f^c)) (q)
$$
for all $I \in \S .$ \\
On one side $\Delta '$ and $\Delta_*$
are linear operators, on the other hand, $\Delta''$ is not a linear operator.
\end{definition}
By the way, for $\Delta ''$ a product formula holds:

  \begin{proposition}\label{product-rule}
   Let $f,g$ be slice functions.
   Then
   \[
   \Delta''(f*g)=(f^s)*\Delta'' g + (\Delta'' f) * (g^s)
   +\left( \partial_*(f^s)* \bar\partial_*(g^s) \right)
   +\left( \bar\partial_*(f^s)* \partial_*(g^s) \right)
    \]
  \end{proposition}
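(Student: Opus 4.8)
The plan is to push everything down to the stem-function level, where by Proposition~\ref{delta-stem} the operators $\Delta_*$, $\partial_*$, $\bar\partial_*$ become the ordinary operators $\Delta$, $\partial/\partial z$, $\partial/\partial\bar z$ and where the slice product $*$ becomes the ordinary product of stem functions. First I would rewrite $\Delta''(f*g)=\Delta_*\bigl(N(f*g)\bigr)$ using the multiplicativity of the normal function: since $N(h)=h^s$ and $(f*g)^s=f^s g^s$, one has $N(f*g)=f^s*g^s$. Likewise $\Delta_*(f^s)=\Delta''f$ and $\Delta_*(g^s)=\Delta''g$. Thus the whole statement reduces to a single Leibniz rule for $\Delta_*$ applied to the slice product $f^s*g^s$.

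Next I would exploit that $f^s$ and $g^s$ are slice-preserving (Proposition~\ref{slpreschar}): their stem functions $P$, $Q$ take values in the central subalgebra $\R\tensor\C\subset\HC$, and the same holds for $\partial_*(f^s)$, $\bar\partial_*(f^s)$, $\partial_*(g^s)$, $\bar\partial_*(g^s)$, because $\partial/\partial z$ and $\partial/\partial\bar z$ preserve real-valuedness of the two components. Hence every product occurring in the asserted formula is a product of central-valued objects; they commute, and no ordering ambiguity arises from the non-commutativity of $\H$. This is precisely what reduces the computation to the classical commutative setting and removes the only potential source of trouble.

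For the computation itself, write $f^s=\mathcal{I}(P)$, $g^s=\mathcal{I}(Q)$, so that $\Delta_*(f^s*g^s)=\mathcal{I}\bigl(\Delta(PQ)\bigr)$ by Proposition~\ref{delta-stem}. The ordinary Leibniz rule gives
\[
\partial_z\partial_{\bar z}(PQ)=(\partial_z\partial_{\bar z}P)\,Q+(\partial_{\bar z}P)(\partial_z Q)+(\partial_z P)(\partial_{\bar z}Q)+P\,(\partial_z\partial_{\bar z}Q).
\]
Applying $\Delta_*=4\,\partial_*\bar\partial_*$ and transporting back through $\mathcal{I}$, the two outer contributions recombine into $(\Delta''f)*g^s$ and $f^s*(\Delta''g)$, while the two mixed contributions become $\bar\partial_*(f^s)*\partial_*(g^s)$ and $\partial_*(f^s)*\bar\partial_*(g^s)$; here I use that products of stem functions correspond to slice products. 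Collecting terms yields the stated identity.

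I expect the only step requiring care to be the first one, namely the clean identification $N(f*g)=f^s*g^s$ together with $N(h)=h^s$; once this is in place and the slice-preserving nature of $f^s$, $g^s$ (and of their $\partial_*$, $\bar\partial_*$ derivatives) is noted, the rest is the familiar commutative Leibniz rule transported by $\mathcal{I}$, and no genuine obstacle remains.
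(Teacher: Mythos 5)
Your proof is correct and follows essentially the same route as the paper: both reduce $\Delta''(f*g)$ to $\Delta_*(f^s*g^s)$ via the multiplicativity of symmetrization (equivalently, the fact that slice-preserving functions commute under $*$), and then apply the ordinary Leibniz rule for $\partial\bar\partial$ to the product of the slice-preserving factors. The only difference is cosmetic: you carry out the Leibniz computation on the stem functions via Proposition~\ref{delta-stem}, while the paper computes directly with $\partial_*$, $\bar\partial_*$ on the slices; the content is identical.
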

  \begin{proof}
    First we note that
    \[
    \Delta''(f*g)=\Delta_*(N(f*g))=\Delta_*((f*g)^s)=
    \Delta_*(f*g*g^c *f^c)
    \]
    Next we recall that (with respect to the slice product $*$) a slice
      preserving function commutes with every other slice function.
      Since $g*g^c$ is always slice preserving, we obtain:
      \begin{align*}
        &\Delta_*(f*g*g^c *f^c)=\Delta_*(f*f^{c}*g*g^c)\\
        =&
      \Delta_*(f^s* g^s)=\partial_*\bar\partial_*(f^s\cdot g^s)\\
      =& \partial_* \left( (\bar\partial_*f^s)\cdot g^s
      +   f^s\cdot(\bar\partial_*g^s) \right)\\
      =& (\partial_* \bar\partial_*f^s)\cdot g^s
      + (\bar\partial_*f^s)\cdot (\partial_* g^s)
      +  (\partial_*f^s)\cdot(\bar\partial_*g^s)
      +       f^s\cdot(\partial_*\bar\partial_*g^s) \\
      =& f^s*(\Delta'' g) + (\Delta'' f) * g^s
      +      (\bar\partial_*f^s)\cdot (\partial_* g^s)
      +(\partial_*f^s)\cdot(\bar\partial_*g^s)\\
      \end{align*}
  \end{proof}

\begin{remark}
In \cite{CGCS} the following global first order differential operator
was introduced:
$$
G(x)=(x_1^2 +x_2^2 + x_3^2) \frac{\partial}{\partial x_0} +(x_1 i +x_2 j + x_3 k) \cdot \sum_{j=1}^3 x_j \frac{\partial}{\partial x_j}
$$
where $x=x_0 + x_1 i + x_2 j + x_3 k.$
Direct calculations verify readily that $G=y^2\bar\partial_*$ where
\[
y=|\Im(x)|=
\sqrt{\sum_{k=1}^3|x_i|^2}.
\] 
Whereas the operator $\Delta_*$ is defined everywhere only if applied
to slice functions, $G$ is everywhere defined for any $C^1$-function.

Still, we believe that the additional factor $y^2$ (which guarantees the
applicability of the operator even to non-slice functions) is
somewhat unnatural.

In particular, if we try to construct a second order differential operator,
we see that
\[
\bar G G=y^4\Delta_*  -2Iy^3\bar\partial_*.
\]
Thus $\bar G G$ even applied to slice functions on a complex slice
$\C_I$ is not merely the multiple of the ordinary complex
Laplacian.
Moreover, $\bar GG$ annihilates regular functions, but not
anti-regular functions. \\

We observe that in \cite{ghilperglobal} a global operator related to $G$ was studied.
\end{remark}

\begin{remark}
It may happen that a function $f:\H\to\H$ satisfies both
$\bar\partial_*Tr(f)=0$ and $\bar\partial_* N(f)=0$, but 
nevertheless is not
regular. For example, let $I,J\in\S$ be orthogonal
(i.e.~$IJ=-JI$), and consider the function 
\[
f:q\mapsto I\cos(\Re q)+J\sin(\Re q).
\]
This is a slice function (induced by the stem function
$z\mapsto I\tensor\cos(\Re z) + J\tensor \sin(\Re(z))$.)
It is not regular because it is not open,
although both $Tr(f)=f+f^c=0$ and $N(f)=ff^c=1$ are
regular (in fact constant) functions.
\end{remark}

\begin{definition}
A function $f:\H\to\H$ is {\em rotationally equivariant}
if $R_I(f)=f$ for all $I\in\S$.%
\footnote{see definition~\ref{defRw} for the notion $R_I(f)$.}
\end{definition}

\begin{lemma}\label{LemRotEq}
For a function $f:\H\to\H$, the following conditions are equivalent:
\begin{enumerate}
\item
$f$ is rotationally equivariant.
\item
$R_I(f)=f$ for all $I\in\S$.
\item
$S_I(f(q))=f(S_I(q))$ for 
all $I\in\S$ and $q\in\H$.
\item
If $g:q\to g\cdot q$ denotes the $\R$-linear action of 
$SO(3,\R)$ on $\H$ which is trivial on $\R$ and which is the natural
orthogonal transformations on $\left<I,J,K\right>=\R^3$,
then
$f:\H\to\H$ is equivariant for this action, i.e., $g\cdot f(q)=f(g\cdot q)$
for all $g\in SO(3,\R)$, $q\in \H$.
\item
$f$ is induced by a stem function $F:\C\to \H\tensor\C$ with
$F(\C)\subset \R\tensor\C$,
i.e., both $F_1$ and $F_2$ are real-valued for $F=F_1+ F_2\imath$.
\item
$f$ is induced by a stem function $F$ and $f$ is slice-preserving,
i.e., $f(\C_I)\subset\C_I$, $\forall I\in\S$.
\end{enumerate}
\end{lemma}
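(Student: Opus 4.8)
\section*{Proof proposal}

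The plan is to prove all six conditions equivalent by closing the cycle
$(1)\Leftrightarrow(2)\Leftrightarrow(3)\Leftrightarrow(4)\Rightarrow(5)\Leftrightarrow(6)\Rightarrow(2)$,
arranged so that the only substantial step is the single implication $(4)\Rightarrow(5)$; all the others are either definitional or an immediate consequence of results already established. First I would dispose of the routine links. The equivalence $(1)\Leftrightarrow(2)$ is merely the definition of rotational equivariance. For $(2)\Leftrightarrow(3)$ I would unravel Definition~\ref{defRw}: the identity $(R_If)(q)=S_I^{-1}(f(S_I(q)))=f(q)$ holds for all $q$ precisely when $f(S_I(q))=S_I(f(q))$. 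For $(3)\Leftrightarrow(4)$ I would note that each $S_I$ with $I\in\S$ is an element of the $SO(3,\R)$-action of $(4)$ (it fixes $\R$ pointwise and acts orthogonally on the imaginary $3$-space), that equivariance is preserved under composition and inversion of group elements, and that by Lemma~\ref{generate-so} the maps $S_w$ with $w\in\S$ generate all of $SO(3,\R)$; hence equivariance under the generators is equivalent to equivariance under the whole group.

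The remaining easy links involve the stem picture. The equivalence $(5)\Leftrightarrow(6)$ is exactly Proposition~\ref{slpreschar}, which identifies slice-preserving slice functions with those whose stem components $F_1,F_2$ are real-valued. For $(5)\Rightarrow(2)$ I would invoke Lemma~\ref{lemstemrot}: $R_wf$ is induced by $S_{w^{-1}}(F)$ acting on the first factor of $\H\tensor\C$, and since $S_{w^{-1}}$ fixes $\R$ pointwise it fixes the real-valued $F_1$ and $F_2$, so $S_{w^{-1}}(F)=F$ and hence $R_wf=f$ for every $w\in\S$.

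The heart of the argument, and the step I expect to be the main obstacle, is $(4)\Rightarrow(5)$: one must manufacture the slice (stem) structure out of pure equivariance, with no regularity assumption on $f$. Here I would fix $x,y\in\R$ with $y>0$ and a base unit $I_0\in\S$, and exploit that $SO(3,\R)$ acts transitively on $\S$ with point stabilizer $\mathrm{Stab}(I_0)\cong SO(2)$, the group of rotations about the $I_0$-axis. Choosing $g$ with $g\cdot I_0=J$, equivariance applied to $q=x+yI_0$ gives $f(x+yJ)=g\cdot f(x+yI_0)$, since the action is trivial on $\R$; taking $J=I_0$ shows that $f(x+yI_0)$ is fixed by $\mathrm{Stab}(I_0)$. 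As the fixed subspace of this $SO(2)$ in $\H$ is exactly $\C_{I_0}$ (it fixes $\R$ and the $I_0$-axis and rotates the orthogonal $2$-plane), we may write $f(x+yI_0)=\alpha(x,y)+I_0\beta(x,y)$ with $\alpha,\beta\in\R$. Applying the $\R$-linear map $g$ then yields $f(x+yJ)=\alpha(x,y)+J\beta(x,y)$ for every $J\in\S$, with $\alpha,\beta$ independent of $J$.

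This last identity is precisely the slice representation $f(x+yI)=F_1(x+iy)+IF_2(x+iy)$ with real-valued components $F_1=\alpha$ and $F_2=\beta$; extending $F$ to the lower half-plane by the required symmetry and invoking Lemma~\ref{stem-basic} confirms that $F$ is a genuine stem function inducing $f$, which is condition $(5)$. The only points needing care are the real points and the sign of $y$: for $x\in\R$ the value $f(x)$ is fixed by all of $SO(3,\R)$, hence lies in $\R$, forcing $F_2$ to vanish there consistently with the stem condition; and the two representations $x+yJ=x+(-y)(-J)$ of a non-real point are reconciled exactly by the symmetry $F_1(\bar z)=F_1(z)$, $F_2(\bar z)=-F_2(z)$ guaranteed by Lemma~\ref{stem-basic}. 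Since no continuity or differentiability of $f$ enters anywhere, the argument applies to arbitrary functions $f:\H\to\H$, as required, and the cycle is closed.
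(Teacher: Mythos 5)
Your proposal is correct and takes essentially the same route as the paper: the easy links are dispatched identically, and the substantial step (the paper phrases it as $(3)\Rightarrow(5)$, using $(4)$ midway) likewise first pins $f(x+yI_0)$ into $\C_{I_0}$ via the stabilizer of $I_0$ --- the paper uses the single involution $S_{I_0}$, whose fixed set is exactly $\C_{I_0}$, where you use the full $SO(2)$ stabilizer --- and then propagates the formula $f(x+yJ)=\alpha+J\beta$ by transitivity of the rotation group on $\S$, finishing with Lemma~\ref{stem-basic}. The difference is purely cosmetic.
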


\begin{proof}
The equivalence $(5)\iff(6)$ is well-known.
$(1)\iff(2)\iff(3)$ follow directly form the respective definitions.
Lemma~\ref{generate-so} implies $(3)\iff(4)$.
$(5)\Rightarrow(2)$ is easy.
Finally, assume $(3)$. Fix $I\in\S$. Then $(3)$ implies
$S_I(f(q))=f(S_I(q))$. For $q\in \C_I$ we have $S_I(q)=q$ and therefore
$S_I(f(q))=f(q)$, which implies $f(q)\in\C_I$. Thus $f(\C_I)\subset \C_I$.
We define functions $g,h:\C\to\R$ such that
\[
f(x+yI)=g(x+yi)+h(x+yi)I\ \ (x,y\in\R)
\]
Now, for every $J\in\S$, there is an orthogonal transformation $\phi$
fixing $\R$ with $\phi(I)=J$. Using $(4)$, we have
\begin{align*}
f(x+yJ) &=f(\phi(x+yI))=\phi( f(x+yI))\\
        &=\phi\left(
g(x+yi)+h(x+yi)I\right)= g(x+yi)+h(x+yi)J
\end{align*}

Thus $f:\H\to\H$ is induced by the stem function $F$ given as
$F(x+yi)=g(x,y) \tensor 1 + h(x,y)\tensor i$.
(The fact $\overline{F(z)}=F(\bar z)$ 
follows from Lemma~\ref{stem-basic}.)
This completes the proof of $(3)\Rightarrow(5)$.
\end{proof}

\begin{lemma}\label{stem-real-fct}
Let $h:\H\to\R$. 
Then the following conditions are equivalent:
\begin{enumerate}
\item
$h$ is induced by a stem function.
\item
$h$ obeys the representation formula.
\item
$h$ is rotationally equivariant.
\item
$h$ is rotationally invariant.
\end{enumerate}
Moreover, in this case the stem function $H$ 
can be defined as 
\[
H(x+yi)=h(x+yI)\tensor 1
\]
for all $x,y\in\R$, $I\in\S$.
\end{lemma}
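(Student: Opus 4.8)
The plan is to route every equivalence through condition~(1). The equivalence $(1)\iff(2)$ requires no work specific to real-valued functions: a function obeys the representation formula exactly when it is a slice function, i.e.\ when it is induced by a stem function, which is the characterization of slice functions recalled in the prerequisites. So the genuine content is to tie condition~(1) to the two rotational conditions $(3)$ and $(4)$, and it is here that the hypothesis $h(\H)\subset\R$ becomes decisive.

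The key step, which I expect to be the main obstacle, is the structural claim that \emph{if $h:\H\to\R$ is induced by a stem function $H=H_1+H_2\imath$, then necessarily $H_2\equiv 0$ and $H_1$ is real-valued}. To prove it, fix $z=\alpha+i\beta\in\C$ and consider the map $I\mapsto h(\alpha+I\beta)=H_1(z)+I\,H_2(z)$ on $\S$. Right multiplication by the fixed quaternion $H_2(z)$ is a conformal $\R$-linear map (scaling by $|H_2(z)|$ composed with a rotation), hence it carries the imaginary unit $2$-sphere $\S$, centered at $0$, onto a round $2$-sphere of radius $|H_2(z)|$ centered at $0$; translating by $H_1(z)$, the image $\{H_1(z)+I\,H_2(z):I\in\S\}$ is again a round $2$-sphere of radius $|H_2(z)|$. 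Since $h$ is real-valued this image must be contained in the $1$-dimensional set $\R$, which forces the radius to vanish, i.e.\ $H_2(z)=0$. Then $h(\alpha+I\beta)=H_1(z)$, and reality of $h$ gives $H_1(z)\in\R$. As $z$ was arbitrary, $H_2\equiv 0$ and $H_1$ is real-valued.

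With this claim in hand the remaining implications are short. From $H_2\equiv 0$ together with Lemma~\ref{LemRotInv} (where $F_2=0$ is equivalent to rotational invariance) we obtain $(1)\Rightarrow(4)$; from $H_1,H_2$ both real-valued together with Lemma~\ref{LemRotEq} (where $F_1,F_2$ real-valued is equivalent to rotational equivariance) we obtain $(1)\Rightarrow(3)$. Conversely, $(4)\Rightarrow(1)$ is exactly the content of Lemma~\ref{LemRotInv}, and $(3)\Rightarrow(1)$ that of Lemma~\ref{LemRotEq}, since each of those lemmas exhibits a stem function inducing $h$. This closes the cycle and yields the equivalence of all four conditions.

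Finally, for the displayed formula for $H$: once $H_2\equiv 0$ and $H_1$ is real, the defining relation $h(x+yI)=H_1(x+yi)+I\,H_2(x+yi)$ collapses to $h(x+yI)=H_1(x+yi)$, which is independent of $I\in\S$ (consistent with the rotational invariance just established, so that the right-hand side below is well defined). Hence $H(x+yi)=H_1(x+yi)\tensor 1=h(x+yI)\tensor 1$ for every $I\in\S$, which is the asserted expression for the inducing stem function.
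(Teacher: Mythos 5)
Your proof is correct and follows essentially the same route as the paper: both reduce everything to the stem-function condition, observe that reality of $h$ forces $F_2\equiv 0$ and $F_1$ real-valued, and then invoke Lemmas~\ref{LemRotInv} and \ref{LemRotEq} in both directions. The only difference is that you supply an explicit (and valid) geometric argument for why $H_2$ must vanish — the image $\{H_1(z)+IH_2(z):I\in\S\}$ being a round $2$-sphere that must fit inside $\R$ — whereas the paper simply asserts this implication without detail.
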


\begin{proof}
Assume that $h$ is induced by a stem function $H$.
The formula
\[
h(x+yI)=F_1(x+yi) + I F_2(x+yi)\ \forall x,y\in\R, I\in\S
\]
combined with $h(q)\in\R,\ \forall q\in\H$, implies that $F_2$ vanishes
and that $F_1(\C)\subset\R$.
Then $h$ is rotationally equivariant (by Lemma~\ref{LemRotEq} $(5)\Rightarrow(1)$)
and rotationally invariant (by Lemma~\ref{LemRotInv} $(5)\Rightarrow(1)$).

Conversely, by Lemma~\ref{LemRotEq} $(1)\Rightarrow(5)$,
(resp.~ by Lemma~\ref{LemRotInv} $(1)\Rightarrow(5)$), imply that $h$ admits
a stem function if $h$ is rotationally equivariant, resp.~rotationally
invariant.
\end{proof}

\begin{proposition}\label{real-part}
Let $h:\H \to\mathbb{R}$ be a 
slice
function with $\Delta'h=0$.

Then
there exists a slice-preserving
regular function $f$ such that $h=\Re  \, (f)$.
\end{proposition}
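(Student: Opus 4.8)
The plan is to reduce the statement to the classical fact that a real harmonic function on a simply-connected planar domain is the real part of a holomorphic function, and then to check that the holomorphic completion can be taken to be a stem function. First I would exploit that $h$ is real-valued, hence slice-preserving, so that by the Remark following the definition of the slice conjugate $h^c=h$ and $Tr(h)=h+h^c=2h$. Consequently
\[
\Delta' h=\tfrac12\Delta_*(Tr(h))=\tfrac12\Delta_*(2h)=\Delta_* h,
\]
so the hypothesis $\Delta'h=0$ is equivalent to $\Delta_* h=0$. Since $h$ is defined on all of $\H=\Omega_\C$, its defining domain is $D=\C$, which is simply-connected.

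Next I would pass to the stem function. By Lemma~\ref{stem-real-fct} the real-valued slice function $h$ is induced by a stem function $H=H_1\tensor 1$, i.e.~with $H_2=0$ and $H_1:\C\to\R$; the stem condition $H(\bar z)=\overline{H(z)}$ forces $H_1(\bar z)=H_1(z)$, so $H_1$ is even in the imaginary variable $y$. By the Corollary characterizing the kernel of $\Delta_*$ (a slice function is annihilated by $\Delta_*$ iff its stem function is harmonic), the vanishing $\Delta_* h=0$ is equivalent to $H_1$ being a real harmonic function on $\C$.

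Now I would construct the harmonic conjugate. As $H_1$ is harmonic on the simply-connected domain $\C$, there is a real-valued $G_2:\C\to\R$, unique up to an additive constant, such that $H_1+iG_2$ is holomorphic in the ordinary sense (identifying $\R\tensor\C$ with $\C$); I normalize it by $G_2(0)=0$. The one step requiring genuine care, and the only quaternion-specific obstacle, is verifying that $G_2$ is a legitimate stem-function component, i.e.~that $G_2(\bar z)=-G_2(z)$. To see this I would set $\tilde G_2(x,y):=-G_2(x,-y)$ and check, using that $H_1$ even in $y$ makes $\partial_xH_1$ even and $\partial_yH_1$ odd in $y$, that $\tilde G_2$ is again a harmonic conjugate of $H_1$ satisfying the same normalization $\tilde G_2(0)=0$; uniqueness then gives $\tilde G_2=G_2$, so $G_2$ is odd in $y$, which is exactly the required parity.

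Finally I would assemble $G=H_1+G_2\imath$ and put $f=\mathcal{I}(G)$. The parities of $H_1$ (even) and $G_2$ (odd) make $G$ a stem function, $G$ is holomorphic so $f$ is regular, and $G_1=H_1$, $G_2$ are real-valued so by Proposition~\ref{slpreschar} $f$ is slice-preserving. For $q=x+yI$ with $I\in\S$ we then have $f(q)=H_1(x+yi)+I\,G_2(x+yi)$ with $H_1,G_2\in\R$, whence $\Re(f)(q)=H_1(x+yi)=h(q)$, exhibiting the desired slice-preserving regular $f$ with $h=\Re(f)$. The essential insight is the reduction $\Delta'h=\Delta_* h$ for real-valued slice functions, which converts the quaternionic problem into the standard existence of a holomorphic primitive for a harmonic function; all that remains beyond classical complex analysis is the parity bookkeeping guaranteeing that the harmonic conjugate is a stem function.
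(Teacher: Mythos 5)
Your proof is correct, but it follows a genuinely different route from the paper's. The paper reduces the statement to its general structure theorem (Theorem~\ref{harm-star}(2)): after observing, as you do, that real-valuedness forces $\Delta'h=\Delta_*h$, it checks the reality hypotheses of that theorem on $D\cap\R$ (via rotational invariance, which makes $\partial_I h$ vanish on the real line), obtains a decomposition $h=g+k$ with $g$ regular, $k$ anti-regular, both slice-preserving, and then runs a Liouville-type argument: from $g(q)-\overline{k(q)}=\overline{g(q)}-k(q)$ it concludes that a function which is simultaneously regular and anti-regular is constant, whence $k=\bar g+\text{const}$ and $h=2\Re(g)+\text{const}$. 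You instead work entirely at the level of the stem function: Lemma~\ref{stem-real-fct} gives $H=H_1\tensor 1$ with $H_1$ real, even in $y$ and harmonic, and you construct the classical harmonic conjugate $G_2$ directly, with the one genuinely quaternionic point being the parity check $G_2(\bar z)=-G_2(z)$ (which you handle correctly by uniqueness of the normalized conjugate), so that $G=H_1+G_2\imath$ is a holomorphic stem function and $f=\mathcal{I}(G)$ is slice-preserving by Proposition~\ref{slpreschar}. Your argument is more self-contained and elementary --- it essentially inlines, for the real-valued case, the holomorphic/anti-holomorphic splitting that underlies the proof of Theorem~\ref{harm-star}, and it produces $f$ explicitly without needing the constancy argument --- while the paper's version is shorter on the page because it leverages machinery already established. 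Both correctly use that the domain is all of $\H$ (so $D=\C$ is simply-connected), which is where the existence of the conjugate, respectively the hypothesis of Theorem~\ref{harm-star}, comes in.
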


\begin{proof}
Since $h$ is real-valued and a slice function, it is also
rotationally equivariant and rotationally invariant
(due to Lemma~\ref{stem-real-fct}).
Being rotationally equivariant implies $\Delta'h=\Delta_*h$.
Being rotationally invariant implies
\[
h(x+yI)=h(x-yI) \,\, \forall x,y\in\R,\ \forall
I\in\S
\]
which in turn implies that
\[
\partial_I h
\]
vanishes on the real line.
It follows that $(\partial_*f)(x)=\frac 12 (\partial_1 f)(x)\in\R$
for all $x\in\R$.
Hence all the assumptions of Theorem~\ref{harm-star} (2)
are fulfilled, and we may deduce that $h=g+k$ where 
$g$ is regular, $k$ is anti-regular and both $g$ and $k$
are slice-preserving.
The condition $h(q)\in\R$ is equivalent to $h(q)-\overline{h(q)}=0$.
Therefore
\[
g(q)-\overline{k(q)}=\overline{g(q)}-k(q)\,\, \forall q\in\H.
\]
The left hand side of this equation is a regular function, while the
right hand side is anti-regular. Thus both must be constant.
That the right hand side is constant, implies:
\[
k(q)= \overline{g(q)}-\overline{g(0)}+k(0)\,\, \forall q\in\H.
\]
Hence
\[
h(q)=g(q)+k( q)=g(q)+\overline{g(q)}-\overline{g(0)}+k(0)
=2 \Re\left(g(q)\right)-\overline{g(0)}+k(0)
\ \ \forall q\in\H.
\]
Finally, $h(\H)\subset\R$ implies $-\overline{g(0)}+k(0)\in\R$.
This proves the assertion for 
\[
f(q)=2 g(q)-\overline{g(0)}+k(0).
\]
\end{proof}

\begin{proposition}
Let $u$ be real-valued and 
rotationally invariant (in the sense of Definition \ref{def-rot})
$C^2$-function.
Let $f$ be regular.

Then
\[
\Delta'(u\circ f)=\int_{\S}
\left( (\Delta_* u)\circ (R_wf)\right)
|(\partial_*f)(S_wq)|^2 d\mu(w)\quad 
\]
\end{proposition}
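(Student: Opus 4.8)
The plan is to reduce everything to a chain-rule computation for $\Delta_*$ applied to $u$ composed with a single regular function, and then to reassemble by averaging. I would first record the reductions. Because $u$ is real-valued, $S_w^{-1}$ fixes the number $u(f(S_wq))\in\R$, and because $u$ is rotationally invariant, $u\circ S_w^{-1}=u$; together these give $R_w(u\circ f)=u\circ R_wf$, whence
\[
\Delta'(u\circ f)(q)=\Big(\Delta_*\!\int_\S u\circ R_wf\,d\mu(w)\Big)(q)=\int_\S \Delta_*\big(u\circ R_wf\big)(q)\,d\mu(w),
\]
the last step being differentiation under the integral sign (all data are $C^2$ in $q$ and $\mu$ is finite). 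Equivalently, $\int_\S u\circ R_wf\,d\mu(w)$ is a spherical average, hence rotationally invariant, hence a slice function by Lemma~\ref{stem-real-fct}. Next, combining Lemma~\ref{lemstemrot} with Proposition~\ref{delta-stem} gives $\partial_*(R_wf)=R_w(\partial_*f)$, so $\partial_*(R_wf)(q)=S_w^{-1}\big((\partial_*f)(S_wq)\big)$; since $S_w^{-1}$ is an isometry of $\R^4$ this yields $|\partial_*(R_wf)(q)|^2=|(\partial_*f)(S_wq)|^2$, which is exactly the conformal factor on the right-hand side. It remains to analyze $\Delta_*(u\circ g)$ for a regular $g$ (to be applied with $g=R_wf$, regular by the corollary to Lemma~\ref{lemstemrot}).

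For the chain rule, fix $I\in\S$ and write $\Delta_*$ on $\C_I$ as the flat Laplacian $\partial_x^2+\partial_y^2$ in coordinates $q=x+yI$. In terms of the real components $g^a$ of $g$,
\[
\Delta_*(u\circ g)(q)=\sum_{a,b}(\partial_a\partial_b u)(g(q))\big(\partial_xg^a\,\partial_xg^b+\partial_yg^a\,\partial_yg^b\big)+\sum_a(\partial_a u)(g(q))\,(\partial_x^2+\partial_y^2)g^a .
\]
The second sum vanishes because $g$ is regular, hence harmonic on each slice componentwise (Lemma~\ref{splitting}). For the first sum I would use that regularity, $\bar\partial_*g=0$, is precisely the relation $\partial_yg=I\,\partial_xg$, so that with $V:=\partial_*g(q)=\partial_xg$ the tangent vectors $\partial_xg=V$ and $\partial_yg=IV$ are orthogonal of equal length $|V|=|\partial_*g(q)|$ (the restriction of $g$ to the slice is conformal). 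Hence
\[
\Delta_*(u\circ g)(q)=\mathrm{Hess}_u(g(q))(V,V)+\mathrm{Hess}_u(g(q))(IV,IV).
\]

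The main obstacle is the passage from this Hessian expression to the asserted form $\big((\Delta_*u)\circ g\big)(q)\,|\partial_*g(q)|^2$. Since $\{V/|V|,\,IV/|V|\}$ is an orthonormal frame of the tangent plane $T=\langle V,IV\rangle$ to the image of the slice, the expression equals $|V|^2$ times the trace of $\mathrm{Hess}_u(g(q))$ restricted to $T$, whereas $(\Delta_*u)(g(q))$ is the trace of the same Hessian restricted to the target slice $\C_{I_0}$ through $g(q)$, with $I_0=\Im(g(q))/|\Im(g(q))|$. These two planes coincide exactly when $g$ sends the slice into a single slice: this holds automatically when $g$ is slice-preserving (in particular when $f$ is slice-preserving, where $R_wf=f$ and the whole identity is immediate), but for a general regular $g$ the image of a slice is not a slice and the two traces differ. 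To control this I would invoke rotational invariance of $u$: writing $u=\tilde u(\Re p,|\Im p|)$, the Hessian at a non-real point splits into the block on $\C_{I_0}$ of trace $\tilde u_s{}_s+\tilde u_t{}_t=(\Delta_*u)$ and a transverse block on $\langle J_0,K_0\rangle$ equal to the scalar $\tilde u_t/|\Im(g(q))|$. The delicate point, on which the whole proposition rests, is then to show that after integrating over $w\in\S$ (equivalently over the target sphere $\S_{f(q)}$) against the rotation-invariant $\mu$, the transverse contribution together with the defect in the slice block reorganizes precisely into $\int_\S(\Delta_*u)(R_wf(q))\,|(\partial_*f)(S_wq)|^2\,d\mu(w)$. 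This reconciliation of the genuinely non-conformal pointwise behaviour with the conformal form of the answer is exactly where both hypotheses on $u$, real-valuedness and rotational invariance, must be used in an essential way, and I expect it to be the hardest part of the argument.
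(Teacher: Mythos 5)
Your preparatory reductions are all correct and coincide with (indeed, make precise) the paper's very compressed argument: $R_w(u\circ f)=u\circ R_wf$ from real-valuedness plus rotational invariance of $u$, differentiation under the integral sign, the identity $|\partial_*(R_wf)(q)|=|(\partial_*f)(S_wq)|$ via Lemma~\ref{lemstemrot}, and the slice-wise computation $\Delta_*(u\circ g)(q)=\mathrm{Hess}\,u(g(q))(V,V)+\mathrm{Hess}\,u(g(q))(IV,IV)$ with $V=\partial_*g(q)$, the first-order term dying by the Splitting Lemma. But your proof stops exactly at the decisive step: you do not show that the trace of $\mathrm{Hess}\,u(g(q))$ over the image tangent plane $\langle V,IV\rangle$ turns into $(\Delta_*u)(g(q))\,|V|^2$, i.e.\ the trace over the \emph{target slice} plane, and you merely express the hope that averaging over $w$ reconciles the two. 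The paper's one-line proof hides the same issue: ``the complex computation applied on $\C_I$'' presupposes that $f$ maps $\C_I$ into a single slice, which is exactly the slice-preserving case.

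The missing step cannot be supplied, because the averaging does not perform the hoped-for reorganization: the stated identity fails for general regular $f$. Take $u(p)=|\Im(p)|^2$, so that $\Delta_*u\equiv 2$ and $\mathrm{Hess}\,u$ is twice the orthogonal projection onto the imaginary $3$-space, and $f(q)=qj$, so that $\partial_*f\equiv j$ and $(R_wf)(q)=q\,wjw^{-1}$. Your own formula gives, at $q=x+yi$,
\begin{equation*}
\Delta_*\bigl(u\circ R_wf\bigr)(q)=2\Bigl(2|V_w|^2-\langle V_w,1\rangle^2-\langle V_w,i\rangle^2\Bigr)
=4-2\,\langle S_w(i),j\rangle^2 ,\qquad V_w=\partial_*(R_wf)(q),
\end{equation*}
so that $\Delta'(u\circ f)(q)=4-2c$ with $c=\int_{\S}\langle S_w(i),j\rangle^2\,d\mu(w)\in(0,1)$ (for the rotation-invariant $\mu$ one computes $c=4/15$, giving $52/15$), whereas the right-hand side of the proposition equals $\int_{\S}2\,|j|^2\,d\mu(w)=2$. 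The defect $2|V_w|^2-\langle V_w,1\rangle^2-\langle V_w,i\rangle^2-|V_w|^2=|V_w|^2-|P_{\C_i}V_w|^2\ge 0$ has a sign, so no cancellation under $\int_{\S}d\mu(w)$ is possible; it vanishes exactly when $\partial_*(R_wf)(q)\in\C_i$, i.e.\ essentially when $f$ is slice-preserving. In that case $R_wf=f$, each slice is mapped into itself, and the one-variable chain rule proves the formula slice by slice with no averaging needed --- which is precisely the hypothesis of Proposition~\ref{armcomp}, where the formula is actually used. So your instinct that this reconciliation is ``the hardest part'' was sound; the honest conclusion is that it is a genuine gap, in your proposal and in the paper's statement alike, and the proposition should carry the additional assumption that $f$ is slice-preserving.
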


 \begin{proof}
  Note that a real valued rotationally invariant function $u:\H\to\R$ is
  automatically a slice function. (Lemma \ref{stem-real-fct}.)
  Hence $\Delta'u$ is well defined.

  The claim of the proposition
  is a consequence of the complex computation \\ 
$\Delta (u \circ f)= \left( (\Delta u)\circ f\right)
|(\partial_*f)(z)|^2$ 
(applied on $\C_I$) and of the definition of $\Delta '$. For more details, see the proof of the next proposition.
\end{proof}

\begin{proposition}\label{armcomp}
Let $u \colon \H \to\mathbb{R}$ be a rotationally invariant
$C^2$-function with $\Delta'u=0$.
Let $f \colon \H  \to \H  $ be a slice preserving regular
function, then $u \circ f$ is such that $\Delta' (u\circ f)=0.$
\end{proposition}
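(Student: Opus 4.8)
The plan is to reduce the statement to the integral formula established in the preceding proposition and then to show that the integrand vanishes identically. Since $u$ is a real-valued, rotationally invariant $C^2$-function and $f$ is regular, the hypotheses of that proposition are satisfied, so I would begin by recording
\[
\Delta'(u\circ f)(q)=\int_{\S}
\left( (\Delta_* u)\circ (R_wf)\right)(q)\,
|(\partial_*f)(S_wq)|^2\, d\mu(w).
\]
Everything then hinges on the factor $(\Delta_* u)\circ(R_w f)$, and the whole argument comes down to proving that $\Delta_* u$ is the zero function.

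The key step is to upgrade the hypothesis $\Delta'u=0$ into $\Delta_* u=0$. Because $u$ is real-valued and rotationally invariant, Lemma~\ref{stem-real-fct} shows that $u$ is in addition rotationally equivariant and is induced by a stem function, so that $\Delta_* u$ is defined everywhere, including at real points. Rotational equivariance gives $R_w u=u$ for all $w\in\S$, hence $\int_{\S} R_w u\, d\mu(w)=u$, and therefore $\Delta'u=\Delta_*\!\left(\int_{\S} R_w u\, d\mu(w)\right)=\Delta_* u$, exactly as in the proof of Proposition~\ref{real-part}. Combining this identity with the assumption $\Delta'u=0$ yields $\Delta_* u\equiv 0$ on all of $\H$.

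With $\Delta_* u\equiv 0$ in hand, the factor $(\Delta_* u)\circ(R_w f)$ vanishes identically for every $w$, so the integrand above is zero and $\Delta'(u\circ f)\equiv 0$, as claimed. I would also record why the slice-preserving hypothesis on $f$ is natural here: a slice-preserving regular function is rotationally equivariant by Lemma~\ref{LemRotEq}, and $u$ is rotationally invariant, so the composition $u\circ f$ is itself rotationally invariant and real-valued, hence a slice function by Lemma~\ref{stem-real-fct}; this guarantees that $\Delta'(u\circ f)$ is meaningful everywhere rather than only off $\R$. The only genuine obstacle is the identification $\Delta'u=\Delta_* u$ for rotationally invariant real-valued $u$; once equivariance of $u$ is extracted from Lemma~\ref{stem-real-fct}, the conclusion follows immediately from the integral formula.
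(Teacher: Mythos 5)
Your proof is correct and follows essentially the same route as the paper's: the paper likewise reduces to the chain-rule identity $\Delta_*(u\circ R_w f)=\bigl((\Delta_* u)\circ (R_w f)\bigr)\,|(\partial_* f)(S_w q)|^2$ and integrates over $\S$, establishing along the way the equivariance identity $R_w(u\circ f)=u\circ (R_w f)$ that underlies the integral formula you invoke from the preceding proposition. Your explicit verification that $\Delta_* u=\Delta' u=0$ via rotational equivariance of $u$ is the one step the paper leaves tacit in this proof (the same identity is spelled out in the proof of Proposition~\ref{real-part}), so your write-up is, if anything, slightly more complete.
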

\begin{proof}
  First observe that  $u$ is a slice function,
  because it is real valued and rotationally
  invariant (Lemma \ref{stem-real-fct}).
Next we prove that, under the hypotheses of the proposition,
\begin{equation}\label{invrot}
R_w (u \circ f)= u \circ (R_w (f)) \,\,\, \forall \,\, w \in \S .
\end{equation}
Indeed $\forall \,\, w \in \S $: 
\begin{align*}
 R_w ( u\circ f) &= w(u(f(w^{-1}qw)))w^{-1} = w(u(R_w (f))w^{-1} \\
&= S_w ^{-1} (u \circ R_w (f)) = u ( R_w (f)).
\end{align*}
Then, by definition, $\Delta' (u \circ f) =\Delta_* \int\limits_{w \in \S } R_w (u \circ f) d \mu.$ \\
By \eqref{invrot}, 
$$\Delta_* \int\limits_{\S } R_w(u \circ f) d \mu(w)=\Delta_* \int\limits_{\S } u \circ R_w(f) d \mu(w)=  \int\limits_{ \S } (\Delta_* (u \circ R_w f) d \mu(w) =$$
$$=\int\limits_{\S }
{(\Delta_* u)\circ (R_w(f))}{|(R_w(f))'|^2} d \mu(w)=\int\limits_{\S }
{(\Delta_* u)\circ (R_w(f))}{|(f)' (S_w(q))|^2} d \mu(w)=0$$  
where $g' : =\partial_* g$.
\end{proof}

\begin{proposition}\label{isol-zero}
Let $u \colon \H  \to \mathbb{R}$ be 
a $C^2$-function such that $\Delta' u\equiv 0 $ on $\H\setminus\R$. 
Then 
$u$ admits no real isolated zero.
\end{proposition}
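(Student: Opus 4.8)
The plan is to argue by contradiction, reducing the quaternionic statement to the classical fact that a real harmonic function on a planar domain cannot attain a strict local minimum.

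Suppose $a\in\R$ were an isolated zero of $u$. Since $u$ is continuous and real valued and the punctured ball $B_\varepsilon(a)\setminus\{a\}\subset\H=\R^4$ is connected, $u$ would have constant sign on $B_\varepsilon(a)\setminus\{a\}$ for $\varepsilon$ small. Replacing $u$ by $-u$ if necessary (this leaves both the hypothesis $\Delta'u\equiv 0$ and the zero set unchanged, since $\Delta'$ is $\R$-linear), I may assume $u>0$ on $B_\varepsilon(a)\setminus\{a\}$. I then introduce the rotational average $U=\int_\S R_wu\,d\mu(w)$ occurring in the definition of $\Delta'$. Because $u$ is real valued and each $S_w$ fixes $\R$ pointwise, one has $(R_wu)(q)=S_w^{-1}(u(S_wq))=u(S_wq)$, so $U(q)=\int_\S u(S_wq)\,d\mu(w)$ is again real valued and (differentiating under the integral over the compact set $\S$) of class $C^2$.

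Two elementary observations drive the argument. First, $S_wa=a$ for the real point $a$, so $U(a)=\int_\S u(a)\,d\mu(w)=u(a)=0$, using $\mu(\S)=1$. Second, each $S_w$ is an isometry fixing $a$, whence $|S_wq-a|=|q-a|$ and $S_wq=a$ only when $q=a$; therefore, for $0<|q-a|<\varepsilon$ every point $S_wq$ lies in $B_\varepsilon(a)\setminus\{a\}$, so $u(S_wq)>0$ and consequently $U(q)>0$. Thus $U$ attains a strict local minimum, equal to $0$, at $a$. Now fix any $I\in\S$ and restrict to the slice $\C_I$. On $\C_I$ the operator $\Delta_*$ acts as the complex Laplacian, which for the real-valued $U$ is simply the ordinary two-dimensional Laplacian of $U|_{\C_I}$ (cf.\ the remark after Definition~\ref{def-laplace-star}). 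The hypothesis $\Delta'u\equiv 0$ on $\H\setminus\R$ gives $\Delta_*U\equiv 0$ on $\H\setminus\R$, so $U|_{\C_I}$ is harmonic on $\C_I\setminus\R$. Since $U$ is $C^2$, its Laplacian on $\C_I$ is continuous and vanishes on the dense set $\C_I\setminus\R$, hence vanishes identically, so $U|_{\C_I}$ is harmonic on a full neighbourhood of $a$ in $\C_I$. The mean value property then yields $0=U(a)=\frac{1}{2\pi}\int_0^{2\pi}U(a+re^{I\theta})\,d\theta$ for all small $r>0$, contradicting $U>0$ on the punctured disk. This contradiction shows that $a$ cannot be isolated.

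The one step that genuinely requires care, and which I regard as the crux, is that harmonicity of $U|_{\C_I}$ is needed precisely \emph{at} the real point $a\in\R$, whereas the hypothesis $\Delta'u=0$ is only available \emph{off} $\R$; it is exactly the $C^2$-regularity of $u$ (hence of $U$) together with continuity of the Laplacian that bridges this gap and lets the mean value identity be applied on a disk centred at $a$. The other ingredients — connectedness of the punctured ball to fix the sign of $u$, the identity $(R_wu)(q)=u(S_wq)$ for real-valued $u$, and $\mu(\S)=1$ to get $U(a)=u(a)$ — are routine.
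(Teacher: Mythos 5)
Your proposal is correct and follows essentially the same route as the paper: both arguments fix the sign of $u$ on a punctured neighbourhood of the putative isolated real zero, form the rotational average $\tilde u=\int_\S R_w u\,d\mu(w)$, observe it has a strict local minimum at $a$, and then restrict to a slice $\C_I$ where continuity of the ($C^2$) Laplacian extends harmonicity across $\R$, yielding the contradiction. Your explicit appeal to the mean value property is just an unwinding of the paper's final step that a harmonic function cannot have a strict local minimum.
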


\begin{remark}
  At a real point $\Delta'u$ is defined only if $u$ is a slice function.
  But in the proposition $\Delta'u$ is only considered for points
  outside $\R$. Hence one does not need to require $u$ to be
  a slice function.
\end{remark}

\begin{proof}
Assume the contrary, i.e., assume that
$u$ has an isolated zero in a point $a\in\R$. 
Then there exists an open neighborhood
$W$ of $a$ such that $u$ has no zero on $W\setminus\{a\}$.
For dimension reasons, $W\setminus\{a\}$ is connected. Thus $u$ is either 
everywhere positive or everywhere negative on $W\setminus\{a\}$.
Without loss of generality, assume that $u>0$ on $W\setminus\{a\}$.

Define
\[
\tilde u(q)=\int_\S \left(R_wu)\right(q) d\mu(w).
\]
For $q$ sufficiently close to $a$ (but $q\ne a$) we have
$\S_q\subset W\setminus\{a\}$. For such $q$ we
have 
\[
(R_wu)(q)>0\ \forall w\in\S
\]
and therefore $\tilde u(q)>0$.
On the other hand, $\tilde u(a)=u(a)=0$, because $a\in\R$.
Thus $\tilde u$ has a strict local minimum in $a$. 

By construction  $\Delta'u=\Delta_*\tilde u$
on $\H\setminus\R$. 
Fix $I\in\S$.
By definiton, on $\C_I$ the operator $\Delta_*$ agrees
with the ordinary complex Laplacian, if we identify $\C_I\simeq\C$ as usual.
Thus $\tilde u$ restricts to a $C^2$-function on $\C_I$ which
is harmonic on $\C_I\setminus\R$.
By continuity of $\Delta\tilde u$, the function $\tilde u$ is
harmonic on the whole of $\C_I$.
Thus we obtain a harmonic function on $\C_I\simeq\C$ with a strict local
minimum in $a$.
This is impossible.
\end{proof}

\section{A kind of Poisson formula}
\subsection{$\H$-valued measures}
\begin{theorem}\label{H-measure}
Let $p\in\H $ and let $S\subset\H $ be a $3$-dimensional 
sphere such that $p$ is in its interior.
Let $\Omega$ denote a circular domain containing both $S$ and its
interior.

Then there exists an $\H $-valued measure $\mu$ on $S$ which is absolutely
continuous with respect to the euclidean measure such that
\[
f(p)=\int_{S} f(q)d\mu(q)
\]
for every regular function $f$ defined on $\Omega$.
\end{theorem}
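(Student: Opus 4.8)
The plan is to produce the $\H$-valued measure on $S$ by combining the scalar Poisson kernel on each complex slice with the mean value machinery already developed, and then integrating over the sphere $\S$ of imaginary units. Without loss of generality, after a real translation I may assume that $S$ is the boundary sphere $\partial\B_\rho$ of the ball of radius $\rho$ centered at a real point (or at the origin); a genuine $3$-sphere $S$ in $\H$ with $p$ inside need not be centered at a real point, so the first task is to reduce to this symmetric situation. Here I would exploit that $f$ is regular on the \emph{circular} domain $\Omega$ containing $S$ and its interior: the value $f(p)$ for $p=a+bI$ is governed, via the General Mean Value Formula (Proposition~\ref{gen-mean}) and the representation formula, by the behaviour of $f$ on the rotated family of circles $a+bJ+re^{J\theta}$ as $J$ ranges over $\S$. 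So even if the geometric sphere $S$ is not rotationally symmetric, circularity of $\Omega$ lets me slide to a rotationally symmetric sphere sharing the same enclosed region, at the cost of tracking an absolutely continuous density.

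Concretely, I would first treat the real-center case. Fix an imaginary unit $I$ and identify $\C_I\simeq\C$. On the slice $\C_I$ the restriction $f_I$ splits by Lemma~\ref{splitting} as $g_I+h_IJ$ with $g_I,h_I$ holomorphic, hence harmonic as $\C$-valued functions. The classical complex Poisson formula then recovers $f_I(p)$ as an integral of $f_I$ against the scalar Poisson kernel $P_\rho(p,\zeta)$ over the circle $\partial\B_\rho\cap\C_I$. This already expresses $f(p)$ as an integral over a $1$-sphere living in a single slice. To promote this to an integral over the full $3$-sphere $S$, I would average over $\S$ using the rotation-invariant probability measure $\mu$ of Lemma~\ref{aver-sw}: the key point is that Corollary~\ref{meanvalue} and Proposition~\ref{gen-mean} already encode exactly how the slicewise circle integrals assemble, via the factor $(1-IJ)$, into a $3$-sphere integral reproducing $f(p)$. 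Combining the $(1-IJ)$ weight from the representation formula with the complex Poisson kernel $P_\rho$ produces an explicit $\H$-valued density $k(q)$ on $S$ with respect to euclidean surface measure, and I set $d\mu(q)=k(q)\,dV(q)$.

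For the general case where $S$ is off-center, I would write $f(p)$ using the Poisson formula on any slice through $p$, but now the enclosing region is not slice-symmetric, so I cannot directly average with $\mu$. Instead I would use that a regular $f$ on a circular domain is determined by its stem function $F$, and that the values of $f$ on all of $S$ determine $F$ on a comparison circle; the desired measure on $S$ is then obtained by pulling back the real-center measure through the (real-analytic, boundedly invertible) correspondence between $S$ and a symmetric comparison sphere, which preserves absolute continuity. \textbf{The main obstacle} I anticipate is precisely this last reduction: ensuring that for an arbitrary off-center $3$-sphere $S$ the reconstruction of $f(p)$ depends only on the boundary values of $f$ on $S$ (not on interior data) and that the resulting kernel is genuinely an $L^1$ density against $dV$ rather than a distribution. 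The regularity of $f$ and the harmonicity of the stem components should guarantee this, but controlling the density near the points of $S$ lying in a slice through $p$, where the complex Poisson kernel is well behaved but the $(1-IJ)$ weight interacts with the geometry, is where the real work lies.
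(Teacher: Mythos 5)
Your treatment of the real-centered case is essentially the paper's: slice, split, apply the complex Poisson kernel on $S\cap\C_I$, and average over $I\in\S$. The genuine gap is exactly where you locate it yourself, in the off-center case, and your proposed fix does not close it. You suggest pulling back the real-center measure ``through the correspondence between $S$ and a symmetric comparison sphere,'' but no such pointwise correspondence can carry the boundary values of $f$ on $S$ to its boundary values on a different sphere: a regular function's restriction to one $3$-sphere does not determine its restriction to another by composing with a diffeomorphism, so the pulled-back measure would reproduce $f(p)$ from data you do not actually have on $S$. A second, more geometric obstruction: if you project $S$ to the closed upper half-plane, you get a two-dimensional region $\bar G$, not a curve, and at points $t+yi\in\partial G$ the intersection $S\cap(t+y\S)$ degenerates to a single point, so no representation-formula reconstruction of $f(t+yI)$ from $f|_S$ is available there. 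Any attempt to write $f(p)$ as a boundary-type (harmonic-measure) integral over $\partial G$ therefore fails.

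The paper's resolution, which is the idea missing from your proposal, is to avoid boundary integrals in the slice altogether. One shows $G=\operatorname{int}\bar G$ is simply connected, takes a Riemann map $\psi:G\to\Delta$ with $\psi(\tilde p)=0$, and uses the mean value property averaged over \emph{all} radii $r\in[0,1]$ (with an absolutely continuous $\sigma$) to express $F(\tilde p)$ as an \emph{area} integral $\int_G F\,d\xi$ over the interior of $G$. Then for each interior point $t+yi\in G$ the set $\Sigma_{t,y}=S\cap(t+y\S)$ is a nondegenerate sphere, and the generalized representation formula (Proposition~\ref{gen-repr}), applied to antipodal pairs $q,\eta(q)\in\Sigma_{t,y}$ with the coefficients $M_1,M_2$, reconstructs $f(t+yI)$ as $\int_{\Sigma_{t,y}}f\,d\beta$ with $\beta$ absolutely continuous. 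Fubini over $G$ then yields an $\H$-valued density supported exactly on $S=\bigcup_{t+yi\in\bar G}\Sigma_{t,y}$. You would need both of these steps — the passage from a boundary integral to an area integral in the slice, and the two-point reconstruction on each $\Sigma_{t,y}$ — to complete your argument.
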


\begin{proof}
We first discuss the special case where $p\in\mathbb{R}$. In this case for
each $I\in\S$ the restriction of $f$ to $\mathbb{C}_I$ is holomorphic,
and $\mathbb{C}_I$ and $S$ intersect in a $1$-dimensional sphere which
contains $p$ in its interior. We thus may construct $d\mu$ first
taking the measure on $S\cap\mathbb{C}_I$ given by the classical Poisson
formula, and then averaging over $I\in\S$ with respect to any
probability measure of $\mathbb{S}.$

Now assume $p\not\in\mathbb{R}$. Fix $I\in\S$ such that $p\in\mathbb{C}_I$.
Let $c=s+vJ$ ($s,v\in\mathbb{R}, v>0, J\in\S$)
denote the center of the sphere $S$ and $\rho$ its radius.
Define $\bar G=\{t+yi\in\mathbb{C}: t,y\in\mathbb{R}, y\ge 0, \exists \,\, H\in\S: t+yH\in S\}$.

Then
\[
\bar G = \{ (t+yi): \exists \,\, H\in\S : (t-s)^2 + |yH-vJ|^2 = \rho^2 \}.
\]
We observe that $\S$ is connected and 
that $H\mapsto |yH-vJ|^2$ (for fixed $y,v>0,J\in\S$) defines
a continuous map which evidently takes its maximum in $H=-J$ 
(with $(y+v)^2$ as its value) and 
its minimum in $H=J$ (with value $|y-v|^2$).

From this we may deduce :
\[
\bar G = \{ (t+yi): |t-s|\le\rho,\quad 
|y-v| \le \sqrt{ \rho^2 -|t-s|^2} \le |y+v|
\}.
\]

Let us now fix $t\in\mathbb{R}$ and investigate for which $y>0$ we
have $t+iy\in\bar G$. We define $K=\sqrt{ \rho^2 -|t-s|^2}$
and 
obtain the following condition:
\begin{align*}
&|y-v| \le \sqrt{ \rho^2 -|t-s|^2}=K \le |y+v|\\
\iff & |y-v| \le  K \le |y+v|\\
\iff & v-K\le y \le v+K \text{ and }-v+K\le y\\
\iff & |v-K| \le y \le v+K.
\end{align*}

It follows that the interior $G$ of $\bar G$ is simply-connected and
therefore biholomorphic to the unit disc.

  Let $\tilde p=x+yi\in\C$ such that $x,y\in\R$, $y\ge 0$ and
  $p=x+yH$ for some $H\in\S$.
  Then $\tilde p$ is in the interior of $G$.

We fix such a biholomorphic map $\psi:G\to \Delta$ 
and recall
that it extends
continuously to the respective boundaries.
We may and do require $\psi(\tilde p)=0$.

By the classical mean value theorem 
\[
F(0)=\int_0^1\int_0^{2\pi} F(re^{i\theta}) \frac {d\theta}{2\pi} d\sigma
\]
for every holomorphic function $F:\mathbb{C}\to\mathbb{C}$, every $r>0$ and
every probability measure $\sigma$ on $[0,1]$.

Pulling-back with $\psi$ yields a probability measure $d\xi$ on $G$
such that
\begin{equation}\label{eq-6-1}
F(p)=\int_G F(w)d\xi(w)
\end{equation}
for every holomorphic function $F$. The measure $d\xi$ constructed
in this way is
absolutely continuous, if the measure $\sigma$ on $[0,1]$ used in the
construction is taken to be absolutely continuous.

For each point $t+yi\in G$ we have a $2$-sphere $t+y\S$ in $\H $.
Let $V$ denote the ``imaginary subspace'' of $\H$,
  i.e., the $\R-$vector
subspace of $\H$ generate by $y\S$.
The intersection of the $3$-sphere $S$ with
real affine subspace $t+V$ is a sphere
(of dimension $\le 2$). Thus $S\cap(t+y\S)$ is an intersection
of two spheres in a three-dimensional real affine space and
therefore again a sphere.

We let $\eta$ denote the involution defined by sending each 
element of $\Sigma_{t,y}=S\cap (t+y\S)$ to its antipodal element.

Since $\Sigma_{t,y}\subset (t+y\S)$, for every
$q\in\Sigma_{t,y}$ there are $J,H\in\S$ such that
$q=t+yJ$ and $\eta(q)=t+yH$.

By the generalized representation formula
(Proposition~\ref{gen-repr})
we have
\[
f(t+yI)= M_1(J,H) f(q) + M_2(J,H) f(\eta(q)) \quad \forall q\in\Sigma_{t,y}
\]
for every regular function $f$.

With $m_1(q):=M_1(J,H)$ and $m_2(q)=M_2(J,H)$ we
obtain continuous functions
$m_i:\Sigma_{t,y} \to\H ,$ for $i=1,2,$ such that 
\[
f(t+yI)= m_1(q) f(q) + m_2(q) f(\eta(q)) \quad \forall q\in\Sigma_{t,y}
\]
for every regular function $f$.

In particular
\[
f(t+yI)= \int_{q\in \Sigma_{t,y}}  m_1(q) f(q) + m_2(q) f(\eta(q)) d\alpha(q)
\]
for every probability measure $\alpha$ on $\Sigma_{t,y}$.
Hence we may choose an absolutely continuous probability measure
$\beta_{t,y}$ 
on $\Sigma_{t,y}$
such that
\begin{equation}\label{eq-6-2}
f(t+yI)= \int_{\Sigma_{t,y}}  f(q) d\beta(q)\quad \forall f
\end{equation}
We recall that regular functions restrict to 
holomorphic ones on $\mathbb{C}_I$.

We may therefore combine the above constructions 
(see equations~\eqref{eq-6-1},\eqref{eq-6-2}) to obtain
\[
f(p)=\int_{t+yi\in G} \int_{q\in \Sigma_{t,y}}
f(q)
d\beta(q) d\xi(t+yi)
\]
(with $\Sigma_{t,y}=S\cap(t+y\S)$).
\end{proof}

\subsection{Poisson's formula}

\begin{proposition}[Poisson's Formula]
  Let $\mu$ denote a probability measure on $\S$.
  Let $u \colon \overline{\mathbb{B}_R} \to \mathbb{R}$ be
  a rotationally invariant $C^2$- function.
  Assume  that $\Delta'u\equiv 0$.
  \footnote{$\Delta'u$ is defined on $\R$ for slice functions only.
    This is no problem, because every rotationally invariant function
    is a slice function.}
  Let $a \in \mathbb{R}$.
  Then the following formula holds:
$$
  u(a)=\frac{1}{2\pi} \int\limits_{\S }
  \int\limits_{0}^{2\pi}
  \frac{R^2-a^2}{|R \cdot e^{I\theta}-a|^2}u(R \cdot e^{I\theta})d\theta d\mu(I)
$$
\end{proposition}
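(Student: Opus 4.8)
The plan is to reduce the quaternionic statement to the classical Poisson formula on a single complex slice and then average over the sphere of imaginary units.

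First I would invoke Lemma~\ref{stem-real-fct}: since $u$ is real-valued and rotationally invariant, it is automatically a slice function and moreover rotationally equivariant. Rotational equivariance means $R_w u = u$ for every $w\in\S$, so that the averaging operator in the definition of $\Delta'$ collapses, $\int_\S R_w u\, d\mu(w)=u$ (any probability measure has total mass $1$), and consequently $\Delta' u = \Delta_* u$. The hypothesis $\Delta' u \equiv 0$ therefore upgrades to $\Delta_* u \equiv 0$. Because $u$ is a slice function, $\Delta_* u$ is in fact defined everywhere on $\overline{\mathbb{B}_R}$, including at the real points, by Proposition~\ref{delta-stem}.

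Next I would fix an arbitrary $I\in\S$ and restrict $u$ to the slice $\C_I$. Identifying $\C_I\simeq\C$ in the usual way, the operator $\Delta_*$ coincides with the ordinary complex Laplacian (Definition~\ref{def-laplace-star} and the remark following it); since $u$ is real-valued, $u|_{\C_I}$ takes values in $\R\subset\C_I$, and $\Delta_* u=0$ says precisely that $u|_{\C_I}$ is harmonic on the closed disc $\overline{\mathbb{B}_R}\cap\C_I$ of radius $R$. For the interior real point $a$ (with $|a|<R$) the classical Poisson formula on this disc then gives
\[
u(a)=\frac{1}{2\pi}\int_0^{2\pi}\frac{R^2-a^2}{|R e^{I\theta}-a|^2}\,u(R e^{I\theta})\,d\theta,
\]
where I have used $|a|^2=a^2$ because $a\in\R$; indeed the Poisson kernel for a disc of radius $R$ at the interior point $a$ is exactly $(R^2-a^2)/|R e^{I\theta}-a|^2$.

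Finally, this identity holds for every $I\in\S$ while its left-hand side is independent of $I$. Integrating both sides against the probability measure $\mu$ over $\S$ and using $\mu(\S)=1$ yields
\[
u(a)=\int_\S u(a)\,d\mu(I)=\frac{1}{2\pi}\int_\S\int_0^{2\pi}\frac{R^2-a^2}{|R e^{I\theta}-a|^2}\,u(R e^{I\theta})\,d\theta\,d\mu(I),
\]
which is the asserted formula. The one point requiring care is the passage from $\Delta_* u=0$ on $\overline{\mathbb{B}_R}\setminus\R$ to genuine harmonicity of $u|_{\C_I}$ on the \emph{entire} closed disc: since $u$ is $C^2$ and a slice function, $\Delta_* u$ extends continuously across the real axis, so the harmonicity holds on all of $\overline{\mathbb{B}_R}\cap\C_I$ and the Poisson formula is legitimately applied at the real point $a$. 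This step also implicitly requires $a$ to be interior, i.e.\ $|a|<R$.
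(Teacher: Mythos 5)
Your proof is correct, and its overall strategy coincides with the paper's: reduce to the classical Poisson formula on each slice $\C_I$ and then integrate over $\S$ against the probability measure $\mu$. The one genuine difference is the intermediate step that establishes harmonicity of $u|_{\C_I}$. The paper invokes Proposition~\ref{real-part} to write $u=\Re(f)$ for a slice-preserving regular function $f$, so that each slice restriction is the real part of a holomorphic map $\C_I\to\C_I$; this route leans on Theorem~\ref{harm-star}, i.e.\ on the decomposition of $\Delta_*$-harmonic slice functions into regular plus anti-regular parts. You instead argue directly: rotational equivariance (via Lemma~\ref{stem-real-fct}) collapses the averaging in $\Delta'$, so $\Delta'u=\Delta_*u=0$, and since $\Delta_*$ acts as the ordinary complex Laplacian on each slice (with $\Delta_*u$ defined across $\R$ because $u$ is a slice function, Proposition~\ref{delta-stem}), the restriction $u|_{\C_I}$ is harmonic on the whole disc. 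Your route is lighter --- it bypasses the representation $u=\Re(f)$ and the machinery behind it --- at the cost of not exhibiting the holomorphic ``completion'' of $u$, which the paper's argument provides as a by-product. You were also right to flag the two points the paper leaves implicit: that $|a|<R$ is needed for $a$ to be an interior point, and that harmonicity must be checked across the real axis before the classical Poisson formula is applied there.
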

\begin{proof}
  Due to Lemma~\ref{stem-real-fct} the function $u$ is a {\em slice function}.
  Thus we may conclude from Proposition~\ref{real-part} that $u$ is the
  real part of a slice-preserving regular function $f$. Therefore
  for each $I\in\S$, the restriction of $u$ to $\C_I$ is the real part
  of a holomorphic function from $\C_I$ to $\C_I$ and the above formula
  follows from the complex Poisson formula.
\end{proof}
\begin{remark}
As in Proposition \ref{GMVF}, it is possible to generalize the above Poisson Formula at any $a \in \mathbb{H}$ with an integration on the circularization of $\partial \Delta(a,r) \cup \partial\Delta(\overline{a},r)$ instead of an integration on $\partial \mathbb{B}_R.$ 
\end{remark}

\section{A Jensen's Formula}
The goal of this section is to prove a quaternionic version of Jensen's formula.
For this purpose we need Blaschke factors.

\subsection{Quaternionic $\rho$-Blaschke factors}

In this subsection we are going to reproduce some results proved in \cite{irenebla,shur} for a modification of quaternionic Blaschke factors.

\begin{definition}
Given $\rho>0$ and $a\in\H $ such
that $|a|<\rho$. We define the $\rho$\textit{-Blaschke factor} at $a$ 
as the following semi-regular function on $\mathbb{H}$:
\begin{equation*}
B_{a,\rho}:\H \rightarrow\hat{\H },\quad
B_{a,\rho}(x):=(\rho^{2}-x\bar a)*(\rho(x-a))^{-*}.
\end{equation*}
\end{definition}

We observe that
\begin{align*}
B_{a,\rho}
&= (\rho^2-q\bar a)*(q-\bar a)
\left(\rho(q^2-q(a+\bar a)+|a|^2)\right)^{-*}\\
&= (q^2(-\bar a) +q(\rho^2+\bar a ^2) -\rho^2\bar a)\left(\rho(q^2-q(a+\bar a)+|a|^2)\right)^{-1}
\end{align*}
(using that $g(q)^{-*}=(g(q))^{-1}$ for any slice-preserving
function $g$, hence in particular for $g(q)=\rho(q^2-q(a+\bar a)+|a|^2)$).

In particular,
\[
|B_{a,\rho}(0)|=\left| \frac{\rho}{a}\right|, \,\, \textrm{if} \,\, a \neq 0 
\]
and
\[
|B_{a,\rho}^{-*}(0)|=\left| \frac{a}{\rho}\right|.
\]

\begin{remark}\label{zerobla}
We observe that 
$(B_{a,\rho})^{-*}$ has a zero of multiplicity one at $a$
and no other zeros or poles in $\B_\rho$.
\end{remark}

The following is a consequence of Theorem 5.5 of \cite{irenebla}.

\begin{theorem}\label{borderbla}
Given $\rho>0$  and $a\in\H $. The
$\rho$-Blaschke factors $B_{a,\rho}$ have the following properties:
\begin{itemize}
\item they satisfy $B_{a,\rho}(\H \setminus\mathbb{B}_{\rho})\subset \B_1$ and $B_{a,\rho}(\mathbb{B}_{\rho})\subset \H \setminus\B_1$.
\item they send the boundary of the ball  
$\partial\mathbb{B}_{\rho}$ in the boundary of the ball $\partial \B_1$.
\end{itemize}
\end{theorem}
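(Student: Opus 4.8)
The plan is to reduce the three assertions to classical facts about complex Blaschke factors, one slice at a time, and then to transfer the slicewise information to all of $\H$ by means of the symmetrization $B_{a,\rho}^{s}$, which is slice-preserving and has a closed form.

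\emph{The distinguished slice.} If $a\in\R$, then $B_{a,\rho}$ is slice-preserving and restricts to one and the same complex rational map on every slice, so the assertion reduces to the classical one. If $a\notin\R$, I fix the unique $I\in\S$ (up to sign) with $a\in\C_{I}$. Since $\C_{I}$ is a field containing $a$ and $\bar a$, both $\rho^{2}-x\bar a$ and $\rho(x-a)$ map $\C_{I}$ into $\C_{I}$, and on this slice the slice product and slice reciprocal coincide with the ordinary complex operations; hence $B_{a,\rho}$ restricts to the genuine complex map $z\mapsto(\rho^{2}-z\bar a)/\bigl(\rho(z-a)\bigr)$. The elementary identity $|\rho^{2}-z\bar a|=|z|\,|z-a|$, valid whenever $|z|=\rho$, gives $|B_{a,\rho}(z)|=1$ on $\partial\mathbb{B}_{\rho}\cap\C_{I}$, while the location of the unique zero at $\rho^{2}a/|a|^{2}$ (outside $\mathbb{B}_{\rho}$) and of the unique pole at $a$ (inside) yields the region statements on this slice through the classical maximum and minimum modulus principles.

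\emph{The symmetrization.} To reach the remaining slices I would compute $B_{a,\rho}^{s}=(B_{a,\rho})^{c}*B_{a,\rho}$. Using $(f*g)^{s}=f^{s}g^{s}$ together with $(g^{-*})^{s}=(g^{s})^{-1}$ one obtains the slice-preserving function
\[
B_{a,\rho}^{s}(x)=\frac{|a|^{2}x^{2}-2\rho^{2}\Re(a)\,x+\rho^{4}}
{\rho^{2}\bigl(x^{2}-2\Re(a)\,x+|a|^{2}\bigr)}.
\]
Writing $\tilde a=\Re(a)+i\,|\Im(a)|\in\C$, the denominator equals $\rho^{2}(x-\tilde a)(x-\bar{\tilde a})$ and the numerator factors as $|a|^{2}(x-\rho^{2}/\tilde a)(x-\rho^{2}/\bar{\tilde a})$, exhibiting $B_{a,\rho}^{s}$ as a product of complex factors reflected across $|x|=\rho$. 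Because $B_{a,\rho}^{s}$ is slice-preserving with real coefficients, it restricts to one and the same complex rational function on every slice, and a direct computation with $\bar z=\rho^{2}/z$ on $|z|=\rho$ shows $|B_{a,\rho}^{s}|\equiv1$ on $\partial\mathbb{B}_{\rho}$, $|B_{a,\rho}^{s}|>1$ on $\mathbb{B}_{\rho}$ and $|B_{a,\rho}^{s}|<1$ on $\H\setminus\overline{\mathbb{B}_{\rho}}$, the region inequalities following from the complex maximum modulus principle. Standard relations between a slice function and its symmetrization then identify $|B_{a,\rho}^{s}(x)|$ with the product of the maximum and the minimum of $|B_{a,\rho}|$ over the $2$-sphere $\S_{x}$.

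\emph{Upgrading, and the main obstacle.} It remains to promote this into the pointwise statements of the theorem on slices $\C_{I'}$ with $a\notin\C_{I'}$. On such a slice the splitting Lemma~\ref{splitting}, applied to the semi-regular $B_{a,\rho}$, writes $|B_{a,\rho}|^{2}=|g_{I'}|^{2}+|h_{I'}|^{2}$ with $g_{I'},h_{I'}$ meromorphic; away from the pole the right-hand side is subharmonic, and Remark~\ref{zerobla} places the only zero outside and the only pole inside $\mathbb{B}_{\rho}$, so combining subharmonicity with the boundary values already known would deliver the region inequalities. The genuine difficulty is exactly here: off the distinguished slice $B_{a,\rho}$ is no longer a complex Blaschke factor and $|B_{a,\rho}(x)|$ is truly slice-dependent, so the symmetrization controls only the \emph{product} over each sphere $\S_{x}$ and not the value at an individual point. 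This final pointwise control is precisely what Theorem~5.5 of \cite{irenebla} supplies, and the three stated properties follow from it.
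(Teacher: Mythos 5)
The paper contains no proof of this theorem: it is introduced with the single sentence ``The following is a consequence of Theorem 5.5 of \cite{irenebla}'' and nothing more. So your proposal can only be judged as a self-contained argument, and as such it has a genuine gap --- one you locate correctly yourself. The distinguished-slice computation (via $\rho^2-z\bar a=z\,\overline{(z-a)}$ for $|z|=\rho$) and the closed form of $B_{a,\rho}^{s}$ are correct, but the symmetrization only controls the product $\max_{\S_x}|B_{a,\rho}|\cdot\min_{\S_x}|B_{a,\rho}|$ over each sphere, which cannot yield any of the three pointwise assertions. The proposed upgrade is also not viable as sketched: the subharmonicity of $\log\bigl(|g_{I'}|^2+|h_{I'}|^2\bigr)$ on a slice $\C_{I'}\not\ni a$ gives a maximum principle only after one already knows $|B_{a,\rho}|=1$ on $\partial\B_\rho\cap\C_{I'}$, which is precisely the unproved statement on that slice; moreover the interior inclusion $B_{a,\rho}(\B_\rho)\subset\H\setminus\B_1$ cannot come from subharmonicity of $\log|B_{a,\rho}|$ at all, since subharmonic functions satisfy no minimum principle. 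Falling back on Theorem 5.5 of \cite{irenebla} for ``the final pointwise control'' makes your argument exactly as complete as the paper's --- namely, a citation --- but the elementary portion on its own does not prove the theorem.

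For what it is worth, the missing boundary statement can be obtained by a short direct computation instead of a citation. Write $B_{a,\rho}=f*g$ with $f(q)=\rho^2-q\bar a$ and $g=(\rho(q-a))^{-*}=\bigl[\rho^2(q^2-2\Re(a)q+|a|^2)\bigr]^{-1}\rho(q-\bar a)$, and use $(f*g)(q)=f(q)\,g\bigl(f(q)^{-1}qf(q)\bigr)$ together with $\rho^2-q\bar a=q\,\overline{(q-a)}$ for $|q|=\rho$. Everything cancels down to
\[
|B_{a,\rho}(q)|=\frac{\bigl|\rho^2+\bar a^2-2\Re(q)\,\bar a\bigr|}{\bigl|q^2-2\Re(a)\,q+|a|^2\bigr|},
\]
and writing $a=\alpha+v$, $q=x+w$ with $\alpha,x\in\R$, $v,w$ imaginary and $x^2+|w|^2=\rho^2$, the equality of the two moduli reduces to the identity $(A+C)(A-C)=4(x-\alpha)^2\bigl[(\rho^2-x^2)-|v|^2\bigr]$ for $A=\rho^2+\alpha^2-|v|^2-2x\alpha$ and $C=2x^2-\rho^2-2\alpha x+\alpha^2+|v|^2$, which is immediate since $A+C=2(x-\alpha)^2$ and $A-C=2(\rho^2-x^2)-2|v|^2$. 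Once $|B_{a,\rho}|\equiv 1$ on all of $\partial\B_\rho$ is known, your splitting-lemma argument (Lemma~\ref{splitting}) does give $|B_{a,\rho}|\le 1$ on the exterior of each slice disc, and the interior inclusion follows by applying the same reasoning to $B_{a,\rho}^{-*}=\rho(q-a)*(\rho^2-q\bar a)^{-*}$, whose only zero and poles in $\overline{\B_\rho}$ are located by Remark~\ref{zerobla}.
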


\subsection{Jensen's formula}
First we prove a variant of Jensen's formula for the special
case where there are neither zeros nor poles.

\begin{proposition}\label{jensen-ni-ni}
Let $\rho>0$ and let $f$ be a regular function defined in a
neigbourhood of $\overline{\mathbb{B}_{\rho}}$. Assume that $f$ has no
zeros in $\overline{\mathbb{B}_{\rho}}$.

Let $\mu$ be a probability measure on $\S$.

Then
\[
\log|f(0)| \le 
\displaystyle\frac{1}{2\pi}\displaystyle \int_0^{2\pi} \int_{\S }\log|f(\rho\cos\theta +\rho\sin\theta I)|d\mu(I)d\theta 
\]
with equality if $f$ is slice-preserving.
\end{proposition}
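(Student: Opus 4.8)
The plan is to reduce the statement to the classical potential theory of subharmonic functions on each complex slice $\C_I$, and then to average over $\S$ against $\mu$. First I would fix $I\in\S$ and choose $J\in\S$ with $J\perp I$. By the Splitting Lemma~\ref{splitting}, the restriction $f_I$ of $f$ to $\C_I$ decomposes as $f_I(z)=g_I(z)+h_I(z)J$ with $g_I,h_I:\Omega\cap\C_I\to\C_I$ holomorphic. Since $h_I(z)J$ lies in the orthogonal complement $\langle J, IJ\rangle_\R$ of $\C_I$, this is an orthogonal decomposition and hence
\[
|f_I(z)|^2=|g_I(z)|^2+|h_I(z)|^2 .
\]
Because $f$ has no zeros on $\overline{\mathbb{B}_{\rho}}$, in particular $f_I$ has none on the closed disc of radius $\rho$ in $\C_I$, so this quantity is strictly positive there.

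The key analytic input is that $\log|f_I|=\tfrac12\log\bigl(|g_I|^2+|h_I|^2\bigr)$ is \emph{subharmonic}. I would verify this by the direct computation that $\Delta\log\phi\ge 0$ for $\phi=|g_I|^2+|h_I|^2$. Writing $\partial_z\partial_{\bar z}\log\phi=(\phi_{z\bar z}\,\phi-|\phi_z|^2)/\phi^2$ and using that $g_I,h_I$ are holomorphic, one finds $\phi_z=g_I'\bar g_I+h_I'\bar h_I$ and $\phi_{z\bar z}=|g_I'|^2+|h_I'|^2$, so the nonnegativity of the numerator is exactly the Cauchy--Schwarz inequality
\[
|g_I'\bar g_I+h_I'\bar h_I|^2\le\bigl(|g_I'|^2+|h_I'|^2\bigr)\bigl(|g_I|^2+|h_I|^2\bigr)
\]
for the vectors $(g_I',h_I')$ and $(g_I,h_I)$ in $\C^2$. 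This is the step I expect to be the main obstacle, since it is where the quaternionic structure genuinely departs from the holomorphic case: $|f_I|$ is not the modulus of a single holomorphic function but rather $(|g_I|^2+|h_I|^2)^{1/2}$.

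Once subharmonicity is established, the sub-mean value property on the disc of radius $\rho$ gives, for each fixed $I\in\S$,
\[
\log|f(0)|\le\frac{1}{2\pi}\int_0^{2\pi}\log|f(\rho\cos\theta+\rho\sin\theta I)|\,d\theta .
\]
Since the left-hand side does not depend on $I$ and $\mu$ is a probability measure on $\S$, integrating both sides against $d\mu(I)$ and applying Fubini yields precisely the claimed inequality.

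Finally, for the equality case I would observe that if $f$ is slice-preserving, then $f(\C_I)\subset\C_I$ forces $h_I\equiv 0$, so $f_I=g_I$ is a genuine nonvanishing holomorphic function on the disc in $\C_I\simeq\C$. Then $\log|f_I|$ is harmonic rather than merely subharmonic, the sub-mean value inequality becomes the classical mean value \emph{equality}, and integrating over $\S$ against $\mu$ preserves this equality, giving the stated identity.
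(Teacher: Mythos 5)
Your proposal is correct and follows essentially the same route as the paper: split $f_I=g_I+h_I J$ via the Splitting Lemma, observe $|f_I|^2=|g_I|^2+|h_I|^2$, use subharmonicity of $\log\bigl(|g_I|^2+|h_I|^2\bigr)$ to get the sub-mean value inequality on each slice, and integrate over $\S$ against $\mu$. You additionally spell out the Cauchy--Schwarz verification of subharmonicity and the equality case for slice-preserving $f$, both of which the paper leaves implicit.
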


\begin{proof}
Fix an imaginary unit $I$. 
Choose another imaginary unit $J$ such that
$IJ=-JI$
(i.e., $I$ and $J$ are supposed to be orthogonal). 
Thus, using the ``Splitting Lemma'' \ref{splitting},
there are two holomorphic functions $g,h$ with values in $\C_I$
defined on a neighborhood
of $\bar\Delta_\rho=\{z\in\C_I:|z|\le\rho\}$ such that
\[
f(q)=g(q)+h(q)J,\ \ \forall q\in\Delta_\rho=\B_\rho\cap\C_I.
\]
Then $|f(q)|^2=|g(q)|^2+|h(q)|^2$.
Now, $\log\left( |g|^2+|h|^2\right)$ is subharmonic
for any two holomorphic functions $g,h:\C\to\C$. Thus we have
subharmonicity of $\log|f|^2$ and consequently
\[
\log|f(0)|\le \frac{1}{2\pi}\int_0^{2\pi}
\log|f(\rho e^{It})|dt
\]

Finally, by integration over the sphere of imaginary units we obtain
the assertion.
\end{proof}

In order to deal with the general case (where the function $f$ is
allowed to have zeros or poles) we need some preparations.

\begin{lemma}
Let $f,g$ be regular functions on an open neighborhood of
$\partial \B_\rho=\{q\in\H:|q|=\rho\}$.

Assume that $|g(q)|=1$ for all $q\in\partial \B_\rho$.

Then
$|f(q)|=|(f*g)(q)|$ and $|g^{-*}(q)|=1$ for all
$q\in\partial \B_\rho$.
\end{lemma}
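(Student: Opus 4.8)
The plan is to deduce both equalities from the pointwise formula for the slice product, combined with the elementary fact that quaternionic conjugation preserves the modulus. The pointwise product formula (see \cite{genstostru}) asserts that whenever $f(q)\ne 0$,
\[
(f*g)(q)=f(q)\,g\bigl(f(q)^{-1}qf(q)\bigr).
\]
Setting $\tilde q = f(q)^{-1}qf(q)$, I would first note that conjugation by a non-zero quaternion preserves both the real part and the modulus; hence $\Re(\tilde q)=\Re(q)$ and $|\tilde q|=|q|$, so that $\tilde q\in\S_q$ and, in particular, $\tilde q\in\partial\B_\rho$ as soon as $q\in\partial\B_\rho$.

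To establish $|f(q)|=|(f*g)(q)|$, I would split into two cases. If $f(q)\ne 0$, then taking moduli in the product formula and using $|\tilde q|=\rho$ together with the hypothesis $|g|\equiv 1$ on $\partial\B_\rho$ gives
\[
|(f*g)(q)|=|f(q)|\,\bigl|g(\tilde q)\bigr|=|f(q)|.
\]
If instead $f(q)=0$, then the inclusion $\mathcal{Z}(f)\subset\mathcal{Z}(f*g)$ from \eqref{zerosofprod} forces $(f*g)(q)=0$, so that $|(f*g)(q)|=0=|f(q)|$. This disposes of the first assertion.

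For the second assertion, I would first argue that $g^{-*}$ is regular on a neighborhood of $\partial\B_\rho$. Since $|g|\equiv 1$ there, $g$ has no zeros on $\partial\B_\rho$; moreover every sphere $\S_x$ with $x\in\partial\B_\rho$ is entirely contained in $\partial\B_\rho$ (all its points share the modulus $|x|=\rho$), so $g$ has no zeros on any such $\S_x$ and therefore $g^s$ does not vanish on $\partial\B_\rho$. By continuity $g^s$ is non-vanishing on a neighborhood of $\partial\B_\rho$, where $g^{-*}$ is regular and satisfies $g^{-*}*g=\mathcal{I}((G^cG)^{-1}G^cG)=1$. Applying the first assertion to the pair $(g^{-*},g)$ then yields
\[
|g^{-*}(q)|=\bigl|(g^{-*}*g)(q)\bigr|=|1|=1,\qquad q\in\partial\B_\rho.
\]
The only genuinely delicate point is this last regularity argument: one must ensure that $g^{-*}$ extends as a regular function across all of $\partial\B_\rho$, and this is exactly where the observation $\S_x\subset\partial\B_\rho$ is needed, so that the polar set $\mathcal{Z}(g^s)$ of $g^{-*}$ stays away from the boundary sphere.
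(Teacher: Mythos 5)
Your proof is correct and follows essentially the same route as the paper: both rest on the pointwise formula $(f*g)(q)=f(q)\,g\bigl(f(q)^{-1}qf(q)\bigr)$, the invariance of the modulus under conjugation by a nonzero quaternion, and the trick of applying the first assertion to the pair $(g^{-*},g)$ together with $g^{-*}*g=1$. The only difference is that you spell out two points the paper leaves implicit --- the case $f(q)=0$ via $\mathcal{Z}(f)\subset\mathcal{Z}(f*g)$, and the regularity of $g^{-*}$ near $\partial\B_\rho$ via the observation $\S_x\subset\partial\B_\rho$ --- and both are handled correctly.
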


\begin{proof}
The formula
\[
|p^{-1}qp|= |q|,\ \ \forall p\in\H^*,q\in\H
\]
implies that
$ f(q)^{-1}qf(q)\in\partial\B_\rho$ whenever $q\in\partial\B_\rho$.
Combined with
\[
(f*g)(q)=f(q)g\left( f(q)^{-1}qf(q) \right)\quad
\text{for $q$ with $f(q)\ne 0$}
\]
and $|g(q)|=1\ \forall q\in\partial\B_\rho$ we obtain
\[
|(f*g)(q)|=|f(q)|\ \ \forall q\in\partial\B_\rho.
\]
If we apply this equation to $f=g^{-*}$, we obtain
\[
1=|(g^{-*}*g)(q)|=|(g^{-*})(q)|\ \ \forall q\in\partial\B_\rho.
\]
\end{proof}

\begin{proposition}\label{factorization}
Let $f$ be a semi-regular function on a neighborhood of
$\bar \B_\rho$, with neither zeros nor poles on $\partial \B_{\rho}$.

Then there exist ``$\rho$-Blaschke factors'' $B_1,\ldots,B_r$
and a regular function without zeros $f_0$
such
that:
\[
f=f_0*B_1*\ldots *B_r.
\]
\end{proposition}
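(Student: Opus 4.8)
The plan is to remove the zeros and poles of $f$ inside $\B_\rho$ one at a time, by peeling off a $\rho$-Blaschke factor (for a pole) or the slice reciprocal of one (for a zero), and to collect the remaining regular pieces into $f_0$. First I would record that the relevant data is finite: by the properties of $div$ collected in Section~\ref{sect-div} the support of $div(f)$ is discrete, so since $\bar\B_\rho$ is compact and $f$ has neither zeros nor poles on $\partial\B_\rho$, only finitely many spheres $\S_{p_k}\subset\B_\rho$ carry a zero or a pole of $f$. I would then induct on the total multiplicity $N=\sum_k|m_k|$ of $div(f)$ on $\B_\rho$, the base case $N=0$ being a semi-regular function with no poles (hence regular) and no zeros, which we take to be $f_0$.

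For the inductive step the key are two explicit consequences of the definition of $B_{a,\rho}$. If $f$ has a zero at an exact point $a\in\B_\rho$, Lemma~\ref{right-factor} lets me write $f=g*(q-a)$, and the computation
\[
(q-a)=\tfrac1\rho\,(B_{a,\rho})^{-*}*(\rho^2-q\bar a)
\]
exhibits the linear factor as the reciprocal Blaschke factor $(B_{a,\rho})^{-*}$ (which by Remark~\ref{zerobla} has a single zero at $a$ and nothing else in $\B_\rho$) times the regular function $\rho^2-q\bar a$, whose only zero $\rho^2a/|a|^2$ lies outside $\bar\B_\rho$ since $|a|<\rho$. A pole of $f$ at $c$ is treated through a local representation $f=g^{-*}*h$ together with the companion identity
\[
(q-c)^{-*}=\rho\,(\rho^2-q\bar c)^{-*}*B_{c,\rho},
\]
which produces the genuine Blaschke factor $B_{c,\rho}$. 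Because these identities display the cancellation \emph{as an actual factorization} and not merely at the level of divisors, they avoid the trap pointed out in the Warning of Section~\ref{sect-div}, where $div\ge 0$ fails to guarantee regularity. A spherical zero or pole on $\S_{x+yI}$ contributes $2\{\tilde p_k\}$ and is removed by a conjugate pair of such factors.

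The step I expect to be the main obstacle is the bookkeeping forced by the non-commutativity of $*$. Each conversion of a linear factor into a (reciprocal) Blaschke factor leaves behind an auxiliary zero-free regular factor such as $\rho^2-q\bar a$, and in the statement all of these, together with the leftover function $g$, must be gathered into the single left-hand factor $f_0$ of $f=f_0*B_1*\cdots*B_r$. This collection is legitimate because the zero-free regular functions on $\bar\B_\rho$ are closed under the slice product (their divisors add to $0$) and are $*$-invertible there, so they form a group; transporting these factors to the left past the Blaschke factors is what must be organized carefully, the delicate point being to ensure that each such transport again yields a pole-free (hence genuinely regular) function, and to fix consistently the side from which each factor is peeled so that a zero or pole already cancelled is never silently reintroduced. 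Carrying out this induction exhausts all of $div(f)$, after which the residual factor is semi-regular on a neighbourhood of $\bar\B_\rho$ with no poles, hence regular, and with vanishing divisor, hence without zeros; this residual factor is the desired $f_0$.
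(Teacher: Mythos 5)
Your local identities are correct (and are exactly the ones the paper uses): $(q-\tilde a)=\frac1\rho\,(B_{\tilde a,\rho})^{-*}*(\rho^2-q\overline{\tilde a})=\frac1\rho\,(\rho^2-q\overline{\tilde a})*(B_{\tilde a,\rho})^{-*}$, the two orderings agreeing because $(\rho^2-q\bar a)$ and $(q-a)$ $*$-commute. With the second ordering the zero-removal half of your plan closes with no bookkeeping at all: from $f=g*(q-\tilde a)$ (Lemma~\ref{right-factor}) one gets $f=\bigl(g*\frac1\rho(\rho^2-q\overline{\tilde a})\bigr)*B_{\tilde a,\rho}^{-*}$, the auxiliary zero-free factor is absorbed on the left, and the reciprocal Blaschke factor is peeled off on the far right. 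The genuine gap is the pole step. Writing $f=g^{-*}*h$ and extracting $(q-\tilde c)^{-*}=\rho\,(\rho^2-q\overline{\tilde c})^{-*}*B_{\tilde c,\rho}$ from $g^{-*}$ necessarily leaves $B_{\tilde c,\rho}$ in the \emph{interior} of the $*$-product, with a block $g_0^{-*}*h$ carrying all the remaining zeros and poles sitting to its right; there is no ordering of the identity that avoids this. To reach $f_0*B_1*\cdots*B_r$ you must move that whole block across the Blaschke factor. You name this as the delicate point but give no mechanism, and the obvious one --- replacing $B*D$ by $D'*B$ with $D'=B*D*B^{-*}$ --- controls only the divisor of $D'$, not its regularity or pole-freeness; as the Warning in Section~\ref{sect-div}, which you yourself invoke, shows, vanishing divisor does not imply regularity, so this is precisely where the argument can break down. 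As it stands the proposal proves the statement only for $f$ regular (no poles).

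The paper closes this gap without any transport by a double application of $X\mapsto(X^{-*})^c$. First all zeros of the numerator $h$ are right-peeled as above, giving $f=g^{-*}*\tilde h*B_1*\cdots*B_s$ with $\tilde h$ zero-free. Then one observes $g^{-*}*\tilde h=(f_1^{-*})^c$ for the \emph{regular} function $f_1=g^c*(\tilde h^{-*})^c$, whose zeros correspond to the poles of $f$; the same right-peeling applied to $f_1$ yields $f_1=\phi*B_1'*\cdots*B_t'$ with $\phi$ regular and zero-free, and applying $(\cdot^{-*})^c$ back reverses the product into the required left-to-right order, sending each $(B_i')^{-*}$ to the $\rho$-Blaschke factor $((B_i')^{-*})^c$ and $\phi$ to the zero-free regular function $f_0=(\phi^{-*})^c$. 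This conjugate-reciprocal device is the idea your proposal is missing; with it, the induction on total multiplicity and the entire transport problem disappear.
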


Here a function $B$ is called $\rho$-Blaschke factor, if $B=B_{a,\rho}$
or $B=B_{a,\rho}^{-*}$ for an element $a\in \B_\rho$.

\begin{proof}
Let $g,h$ be regular functions such that $f=g^{-*}*h$.
First we claim that there exist $\rho$-Blaschke factors $B_1,\ldots, B_s$
and a regular function $\tilde h$ without zeros
such that $f=g^{-*}*\tilde h*B_1*\ldots* B_s$ .
We proceed recursively. If $h$ admits a zero
in a point $a\in\B_\rho$, then there exists
an element $b\in\S_a$ such that $h=h_0*(q-b)$. Recall
that
\[
B_{b,\rho}=(\rho^2-q\bar b)*(\rho(q-b))^{-*}
= (\rho(q-b))^{-*} *(\rho^2-q\bar b)
\]
and therefore
\[
(q-b)=\frac 1{\rho}\left( \rho^2-q\bar b\right)
* B_{b,\rho}^{-*}.
\]
Thus
$f=g^{-*}*h_1*B_{b,\rho}^{-*}$
with
\[ 
h_1(q)=h_0(q)\frac 1{\rho} * \left( \rho^2-q\bar b\right)
\]
being regular.
Repeating this procedure recursively, we obtain a regular function
$\tilde h$ without zeros in $\bar\B_\rho$
 and $\rho$-Blaschke factors $B_1,\ldots, B_s$
such that
\[
f=g^{-*}*\tilde h*B_1*\ldots* B_s
\]
Define
\[
f_1=g^c*(\tilde h^{-*})^c
\]
Then 
$f_1$ is regular and 
\[
f=(f_1^{-*})^c*B_1*\ldots* B_s
\]
Repeating the above process, we obtain a regular function $\phi$
without zeros and $\rho$-Blaschke factors $B'_1,\ldots B'_t$
such that
\[
f_1=\phi* (B'_1*\ldots * B'_t)
\]
Consequently
\begin{align*}
f &=\left(\left( \phi* (B'_1*\ldots * B'_t) \right)^{-*}\right)^c
*B_1*\ldots* B_s \\
&
=(\phi^{-*})^c * \left( ((B'_1)^{-*})^c\right) * \ldots  (((B'_t)^{-*})^c)
* \left( B_1*\ldots* B_s \right)
\end{align*}
\end{proof}

\begin{proposition}[Jensen's Formula]\label{jensen}
Let $\Omega=\Omega_D$ be a circular
domain of $\H $ and 
let $f:\Omega\rightarrow\H \cup\{ \infty \}$ be a
semi-regular function.
Let $\rho>0$ such that the ball $\overline{\mathbb{B}_{\rho}}\subset\Omega$, $f(0)\neq 0,\infty$ and such that $f(y)\neq 0, \infty$, for any $y\in\partial \mathbb{B}_{\rho}$. \\
Assume  that (for the function $f$)
\begin{itemize}
\item $\{r_{k}\}_{k=1,2,..}$ are the punctual zeros, \\
\item $\{ w_n \}_{n=1,2,...}$ are the punctual real poles, \\
\item $\{\S _{a_{i}}\}_{i=1,2,..}$ are the spherical zeros, \\
\item $\{\S _{b_{j}}\}_{j=1,2,..}$ are the spherical poles, \\

\end{itemize}
everything repeated accordingly to their multiplicity.
Let $\mu$ be a probability measure on $\S$.

Then: \\
\begin{align*}
\log|f(0)| 
\le &
\displaystyle\frac{1}{2\pi }\displaystyle \int_0^{2\pi} \int_{\S }\log|f(\rho\cos\theta +\rho\sin\theta I)|d\mu(I)d\theta \,  \\
& -\displaystyle\sum_{|r_{k}|<\rho}\left(\log\displaystyle\frac{\rho}{|r_{k}|}\right) 
+ \displaystyle\sum_{|w_{n}|<\rho}\left(\log\displaystyle\frac{\rho}{|w_{n}|}\right)\\ 
&- 2\displaystyle\sum_{|a_{j}|<\rho}\left(\log\displaystyle\frac{\rho}{|a_{j}|}\right)+ 2\displaystyle\sum_{|b_{j}|<\rho}\left(\log\displaystyle\frac{\rho}{|b_{j}|} \right).
\end{align*}
\end{proposition}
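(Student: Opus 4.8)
The plan is to reduce the general statement to the zero- and pole-free case of Proposition~\ref{jensen-ni-ni} by peeling off Blaschke factors one at a time. First I would apply the factorization of Proposition~\ref{factorization} to write $f = f_0 * B_1 * \cdots * B_r$, where $f_0$ is a regular function with no zeros on $\overline{\mathbb{B}_\rho}$ and each $B_i$ is a $\rho$-Blaschke factor, that is $B_i = B_{a_i,\rho}$ or $B_i = B_{a_i,\rho}^{-*}$ for some $a_i\in\mathbb{B}_\rho$. This is possible precisely because $f$ has neither zeros nor poles on $\partial\mathbb{B}_\rho$, and the factorization is finite since the zeros and poles of $f$ are discrete in the compact set $\overline{\mathbb{B}_\rho}$. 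The hypothesis $f(0)\neq 0,\infty$ guarantees moreover that no $a_i$ equals $0$.

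Next I would compare the boundary integrals. By Theorem~\ref{borderbla} every $\rho$-Blaschke factor satisfies $|B_i(q)|=1$ for $q\in\partial\mathbb{B}_\rho$; since $f$ has no zeros or poles there, the norm-preservation property (the unnumbered lemma preceding Proposition~\ref{factorization}, applied iteratively to strip off $B_r, B_{r-1},\ldots$) yields $|f(q)| = |f_0(q)|$ for every $q\in\partial\mathbb{B}_\rho$. Hence the two boundary averages of $\log|\cdot|$ coincide, and applying Proposition~\ref{jensen-ni-ni} to the zero-free regular function $f_0$ gives $\log|f_0(0)| \le \frac{1}{2\pi}\int_0^{2\pi}\int_\S \log|f(\rho\cos\theta+\rho\sin\theta I)|\,d\mu(I)\,d\theta$.

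The correction terms come from the value at the origin. Since $0\in\R$ and the component $F_2$ of any stem function vanishes on $\R$, the slice product degenerates to the ordinary pointwise product at $0$, so $f(0) = f_0(0)\,B_1(0)\cdots B_r(0)$ and therefore $\log|f(0)| = \log|f_0(0)| + \sum_{i} \log|B_i(0)|$. Inserting the recorded origin values $|B_{a,\rho}(0)| = \rho/|a|$ and $|B_{a,\rho}^{-*}(0)| = |a|/\rho$ (valid since each $a_i\neq 0$), each factor of type $B_{a,\rho}^{-*}$ contributes $-\log(\rho/|a|)$ and each factor of type $B_{a,\rho}$ contributes $+\log(\rho/|a|)$.

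The remaining step, and the genuine bookkeeping obstacle, is to identify which Blaschke factors arise from which part of the divisor of $f$. By Remark~\ref{zerobla} a factor $B_{a,\rho}^{-*}$ carries exactly one simple zero (at $a$) and no poles in $\mathbb{B}_\rho$, while $B_{a,\rho}$ carries exactly one simple pole; thus zeros of $f$ produce $B^{-*}$-factors and poles produce $B$-factors. A punctual zero $r_k$ comes from a single factor $B_{b,\rho}^{-*}$ with $b\in\S_{r_k}$, whence $|b|=|r_k|$ and the contribution is $-\log(\rho/|r_k|)$; a punctual (necessarily real) pole $w_n$ comes from a single $B_{w_n,\rho}$, contributing $+\log(\rho/|w_n|)$. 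A spherical zero $\S_{a_j}$, on the other hand, corresponds to the slice-preserving quadratic $(q-a_j)*(q-a_j^c)$ and hence to a pair of simple-zero factors located at two conjugate points of $\S_{a_j}$, both of modulus $|a_j|$; this is exactly where the factor of two appears, in agreement with the convention of Section~\ref{sect-div} that a spherical zero counts twice in the divisor, and it yields $-2\log(\rho/|a_j|)$. Symmetrically a spherical pole $\S_{b_j}$ yields $+2\log(\rho/|b_j|)$. Summing these contributions of $\sum_i\log|B_i(0)|$ and combining with the bound for $\log|f_0(0)|$ from the previous paragraph produces precisely the asserted inequality.
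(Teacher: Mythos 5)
Your proposal is correct and follows essentially the same route as the paper: factor $f$ as $f_0*B_1*\cdots*B_r$ via Proposition~\ref{factorization}, use $|B_i|=1$ on $\partial\mathbb{B}_\rho$ to replace $f$ by $f_0$ in the boundary integral, apply Proposition~\ref{jensen-ni-ni} to $f_0$, and collect the values $|B_i(0)|$. The only difference is that you spell out the bookkeeping (the degeneration of the slice product to the pointwise product at the real point $0$, and the matching of Blaschke factors to punctual versus spherical zeros and poles) which the paper compresses into ``Combining these facts, we obtain the assertion'' after passing to the divisor formulation.
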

Using the language of divisors as explained in section
\ref{sect-div}
we may reformulate this as follows:

\begin{proposition}[Jensen's Formula]
Let $\Omega=\Omega_D$ be a circular
domain of $\H $ and 
let $f:\Omega\rightarrow\H \cup\{ \infty \}$ be a
semi-regular function.
Let $\rho>0$ such that the ball 
$\overline{\mathbb{B}_{\rho}}\subset\Omega$, $f(0)\neq 0,\infty$ 
and such that $f(y)\neq 0, \infty$, for any $y\in\partial \mathbb{B}_{\rho}$.
Let $\mu$ be a probability measure on $\S$.

Then: \\
\begin{equation}\label{jensenNreg}
\begin{array}{ccc}
\log|f(0)| 
&\le 
\displaystyle\frac{1}{2\pi }\displaystyle \int_0^{2\pi} \int_{\S }\log|f(\rho\cos\theta +\rho\sin\theta I)|d\mu(I)d\theta \, + \\
&\\
& \displaystyle-\sum_{|p_k|<\rho} m_k\log \frac{\rho}{|p_k|}
\quad 
 \\
\end{array}
\end{equation}
for $div(f)=\sum m_k\{p_k\}$.
\end{proposition}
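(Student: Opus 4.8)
The plan is to prove the stated inequality by reducing to the zero- and pole-free situation via the Blaschke factorization, which simultaneously recovers the explicit four-sum form of Proposition~\ref{jensen} as the special case of splitting the divisor into its four types of points. First I would invoke Proposition~\ref{factorization} to write $f = f_0 * B_1 * \cdots * B_r$ on a neighbourhood of $\overline{\B_\rho}$, where $f_0$ is regular and zero-free on $\overline{\B_\rho}$ and each $B_i$ is a $\rho$-Blaschke factor: either $B_{a_i,\rho}$, contributing a pole at $a_i\in\B_\rho$, or $B_{a_i,\rho}^{-*}$, contributing a simple zero at $a_i$ (cf.~Remark~\ref{zerobla}). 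The recursive construction in Proposition~\ref{factorization} arranges that the special points $a_i$ are exactly the zeros and poles of $f$ lying in $\B_\rho$, counted with multiplicity, i.e.\ the divisor points $p_k$ with $|p_k|<\rho$, while the auxiliary zero-free factors are absorbed into $f_0$.

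Next I would evaluate the two sides of the estimate on the loci where the slice product is well behaved. At the real point $0$ the slice product reduces to the ordinary product, so $\log|f(0)| = \log|f_0(0)| + \sum_i \log|B_i(0)|$; here I use $|B_{a,\rho}(0)| = \rho/|a|$ and $|B_{a,\rho}^{-*}(0)| = |a|/\rho$, so a pole-type factor contributes $+\log(\rho/|a_i|)$ and a zero-type factor $-\log(\rho/|a_i|)$. On the boundary $\partial\B_\rho$ each $|B_i|\equiv 1$ by Theorem~\ref{borderbla}, so the Lemma preceding Proposition~\ref{factorization}, applied iteratively, gives $|f| = |f_0|$ on $\partial\B_\rho$, whence the boundary integral of $\log|f|$ equals that of $\log|f_0|$. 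Applying Proposition~\ref{jensen-ni-ni} to the zero-free regular $f_0$ yields $\log|f_0(0)| \le \frac{1}{2\pi}\int_0^{2\pi}\int_\S \log|f_0|\,d\mu\,d\theta$, and combining the three displays produces
\[
\log|f(0)| \le \frac{1}{2\pi}\int_0^{2\pi}\int_\S \log|f(\rho\cos\theta+\rho\sin\theta I)|\,d\mu(I)\,d\theta + \sum_i \log|B_i(0)|.
\]

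It remains to identify $\sum_i \log|B_i(0)|$ with $-\sum_{|p_k|<\rho} m_k\log(\rho/|p_k|)$, and this matching is where the real care lies. I would compare the Blaschke factors against the definition of $div(f)$ from Section~\ref{sect-div}: a punctual zero (real or non-real) supplies a single factor $B_{a,\rho}^{-*}$ and divisor multiplicity $m=+1$; a spherical zero $\S_a$ corresponds to the conjugate pair $B_{a,\rho}^{-*},B_{a^c,\rho}^{-*}$, of equal modulus $|a|=|a^c|$ and combined divisor multiplicity $m=+2$; poles are treated symmetrically with the opposite sign, and a non-real punctual pole is accommodated automatically by its factor $B_{a,\rho}$ with $m=-1$. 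Since for a divisor point $p_k=x_k+iy_k\in D^+$ the complex modulus $|p_k|$ equals the quaternionic modulus $|x_k+Iy_k|$ of every point of the associated sphere, each factor's contribution $\mp\log(\rho/|a_i|)$ reproduces the term $-m_k\log(\rho/|p_k|)$ exactly. The main obstacle is precisely this bookkeeping — keeping the factor $2$ for spherical zeros and poles correct and checking that the pairing of Blaschke factors with divisor points in $\B_\rho$ is exhaustive — rather than any analytic difficulty. Once it is settled the displayed inequality is exactly \eqref{jensenNreg}, and expanding the divisor sum over its punctual and spherical, zero and pole contributions recovers the explicit form of Proposition~\ref{jensen}.
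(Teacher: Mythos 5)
Your proposal is correct and follows essentially the same route as the paper's own proof: factor $f=f_0*B_1*\cdots*B_r$ via Proposition~\ref{factorization}, evaluate at $0$ using the moduli of the Blaschke factors, use $|B_i|\equiv 1$ on $\partial\mathbb{B}_\rho$ to replace $|f|$ by $|f_0|$ in the boundary integral, and apply Proposition~\ref{jensen-ni-ni} to the zero-free factor $f_0$. The only difference is that you spell out the divisor bookkeeping (pairing of Blaschke factors with punctual and spherical divisor points, signs, and the factor $2$) which the paper compresses into the single identity $\sum_i\log|B_i(0)|=-\sum_i\epsilon_i\log\frac{\rho}{|p_i|}$; your accounting is consistent with the paper's.
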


\begin{proof}
If $f$ has neither zeros, nor poles, this is
Proposition~\ref{jensen-ni-ni}.

For the general case, we
consider
 $div(f)=\sum_k m_k\{p_k\}$.
First of all, since $\overline{\mathbb{B}_{\rho}}\subset\Omega$, 
it follows from Corollary~\ref{cor-id} that  
there are only finitely many $k$ with $m_k\ne 0$,
hence the sums in the statement are finite
and $\sum_k |m_k|<\infty$.

From Proposition~\ref{factorization} we deduce that $f$ may be represented
in the form
\[
f=f_0*B_1*\ldots *B_r
\]
where $f_0$ is regular on a neighborhood of $\bar\B_\rho$
with neither zeros nor poles and each $B_i$ equals 
$B_{p_i,\rho}^{\epsilon_i *}$ 
for some $p_i\in\B_\rho$ and $ \epsilon_i\in\{+1,-1\}$.

Now 
\[
\log|f(0)|=\log|f_0(0)| +\sum_i \log |B_i(0)|=
\log|f_0(0)| -\sum_i \epsilon_i\log \frac{\rho}{|p_i|} 
\]
On the other hand
\[
|f(q)|=|f_0(q)| \ \ \forall q\in\partial \B_\rho
\]
because $|B_i(q)|=1$ for all $i\in\{1,\ldots,r\}$ and all $q\in\partial \B_\rho$.

Furthermore,
\[
\log|f_0(0)|  \le 
\displaystyle\frac{1}{2\pi}\displaystyle \int_0^{2\pi} \int_{\S }\log|f_0(\rho\cos\theta +\rho\sin\theta I)|d\mu(I)d\theta 
\]
(with equality if $f$ is slice-preserving)
due to Proposition~\ref{jensen-ni-ni}.

Combining these facts, we obtain the assertion.
\end{proof}

\begin{remark}
For every semi-regular function $f,$ its symmetrization
$f^s=N(f)=f*f^c$ is slice-preserving.
Therefore:
\begin{equation}
\begin{array}{ccc}
\log|f^s(0)| 
&=
\displaystyle\frac{1}{2\pi }\displaystyle \int_0^{2\pi} \int_{\S }\log|f^s(\rho\cos\theta +\rho\sin\theta I)|d\mu(I)d\theta \, + \\
& -2{\displaystyle\sum_{|p_k|<\rho} m_k\log \frac{\rho}{|p_k|} }
\quad 
 \\
\end{array}
\end{equation}
for $div(f)=\sum m_k\{p_k\}$.

However, there is no similar formula for $Tr(f)=f+f^c$, because
$div(f+f^c)$ is not completely determined by $div(f)$ whereas
$div(f^s)=2\cdot div(f)$.
\end{remark}

\begin{definition}
Let $f$ be a slice regular function on $\mathbb{B}_R.$ For all $0 < r < R$ we define:
$$
M_f (r) = \sup\limits_{|q|=r} |f(q)|  \,\,;
$$
$$
P_f (r) = \textrm{number of punctual zeros of f with multiplicities} \,\, ;
$$
$$
S_f (r ) = \textrm{number of spherical zeros of f with multiplicities};
$$
\vskip0.3cm
$$
n_f(r)=P_f(r)+2S_f(r).
$$
\end{definition}

Then 
\[
n_f(r)=\sum_{|a_k|\le r} m_k
\]
for a regular function $f$ with divisor $div(f)=\sum m_k\{a_k\}$.

\begin{corollary}
Let $f$ be a slice regular function defined in a neighborhood of $\overline{\mathbb{B}_R}$ and such that $f(0) \neq 0.$ Then
\begin{equation}\label{eq2}
n_f(r)=P_f(r)+2S_f(r)
 \le \frac{\log M_{f} (R) -\log |f(0)| }{\log R - \log r}
\end{equation}
for any $0 <r <R.$
\end{corollary}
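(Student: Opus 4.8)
The plan is to derive this counting estimate directly from the Jensen's Formula of Proposition~\ref{jensen} (in its divisor form \eqref{jensenNreg}), by applying it at the radius $R$ and then bounding the sum over zeros from below using the monotonicity of $\log(\rho/|p_k|)$. Since $f$ is regular and defined near $\overline{\mathbb{B}_R}$, it has no poles, so every $m_k>0$ in $div(f)=\sum m_k\{p_k\}$, and the Jensen inequality at radius $R$ reads
\[
\log|f(0)|\le \frac{1}{2\pi}\int_0^{2\pi}\int_{\S}\log|f(R\cos\theta+R\sin\theta\, I)|\,d\mu(I)\,d\theta -\sum_{|p_k|<R} m_k\log\frac{R}{|p_k|}.
\]

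First I would bound the integral term from above by $\log M_f(R)$: since $|f(q)|\le M_f(R)$ for all $q$ with $|q|=R$, and $\log$ is monotone, the double integral of $\log|f|$ against a probability measure is at most $\log M_f(R)$. This yields
\[
\sum_{|p_k|<R} m_k\log\frac{R}{|p_k|}\le \log M_f(R)-\log|f(0)|.
\]
Next, the key step is to discard the zeros in the annulus $r\le|p_k|<R$ (all terms are nonnegative since $|p_k|<R$) and to bound the remaining terms from below. For each zero with $|p_k|\le r$ we have $\log(R/|p_k|)\ge \log(R/r)$, so
\[
\sum_{|p_k|<R} m_k\log\frac{R}{|p_k|}\ge \sum_{|p_k|\le r} m_k\log\frac{R}{|p_k|}\ge \Big(\sum_{|p_k|\le r} m_k\Big)\log\frac{R}{r}=n_f(r)\,\log\frac{R}{r},
\]
using the identity $n_f(r)=\sum_{|p_k|\le r}m_k$ recorded just before the corollary.

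Combining the two displays gives $n_f(r)\log(R/r)\le \log M_f(R)-\log|f(0)|$, and dividing by $\log R-\log r=\log(R/r)>0$ yields the claimed inequality \eqref{eq2}. The only point requiring care—and the main (mild) obstacle—is the sign bookkeeping: one must verify that all the dropped terms are genuinely nonnegative (which holds because $f$ has no poles, so every $m_k>0$, and every retained index satisfies $|p_k|<R$), and that the hypothesis $f(0)\ne 0$ guarantees $\log|f(0)|$ is finite so that the division is legitimate. No further analytic input is needed beyond Proposition~\ref{jensen}.
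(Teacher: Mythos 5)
Your argument is essentially the paper's own proof: both apply the divisor form of Jensen's Formula at radius $R$, bound the integral by $\log M_f(R)$, and then estimate the zero sum from below by discarding the zeros in the annulus and using $\log(R/|p_k|)\ge\log(R/r)$ for $|p_k|\le r$ (the paper writes this same estimate out via a longer chain of equalities, but it is the identical inequality), so the two proofs agree in substance.

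There is one small gap you should close. Proposition~\ref{jensen} is stated under the hypothesis that $f(y)\ne 0$ for all $y\in\partial\mathbb{B}_{\rho}$, and the corollary's hypotheses do not guarantee that $f$ is zero-free on $\partial\mathbb{B}_R$; as written, your first step (invoking Jensen at radius exactly $R$) is therefore not licensed in general. The paper handles this by first establishing the inequality
\[
\log M_f(R) - \log|f(0)| \ \ge\ n_f(r)\bigl(\log R - \log r\bigr)
\]
for those radii $R$ at which $f$ has no boundary zeros, and then extending to all $R$ by continuity in $R$ (equivalently, one can apply Jensen at radii $R'<R$ with no zeros on $\partial\mathbb{B}_{R'}$ and let $R'\uparrow R$, using that $M_f$ and $n_f(r)$ behave monotonically). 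Everything else in your proposal — the positivity of all $m_k$ because $f$ is regular and hence pole-free, the bound on the integral against a probability measure, and the identity $n_f(r)=\sum_{|p_k|\le r}m_k$ — is correct and matches the paper.
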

\begin{proof}
First we observe that
\[
\log M_f(R)\ge
\displaystyle\frac{1}{2\pi }\displaystyle \int_0^{2\pi} \int_{\S }\log|f(R\cos\theta +R\sin\theta I)|d\mu(I)d\theta.
\]
Therefore the Jensen's inequality \eqref{jensenNreg} implies:
\begin{align*}
\log M_f(R) - \log|f(0)| &\ge
\sum_{|p_k|<R} m_k\log \frac{R}{|p_k|}\\
&= \left( \sum_{|p_k|<R} m_k\log R \right)
- \sum_{|p_k|<R} m_k\log|p_k|\\
&= n_f(R)\log R - \sum_{|p_k|<R} m_k\log|p_k|\\
&= n_f(R)\log R - \sum_{|p_k|\le r} m_k\log|p_k|
   -\sum_{r<|p_k|<R} m_k\log|p_k|\\
&\ge n_f(R)\log R - \sum_{|p_k|\le r} m_k\log r
   -\sum_{r<|p_k|<R} m_k\log R\\
&= n_f(R)\log R - n_f(r)\log r
   -(n_f(R)-n_f(r))\log R\\
&= n_f(r)\left( \log R - \log r \right).\\
\end{align*}
Thus 
\begin{equation}\label{eq42}
\log M_f(R) - \log|f(0)| \ge  n_f(r)\left( \log R - \log r \right)
\end{equation}
for all $0<r<R$ such that $f$ has no zeros on $\partial\B_R$.
By continuity (in $R$) it follows that \eqref{eq42} holds
without any assumption whether there are zeros on $\partial\B_R$
or not.

This yields the assertion.
\end{proof}

\begin{corollary}
Let $f:\B_1\to\B_1$ be a regular function with $f(0)\ne 0$.

Then there is no zero of $f$ in $\B_r$ for any $r<|f(0)|$.
\end{corollary}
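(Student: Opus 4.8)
The plan is to derive the bound on the number of zeros directly from the Jensen-type inequality \eqref{eq2} established in the preceding corollary. The statement to prove is that a regular function $f:\B_1\to\B_1$ with $f(0)\neq 0$ has no zero inside $\B_r$ whenever $r<|f(0)|$. My strategy is to show the contrapositive-flavored estimate: if there were a zero in $\B_r$, the counting function $n_f(r)$ would be at least $1$, and I will show this forces a contradiction with the hypothesis that $f$ maps the unit ball into itself.

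First I would fix $R<1$ and apply the inequality
\[
n_f(r)\le \frac{\log M_f(R)-\log|f(0)|}{\log R-\log r}
\]
from \eqref{eq2}. Since $f(\B_1)\subset\B_1$, every value $|f(q)|<1$ for $|q|<1$, so $M_f(R)=\sup_{|q|=R}|f(q)|\le 1$ and therefore $\log M_f(R)\le 0$. Hence the numerator is bounded above by $-\log|f(0)|=\log\frac1{|f(0)|}$, giving
\[
n_f(r)\le \frac{\log\frac1{|f(0)|}}{\log\frac{R}{r}}=\frac{-\log|f(0)|}{\log R-\log r}.
\]
Now I let $R\to 1^-$; the numerator is unchanged while the denominator increases to $-\log r=\log\frac1r$, so in the limit
\[
n_f(r)\le \frac{\log\frac1{|f(0)|}}{\log\frac1r}=\frac{\log|f(0)|}{\log r}.
\]
Both $\log|f(0)|$ and $\log r$ are negative (as $|f(0)|<1$ and $r<1$), so the right-hand side is positive; the key point is that it is strictly less than $1$ precisely when $r<|f(0)|$, since then $\log\frac1r<\log\frac1{|f(0)|}$ fails and instead $|\log r|<|\log|f(0)||$, making the quotient less than one.

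Since $n_f(r)=P_f(r)+2S_f(r)$ is a nonnegative integer counting zeros with multiplicity, the bound $n_f(r)<1$ forces $n_f(r)=0$, i.e.\ $f$ has no zeros in $\overline{\B_r}$. \textbf{The main obstacle} I anticipate is the careful handling of the limit $R\to 1^-$: strictly speaking \eqref{eq2} requires $f$ to be defined in a neighborhood of $\overline{\B_R}$ and that we may take $R$ arbitrarily close to $1$, so I would justify that the inequality persists in the limit by continuity of $n_f$ as a right-continuous step function, together with the fact that $M_f(R)\le 1$ uniformly. Once this limiting step is secured, the integrality of $n_f(r)$ delivers the conclusion immediately.
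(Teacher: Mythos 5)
Your argument is correct and follows essentially the same route as the paper: assume a zero exists so that $n_f(r)\ge 1$, bound $\log M_f(R)\le 0$ using $f(\B_1)\subset\B_1$, let $R\to 1^-$ in \eqref{eq2}, and conclude $n_f(r)\le \frac{-\log|f(0)|}{-\log r}<1$, a contradiction with integrality. One slip in your justification of the last inequality: since $r<|f(0)|<1$ you have $|\log r|>|\log|f(0)||$ (not $<$ as written), and it is precisely this that makes the quotient $\frac{|\log|f(0)||}{|\log r|}$ strictly less than $1$.
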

\begin{proof}
Assume the contrary. Then $n_f(r)\ge 1$ while
\[
\lim_{R\to 1} \frac{\log M_{f} (R) -\log |f(0)| }{\log R - \log r}
\le \frac{-\log|f(0)|}{-\log r}<1
\]
leading to a contradiction.
\end{proof}
The interested reader can find in \cite{AB} and in \cite{perotti} other results about Jensen's Formula but in a slightly different context.

\section*{Acknowledgements}
The two authors were partially supported by GNSAGA of INdAM and by INdAM itself.
C. Bisi was also partially supported by PRIN \textit{Variet\'a reali e complesse:
geometria, topologia e analisi armonica}. The two authors would like also to thank the anonymous referees for their insightful comments.

\end{document}